\documentclass[11pt,a4paper,reqno]{amsart}
\usepackage{epsfig,amssymb,latexsym,amscd,amsmath,amsthm,stmaryrd, bussproofs}
\usepackage{verbatim}
\usepackage[all,2cell]{xy}
\xyoption{v2}
\UseAllTwocells
\usepackage{draftwatermark}
\SetWatermarkLightness{0.95}
\SetWatermarkFontSize{5cm}

\setlength{\oddsidemargin}{-1.04cm}
\setlength{\evensidemargin}{-1.04cm}
\textwidth 18cm
\columnsep 1cm
\setlength{\topmargin}{-1.54cm}
\textheight 23cm
\headheight .3in

\theoremstyle{plain}

\newtheorem{teo}{Theorem}[section]
\newtheorem{theo}[teo]{Theorem}
\newtheorem{coro}[teo]{Corollary}
\newtheorem{lema}[teo]{Lemma}

\theoremstyle{remark}

\theoremstyle{definition}

\newtheorem{defi}[teo]{Definition}
\newtheorem{obse}[teo]{Observation}

\newtheorem{question}[teo]{Question}

%%%%%%%%%%%%%%%%%%Packages%%%%%%%%%%%%%%%%%%%%%%%%%%%%%%%%%%%%%%%%%%%%%%%
\usepackage[bbgreekl]{mathbbol}
\usepackage[sans]{dsfont}
\usepackage{yfonts}
\usepackage{stmaryrd}
\usepackage{euscript}
\usepackage{mathrsfs}
\usepackage{txfonts}
\usepackage[all]{xy}
%\usepackage[all]{draftcopy}
%\usepackage[draft]{graphicx}
%\usepackage{rotating} 
%\usepackage[sans]{dsfonts}
%\usepackage[bbgreekl]{mathbbol}

%%%%%%%%%%%%%%%%%%%%%%%%%%%%%%%COMANDS%%%%%%%%%%%%%%%%%%%%%%%%%%%%%%%%%%%
%Macros

% D'EDITION

% AVEC MATHCAL OU MATHBB

%\newcommand{\Perp}{\protect\mathpalette{\protect\independenT}{\perp}}
%\def\independenT#1#2{\mathrel{\rlap{$#1#2$}\mkern2mu{#1#2}}}
\renewcommand{\Perp}{\bot\!\!\!\bot}

\newcommand{\N}{\mathds{N}}

%AVEC LETTRES GRECS

%OPERATEURS

\renewcommand{\int}{\operatorname{int}}

%LOGIQUES
\newcommand{\ssi}{\Leftrightarrow} 
\newcommand{\ssucc}{\succ~\!\!\!\!\succ}

\newcommand{\sqleq}[2]{{#1} \sqsubseteq {#2}}
%MATHEMATIQUES
%\newcommand{\bar}[1]{\overline{x}}
%\renewcommand{\star}{\ding{86}}

\newcommand{\koca}{\ensuremath{\mathcal{{}^KOCA}}}
%\numberwithin{equation}{theorem}
%%%%%%%%%%%%%%%%%%%%%%%%%%%%%%%%%%%%%%%%%%%%%%%%%%%%%%%%%%%%%%%%%%%%%%%%%

%%%%%%%%%%%%%%%%%%%%%%%%%%%%%%%PSTRICKS%%%%%%%%%%%%%%%%%%%%%%%%%%%%%%%%%%

%%%%%%%%%%%%%%%%%%%%%%%%%THEOREMS%%%%%%%%%%%%%%%%%%%%%%%%%%%%%%%%%%%%%%%%
%\newcounter{chapter}
%\theoremstyle{break} 
%\newtheorem{thm}{Theorem}[section]
%\newtheorem{prop}[thm]{Proposition}
%\newtheorem{lm}[thm]{Lemma}
%\newtheorem{cor}[thm]{Corollary}
%\newtheorem{df}[thm]{Definition}
%\newtheorem{rk}[thm]{Remark}
%\newtheorem{ex}[thm]{Example}
%\newtheorem{notation}[thm]{Notation}

%%%%%%%%%%%%%%%%%%%%%%%%%%%%%%%%%%%%%%%%%%%%%%%%%%%%%%%%%%%%%%%%%%%%%%%%%

\begin{document} 
\begin{abstract} Besides recalling the basic definitions of
  Realizability Lattices, Abstract Krivine Structures, Ordered
  Combinatory Algebras and Tripos and reviewing its relationships, we
  propose a new foundational framework for realizability. Motivated by
  Streicher's paper \emph{Krivine's Classical Realizability from a
    Categorical Perspective} \cite{kn:streicher}, we define the concept
  of \emph{Krivine's Ordered Combinatory Algebras} (\koca) as a common
  platform that is strong enough to do both: categorical and computational
  semantics. The $\mathcal{OCA}$s produced by Streicher from
  $\mathcal{AKS}$s in \cite{kn:streicher} are particular cases of
  \koca s. %This construction is conservative in the sense that the
  %realizers of a formula $F$ in the $\mathcal{AKS}$ 
  %become realizers of $F$ in the associated \koca.
\end{abstract}
\title[A report on realizability]
{A report on realizability}
\author{Walter Ferrer Santos}
\address{Facultad de Ciencias\\Universidad de la Rep\'ublica\\
Igu\'a 4225\\11400. Montevideo\\Uruguay\\}
\email{wrferrer@cmat.edu.uy}
\author{Mauricio Guillermo}
\address{Facultad de Ingenier\'ia\\Universidad de la
  Rep\'ublica\\ J. Herrera y Reissig 565 \\ 11300. Montevideo
  \\ Uruguay\\} 
\email{mguille@fing.edu.uy}
\author{Octavio Malherbe}
\address{Facultad de Ingenier\'ia\\Universidad de la
  Rep\'ublica\\ J. Herrera y Reissig 565 \\ 11300. Montevideo
  \\ Uruguay\\} 
\email{malherbe@fing.edu.uy}
\thanks{The  authors would like to thank Csic-UDELAR and Conicyt-MEC
  for their partial support}
\today

\maketitle

\section{Introduction} 

In this report we revisit the important construction presented in the
paper: \emph{Krivine's Classical Realizability from a Categorical
  Perspective} by Thomas Streicher --see \cite{kn:streicher}--. 

As the results of Streicher's paper are the basis of our presentation
as well as of our contributions, we cite its Introduction in some
length. 

Thereat, the author states: \emph{In a sequence of papers
  \emph{(\cite{kn:kr2001};\cite{kn:kr2003}; \cite{kn:kr2009})}
  J.-L. Krivine has introduced his notion of Classical Realizability
  for classical second order logic and Zermelo-Fraenkel set
  theory. Moreover, in more recent work \emph{(\cite{kn:kr2008})} he
  has considered forcing constructions on top of it with the ultimate
  aim of providing a realizability interpretation for the axiom of
  choice. The aim of this paper is to show how Krivine's classical
  realizability can be understood as an instance of the categorical
  approach to realizability as started by Martin Hyland in
  \emph{(\cite{kn:hyland})} and described in detail in
  \emph{(\cite{kn:vanOosbook})}}. 

Later he mentions that the main purpose of his construction, is to:
(c.f. \cite{kn:streicher}) \emph{Introduce a notion of “abstract
  Krivine structure” (aks) and show how to construct a classical
  realizability model for each such aks \emph{[$\cdots$ and]} show how
  any aks $A$ gives rise to an order combinatory algebra (oca) with a
  filter of distinguished truth values which induces a tripos (see
  \emph{(\cite{kn:vanOosbook}; \cite{kn:hofstra2006})} for explanation
  of these notions) which also gives rise to a model of ZF}. 

In this report, in Sections \ref{section:two}, \ref{section:three} and
\ref{section:four}, we start with a recapitulation of the main
constructions of Streicher introducing the concept of $\mathcal {AKS}$
--the Abstract Krivine structures mentioned before-- in a modular step
by step manner, that we hope makes the subject easier to digest.  

In Sections \ref{section:five}, \ref{section:six} and
\ref{section:seven}, besides recalling the definition of combinatory
algebra and ordered combinatory algebra, we introduce the notion of
adjunctor, that is an element $\operatorname{e}$ of the algebra that
(if $\circ$ is the application and $\rightarrow$ the implication of
the algebra) guarantees that: for all $a,b,c \in A$, if $a\circ b \leq
c$, then $\operatorname{e}\circ\ a \leq (b \rightarrow c)$.  We also
show that an Abstract Krivine Structure in the sense of
\cite{kn:streicher}, produces an ordered combinatory algebra with
application, implication and adjunctor. 

In Section \ref{section:eight}, we show that --with the addition of a
completeness condition with respect to the $\operatorname{inf}$ of
arbitrary subsets to the ordered combinatory algebras considered
above-- we can induce a tripos \emph{directly} from the algebra, with
no need to first walk back to the --a priori richer-- abstract Krivine
structure.  

In Section \ref{section:nine}, we show that we can define
Realizability for high order languages in the class of
$\mathcal{OCA}$s considered in the above 
section. In particular this means that we can define Realizability for
high order arithmetics. %Moreover, in the case that
%the $\mathcal{OCA}$ comes from an $\mathcal{AKS}$ as in Section
%\ref{section:seven}, for a given formula $F$, the realizers of $F$ in
%the $\mathcal{AKS}$ can be 
%naturally translated into realizers of $F$ in the associated $\mathcal{OCA}$.
In conclusion in this set up we can do both 
semantics: computational and categorical.% without loosing of
%realizability information which is present in the original frame of
%$\mathcal{AKS}$s.  

We would like to thank Jonas Frey and Alexandre Miquel, for sharing
with us their deep expertise on the subject, when visiting Uruguay in
2013.  

In a joint paper that is currently in preparation, more thorough
results of this collaboration will be presented.       

\section{A basic set theoretical construction: Realizability Lattices.}
\label{section:two}
\newcounter{xcounter}
\begin{list}{\Large{\bf{\arabic{xcounter}.}}}{\usecounter{xcounter}}
\item We consider the following set theoretical data. 
\begin{defi} A realizability lattice --abbreviated as $\mathcal
  {RL}$-- is a triple $(\Lambda,\Pi,\Perp)$ where $\Lambda$ and $\Pi$
  are sets and $\Perp \subseteq\Lambda \times \Pi$ is a subset. The
  elements of $\Lambda$ are called \emph{terms} and the elements of
  $\Pi$ are called \emph{stacks}. 
\begin{enumerate}
\item If $t \star \pi \in \Perp$, we write that $t \perp \pi$ and say that $t$ is perpendicular to $\pi$ or that $t$ 
realizes $\{\pi\}$.
\item Given $P \subseteq\Pi$ and $L \subseteq\Lambda$, we
  define \[{}^\perp P=\{t \in \Lambda: t \perp \pi\,,\, \forall \pi
  \in P\}\subseteq\Lambda \quad,\quad L^\perp=\{\pi \in \Pi: t \perp
  \pi\,,\, \forall t \in L\} \subseteq\Pi.\]  
\item If $t \in {}^\perp P$, we say that $t$ realizes $P$ and write $t
  \models P$. In other words $t$ realizes $P$ if and only if $t \perp
  \pi$ for all $\pi \in P$.   
\end{enumerate}
\end{defi}
\item The following definitions can be established for an $\mathcal{RL}$. 
\begin{defi} Given $(\Lambda,\Pi,\Perp)$ an $\mathcal{RL}$, 
we define a pair of maps:
\begin{align*}
(\quad)^{\perp}:\mathcal P(\Lambda)&\xrightarrow{\hspace*{3cm}} \mathcal P(\Pi)\\
\Lambda \supseteq L &\xrightarrow{\hspace*{1.5cm}} L^{\perp}=\{\pi \in
\Pi|\,\, \forall t \in L, t \star \pi \in \Perp\}=\{\pi \in \Pi|\,\, L
\times \{\pi\} \subseteq \Perp\}\subseteq \Pi; 
\end{align*}
\begin{align*}
{}^{\perp}(\quad):\mathcal P(\Pi)&\xrightarrow{\hspace*{3cm}} \mathcal P(\Lambda)\\
\Pi \supseteq P &\xrightarrow{\hspace*{1.5cm}} {}^{\perp}P=\{t \in
\Lambda|\,\, \forall \pi \in P, t \star \pi \in \Perp\}=\{t \in
\Lambda|\,\, \{t\} \times P \subseteq \Perp\}\subseteq \Lambda. 
\end{align*}
The pairs of $\Lambda\times\Pi$ are called \emph{processes} and it is
customary to denote the
process $(t, \pi)$ as $t\star\pi$. 
\end{defi} 
%The word {\emph lattice} is used to make reference to the set
%theoretical lattices $(\Lambda, \bigcap, \bigcup, \emptyset, \Lambda)$
%and $(\Pi, \bigcap, \bigcup, \emptyset, \Lambda)$. 
\begin{obse}\label{obse:initialrl}In the notations above for an $\mathcal{RL}$ one has that:
\begin{enumerate}
\item The maps $L \rightarrow L^{\perp}$ and $P \rightarrow {}^{\perp}P$ are antimonotone with respect to the order given by the inclusion of sets and ${}^{\perp}\emptyset=\Lambda$ and $\emptyset{}^{\perp}=\Pi$. 
\item Let us consider the behaviour of the operators $(\quad)^{\perp}$ and ${}^{\perp}(\quad)$ with respect to the lattice structure of the domain and codomain. We have that for $P_i \subseteq\Pi, {i \in I}$ and $L_i \subseteq\Lambda, {i \in I}$: 
 \[{}^{\perp}\big(\bigcap_{i \in I} P_i\big) \supseteq \bigcup_{i \in I} {}^{\perp}P_i\,,\,{}^{\perp}\big(\bigcup_{i \in I} P_i\big) = \bigcap_{i \in I} {}^{\perp}P_i;\]\[\big(\bigcap_{i \in I} L_i\big)^{\perp} \supseteq \bigcup_{i \in I} L_i^{\perp}\,,\,\big(\bigcup_{i \in I} L_i\big)^{\perp} = \bigcap_{i \in I} L_i^{\perp}.\]  
\item For an arbitrary $L \in \mathcal P(\Lambda)$ and $P \in \mathcal P(\Pi)$, one has that ${}^{\perp}(L^{\perp}) \supseteq L$ and $({}^{\perp}P)^{\perp} \supseteq P$.
\item \label{item:perpnonempty} One has that $({}^{\perp}\Pi)^{\perp}= \Pi$ and ${}^{\perp}(\Lambda^{\perp})= \Lambda$. Notice that in general it may happen that ${}^\perp \Pi \neq \emptyset$ or $\Lambda^\perp \neq \emptyset$ --see later Observation \ref{obse:weakercond},\eqref{item:nonempty}. 
\item For an arbitrary $L \in \mathcal P(\Lambda)$ and $P \in \mathcal P(\Pi)$, one has that $({}^\perp(L^{\perp}))^{\perp} = L^{\perp}$ and ${}^{\perp}(({}^\perp P)^\perp) = {}^{\perp}P$.  
\end{enumerate}
\begin{proof}
The proof of the first four properties is immediate. For the fifth one, applying the $\perp$ operator in ${}^{\perp}(L^{\perp}) \supseteq L$ we obtain that  $({}^\perp(L^{\perp}))^{\perp} \subseteq  L^{\perp}$and substituting in the inequality $({}^{\perp}P)^{\perp} \supseteq P$, the subset $P$ by  $L^{\perp}$ we obtain the reverse inclusion. Similarly for subsets $P \subseteq \Pi$. 
\end{proof}
\end{obse}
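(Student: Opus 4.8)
The plan is to recognize the pair $\big((\ )^\perp,\,{}^\perp(\ )\big)$ as the polarity (contravariant Galois connection) induced by the relation $\Perp$ between $\mathcal P(\Lambda)$ and $\mathcal P(\Pi)$ ordered by inclusion, so that each of the five assertions becomes an instance of a standard fact whose verification is just a matter of unwinding the defining conditions. First, for the antimonotonicity in the first item: if $L \subseteq L'$, then any $\pi$ perpendicular to all of $L'$ is \emph{a fortiori} perpendicular to all of $L$, hence $L'^\perp \subseteq L^\perp$, and symmetrically for $P \mapsto {}^\perp P$; the boundary equalities ${}^\perp\emptyset = \Lambda$ and $\emptyset^\perp = \Pi$ hold because the universally quantified condition ``$t \perp \pi$ for all $\pi$ in the empty set'' is vacuously true (and dually). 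For the second item I would unwind membership: $t \in {}^\perp\!\big(\bigcup_i P_i\big)$ says exactly that $t \perp \pi$ for every $\pi$ lying in some $P_i$, which is the conjunction over $i$ of $t \in {}^\perp P_i$, giving the equality for unions; for intersections one only obtains $\supseteq$, and this already follows from antimonotonicity, since $\bigcap_i P_i \subseteq P_j$ forces ${}^\perp(\bigcap_i P_i) \supseteq {}^\perp P_j$ for each $j$, whence ${}^\perp(\bigcap_i P_i) \supseteq \bigcup_j {}^\perp P_j$. The statements about subsets of $\Lambda$ are the mirror images.

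For the third item (the closure/unit inequalities), take $t \in L$; to see $t \in {}^\perp(L^\perp)$ one checks $t \perp \pi$ for all $\pi \in L^\perp$, but $\pi \in L^\perp$ means $t' \perp \pi$ for all $t' \in L$, and specializing $t' := t$ settles it; the inclusion $P \subseteq ({}^\perp P)^\perp$ is proved identically. The fourth item is then immediate: $({}^\perp\Pi)^\perp \subseteq \Pi$ and ${}^\perp(\Lambda^\perp) \subseteq \Lambda$ hold trivially because the left-hand sides are, by definition, subsets of $\Pi$ and of $\Lambda$ respectively, while the reverse inclusions are the special cases $P := \Pi$ and $L := \Lambda$ of the third item.

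The fifth item is the only place where two facts must be combined — and even there the issue is bookkeeping, not a genuine obstacle. Applying the antimonotone operator $(\ )^\perp$ to the inclusion $L \subseteq {}^\perp(L^\perp)$ of the third item gives $\big({}^\perp(L^\perp)\big)^\perp \subseteq L^\perp$, while instantiating the third item at the subset $L^\perp \subseteq \Pi$ (in the role of $P$) gives $L^\perp \subseteq \big({}^\perp(L^\perp)\big)^\perp$; the two together yield the claimed equality, and ${}^\perp\big(({}^\perp P)^\perp\big) = {}^\perp P$ follows by the symmetric argument. The only subtleties worth flagging are to resist over-claiming in the second item — unions are sent to intersections exactly, but intersections are sent to unions only up to the one-sided inclusion forced by antimonotonicity — and, in the fifth, to remember that $(\ )^\perp$ \emph{reverses} inclusions when it is applied to the closure inequality of the third item.
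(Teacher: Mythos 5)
Your proof is correct and follows essentially the same route as the paper: the first four items are direct unwindings of the definitions (which the paper simply calls immediate), and for the fifth you combine antimonotonicity applied to $L \subseteq {}^\perp(L^{\perp})$ with the instantiation of the closure inclusion at $P := L^{\perp}$, which is exactly the paper's argument. Nothing is missing.
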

\smallskip
\item In the above context, the following definition is natural.
\begin{defi} In the situation that we have an $\mathcal{RL}$ as above, we define the following sets:
\begin{align*} \mathcal P_{\perp}(\Lambda)&=\{L \subseteq \Lambda|\, {}^\perp(L^{\perp}) = L\} \subseteq \mathcal P(\Lambda),\\
\mathcal P_{\perp}(\Pi)&=\{P \subseteq \Pi|\, ({}^{\perp}P)^{\perp} = P\} \subseteq\mathcal P(\Pi)
\end{align*}
\end{defi}
Notice that the only relevant structure at this point is the
\emph{lattice} structure in the sets $P_{\perp}(\Lambda)$ and
$P_{\perp}(\Pi)$, where we take the (set theoretical) inclusion as the order and
as ``meet'' and ``join'' the intersection and union respectively
%on the family of
%subsets of a set 
follwed by taking double perpendicularity.  
\begin{lema} In the above context of an $\mathcal {RL}$ the maps 
$(\quad)^{\perp}:\mathcal P(\Lambda)\rightarrow \mathcal P(\Pi)$ and 
${}^{\perp}(\quad):\mathcal P(\Pi)\rightarrow \mathcal P(\Lambda)$
  when restricted respectively to $\mathcal P_{\perp}(\Lambda)$ and
  $\mathcal P_{\perp}(\Pi)$ are order reversing isomorphisms inverse
  of each other. Moreover with respect to the order given by the
  inclusion, $\Lambda^\perp$ and $\Pi$\emph{;} ${}^\perp\Pi $ and
  $\Lambda $ are the minimal and maximal elements of $\mathcal
  P_{\perp}(\Pi)$ and $\mathcal P_{\perp}(\Lambda)$ respectively.  
\end{lema}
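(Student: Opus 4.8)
The plan is to recognize the pair $\big((-)^{\perp},\,{}^{\perp}(-)\big)$ as an antitone Galois connection between the complete lattices $\mathcal P(\Lambda)$ and $\mathcal P(\Pi)$ (both ordered by inclusion) and to run the standard argument that identifies the ``closed'' elements on which such a connection cuts down to a pair of mutually inverse antitone bijections. Everything needed has essentially been recorded in Observation \ref{obse:initialrl}; the proof is a matter of assembling the right items.

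First I would check that the two operators restrict correctly, i.e.\ that $(-)^{\perp}$ sends $\mathcal P_{\perp}(\Lambda)$ into $\mathcal P_{\perp}(\Pi)$ and ${}^{\perp}(-)$ sends $\mathcal P_{\perp}(\Pi)$ into $\mathcal P_{\perp}(\Lambda)$. If $L\in\mathcal P_{\perp}(\Lambda)$, then part (5) of Observation \ref{obse:initialrl} gives $({}^{\perp}(L^{\perp}))^{\perp}=L^{\perp}$, which is exactly the assertion $L^{\perp}\in\mathcal P_{\perp}(\Pi)$; the symmetric computation handles ${}^{\perp}(-)$. (In fact, since every $L\in\mathcal P_{\perp}(\Lambda)$ equals ${}^{\perp}(L^{\perp})$ and every $P\in\mathcal P_{\perp}(\Pi)$ equals $({}^{\perp}P)^{\perp}$, and conversely ${}^{\perp}P$ and $L^{\perp}$ are always closed by part (5), the sets $\mathcal P_{\perp}(\Lambda)$ and $\mathcal P_{\perp}(\Pi)$ are precisely the images of the two operators.)

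Next, the two restricted maps are mutually inverse: this is immediate from the very definitions, since ${}^{\perp}(L^{\perp})=L$ for $L\in\mathcal P_{\perp}(\Lambda)$ and $({}^{\perp}P)^{\perp}=P$ for $P\in\mathcal P_{\perp}(\Pi)$. Hence each restricted map is a bijection, and each is order reversing because antimonotonicity (part (1) of Observation \ref{obse:initialrl}) is inherited by restrictions. This yields the order-reversing isomorphism. For the extremal elements, $\Lambda\in\mathcal P_{\perp}(\Lambda)$ and $\Pi\in\mathcal P_{\perp}(\Pi)$ by part (4), and being the top elements of $\mathcal P(\Lambda)$ and $\mathcal P(\Pi)$ they are a fortiori the maximal elements of the respective subposets. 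For the minimal elements, $\Lambda^{\perp}\in\mathcal P_{\perp}(\Pi)$ and ${}^{\perp}\Pi\in\mathcal P_{\perp}(\Lambda)$ (parts (4)--(5) applied to $L=\Lambda$, $P=\Pi$), and for any $P\in\mathcal P_{\perp}(\Pi)$ one has $P=({}^{\perp}P)^{\perp}\supseteq\Lambda^{\perp}$ because ${}^{\perp}P\subseteq\Lambda$ and $(-)^{\perp}$ is antimonotone; symmetrically ${}^{\perp}\Pi$ is the least element of $\mathcal P_{\perp}(\Lambda)$.

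I do not expect a genuine obstacle: every step is a one-line consequence of Observation \ref{obse:initialrl}. The only place that calls for a moment's care is the very first one, namely verifying that the operators really land \emph{inside} the fixed-point sets (not merely that they are defined on them), and this is precisely what part (5) of the Observation was set up to provide.
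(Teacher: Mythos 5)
Your proof is correct and follows essentially the same route as the paper: both treat the pair as an antitone Galois connection and read off the restriction, bijectivity and antimonotonicity from the items of Observation \ref{obse:initialrl}. You are in fact slightly more complete than the paper's own proof, since you spell out the argument for the extremal elements $\Lambda^{\perp}$, $\Pi$, ${}^{\perp}\Pi$, $\Lambda$, which the paper leaves implicit.
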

\begin{proof} This result follows immediately from the previous
  considerations and it is in fact a general result concerning a
  Galois connection. 

Indeed, it is clear that $\operatorname{Im}((\quad)^{\perp}) =
\mathcal P_{\perp}(\Pi)$ and $\operatorname{Im}({}^{\perp}(\quad)) =
\mathcal P_{\perp}(\Lambda)$.  

By the very definition of $\mathcal P_{\perp}(\Lambda)$ it is clear
that if we apply succesively the maps $(\quad)^{\perp}:\mathcal
P(\Lambda)\rightarrow \mathcal P(\Pi)$ and~${}^{\perp}(\quad):\mathcal
P(\Pi)\rightarrow \mathcal P(\Lambda)$ to $L \in \mathcal
P_{\perp}(\Lambda)$ we obtain again $L$.  
Similarly for $P \in \mathcal P_{\perp}(\Pi)$.  
\end{proof}

The following observation will be used repeatedly.

\begin{obse}\label{obse:repeatedly} The following results are valid in an $\mathcal {RL}$. Notice that the last three assertions need stronger hypothesis than the first.
\begin{enumerate}
\item If $L \in \mathcal P(\Lambda)$ and $P \in \mathcal P(\Pi)$, then $L \subseteq {}^\perp P$ if and only if $P \subseteq L^\perp$. 

\item If $L \in \mathcal P_\perp(\Lambda)$ and $P \in \mathcal P(\Pi)$, then $L^\perp \subseteq P$ implies that ${}^\perp P \subseteq L$.
\item If $L \in \mathcal P(\Lambda)$ and $P \in \mathcal P_\perp(\Pi)$, then ${}^\perp P \subseteq L$ implies that $L^\perp \subseteq P$.
\item If $L \in \mathcal P_\perp(\Lambda)$ and $P \in \mathcal P_\perp(\Pi)$, then $L^\perp \subseteq P$ if and only if ${}^\perp P \subseteq L$.
\end{enumerate} 

As in the general situation of a Galois connection, the above conditions
(1) and (4) can be read as  adjunction relations between the functors ${}^\perp(-)$ and $(-)^\perp$ in the adequate domain and codomain.  
\end{obse}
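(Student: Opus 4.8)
The plan is to reduce everything to two ingredients already recorded in Observation~\ref{obse:initialrl}: the antimonotonicity (with respect to inclusion) of the two operators $(-)^\perp\colon\mathcal P(\Lambda)\to\mathcal P(\Pi)$ and ${}^\perp(-)\colon\mathcal P(\Pi)\to\mathcal P(\Lambda)$, together with the defining closure identities ${}^\perp(L^\perp)=L$ for $L\in\mathcal P_\perp(\Lambda)$ and $({}^\perp P)^\perp=P$ for $P\in\mathcal P_\perp(\Pi)$.

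First I would dispatch (1) straight from the definitions, with no hypothesis on $L$ or $P$: both inclusions $L\subseteq{}^\perp P$ and $P\subseteq L^\perp$ unfold to the single symmetric statement that $L\times P\subseteq\Perp$, i.e.\ $t\star\pi\in\Perp$ for every $t\in L$ and every $\pi\in P$; hence they are equivalent. This is just the usual ``hom-set'' description of a Galois connection.

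For (2) I would begin with $L^\perp\subseteq P$, apply the antimonotone operator ${}^\perp(-)$ to obtain ${}^\perp P\subseteq{}^\perp(L^\perp)$, and then invoke $L\in\mathcal P_\perp(\Lambda)$ to rewrite ${}^\perp(L^\perp)$ as $L$, giving ${}^\perp P\subseteq L$. Item (3) is the mirror image: starting from ${}^\perp P\subseteq L$, apply $(-)^\perp$ to get $L^\perp\subseteq({}^\perp P)^\perp$, and use $P\in\mathcal P_\perp(\Pi)$ to replace $({}^\perp P)^\perp$ by $P$. Then (4) follows at once by combining (2) and (3): with $L$ and $P$ both closed, (2) supplies the implication $L^\perp\subseteq P\Rightarrow{}^\perp P\subseteq L$ and (3) supplies its converse, so the two inclusions are equivalent.

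I do not expect a genuine obstacle here; the only thing to watch is the bookkeeping — in each step one must apply the correct one of the two antimonotone operators and have available the correct one of the two closure identities under the hypothesis actually assumed (closedness of $L$ only in (2), of $P$ only in (3), of both in (4)). Alternatively the whole observation can be read off from the Lemma preceding it: the closed sets form an order-reversing bijective pair, and (1)--(4) are precisely the statements that the unit and counit of the Galois connection become equalities on closed objects. I would mention this conceptual reading but still give the short elementary verifications above.
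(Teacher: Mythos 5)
Your argument is correct and is exactly the reasoning the paper leaves implicit: Observation \ref{obse:repeatedly} is stated without a separate proof, being an immediate consequence of the antimonotonicity of $(-)^\perp$ and ${}^\perp(-)$ and the closure identities from Observation \ref{obse:initialrl}, which is precisely what you invoke. Your unfolding of (1) to $L\times P\subseteq\Perp$ and the derivations of (2)--(4) from the closure hypotheses are all accurate.
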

\section{The push map in a realizabilty lattice}
\label{section:three}
\item In this section we add what we call \emph{a push map} to a
  realizability lattice, with which we can add the first elements of a
  \emph{calculus} to our structure.  
\begin{defi} A map $(t,\pi) \mapsto t . \pi : \Lambda \times \Pi
  \rightarrow \Pi$ defined in a realizability lattice
  $(\Lambda,\Pi,\Perp)$, will be called a \emph{push} map and denoted
  as $\operatorname{push}(t,\pi)=t . \pi$. In that case we say that
  the realizability lattice is endowed with a push map.   
\end{defi} 

\begin{defi}\label{defi:pushconductor} For an $\mathcal{RL}$ with a push, for $L \subseteq \Lambda$ and $P \subseteq \Pi$ we define:
\[L \leadsto P=\{\pi \in \Pi: L.\pi \subseteq P\} \subseteq \Pi\quad \text{right conductor of $L$ into $P$}.\]  
\end{defi}

Notice that:
\[L \leadsto P=\bigcup \{Q \subseteq \Pi: L.Q \subseteq P\}.\]

We can use the push map in order to define a map:
\[(L,P)\mapsto L.P:\mathcal P(\Lambda) \times \mathcal P(\Pi) \rightarrow \mathcal P(\Pi),\] 
that combined with the operators $(\quad)^\perp$ and ${}^\perp(\quad)$ yields natural binary operations in $\mathcal P_{\perp}(\Lambda)$ and $\mathcal P_{\perp}(\Pi)$.

\begin{obse} We can interpret the maps in Definition \ref{defi:pushconductor} as follows. Consider $L \subseteq \Lambda$ and define $a_L,m_L:\mathcal P(\Pi) \rightarrow \mathcal P(\Pi)$ as $a_L(P)=L \leadsto P$ and $m_L(P)=L.P$\footnote{In principle, the maps defined above  are not internal maps in the corresponding $\mathcal P_\perp$s.}.  In this notation the following ``adjunction relations'' holds:  For all $P,Q \subseteq \Pi$:
\[m_L(Q) \subseteq P \Leftrightarrow Q \subseteq a_L(P).\]
\end{obse}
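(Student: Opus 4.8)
The plan is to unwind the definitions of $m_L$ and $a_L$ and observe that the claimed biconditional is literally the definition of the right conductor, packaged as a pointwise statement about stacks.

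First I would prove the forward direction: assume $m_L(Q) \subseteq P$, i.e.\ $L.Q \subseteq P$. I want to show $Q \subseteq a_L(P) = L \leadsto P = \{\pi \in \Pi : L.\pi \subseteq P\}$. So fix $\pi \in Q$. Then $L.\pi \subseteq L.Q$ (since pushing a fixed $L$ against a larger set of stacks yields a larger set), and $L.Q \subseteq P$ by hypothesis, so $L.\pi \subseteq P$, which is exactly the condition for $\pi \in L \leadsto P$. Hence $Q \subseteq a_L(P)$.

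Conversely, assume $Q \subseteq a_L(P)$, i.e.\ for every $\pi \in Q$ we have $L.\pi \subseteq P$. I want $m_L(Q) = L.Q \subseteq P$. But $L.Q = \bigcup_{\pi \in Q} L.\pi$ by the definition of the extension of the push map to subsets, and each term $L.\pi$ of this union is contained in $P$ by hypothesis, so the union is contained in $P$. Hence $m_L(Q) \subseteq P$, completing the equivalence. Alternatively one can cite the displayed identity $L \leadsto P = \bigcup\{Q \subseteq \Pi : L.Q \subseteq P\}$ immediately after Definition~\ref{defi:pushconductor}, which already encodes that $a_L(P)$ is the largest $Q$ with $m_L(Q) \subseteq P$; the biconditional then follows since $m_L$ is monotone.

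There is essentially no obstacle here: the only thing one must be careful about is the implicit convention that $L.P$ denotes $\bigcup_{t \in L,\,\pi \in P} t.\pi$ (equivalently $\bigcup_{\pi \in P} L.\pi$), and that $m_L$ is monotone in its argument --- both are immediate from the pointwise definition of the push map and the formation of unions. Once that convention is fixed, the proof is a one-line calculation in each direction. One may additionally remark, as the text does, that this is the familiar fact that a monotone map of the form ``union of fibres'' has a right adjoint given by the corresponding conductor, and that it is the order-theoretic shadow of the currying/uncurrying adjunction; no completeness or $\mathcal{P}_\perp$ structure is needed for this statement, which is why the footnote warns that $a_L$ and $m_L$ need not be internal to the $\mathcal{P}_\perp$'s.
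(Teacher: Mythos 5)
Your argument is correct and is precisely the routine unwinding of definitions that the paper leaves implicit (the Observation is stated there without proof, being regarded as immediate from $L.Q=\bigcup_{\pi\in Q}L.\pi$ and the definition of the conductor). Both directions are handled properly, and your remark that no $\mathcal P_\perp$ structure is needed matches the paper's footnote.
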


\begin{defi} \label{defi:maps}We define the following binary operations in $\mathcal P_\perp(\Pi)$.
Let $P,Q \in \mathcal P_{\perp}(\Pi)$:
\begin{enumerate}
\medskip
\medskip
\item \hspace*{1cm}\label{item:maps1} $P\circ Q=({}^\perp\{\pi \in \Pi: {}^\perp Q .\,\pi \subseteq P \})^\perp= ({}^\perp({}^\perp Q\leadsto P))^{\perp} \in \mathcal P_\perp(\Pi)$. 
\medskip
\medskip
\item \hspace*{1cm}\label{item:maps2} $P \rightarrow Q=
  ({}^\perp\operatorname{push}({}^\perp
  P,Q))^{\perp}=({}^\perp({}^\perp P \cdot Q))^{\perp} \in \mathcal
  P_{\perp}(\Pi)$. 
\end{enumerate}
\end{defi}
\begin{obse}
\begin{enumerate}
\item Observe that in accordance to the above Definition
  \ref{defi:maps}, \eqref{item:maps1}, we have that for $P,Q \in
  \mathcal P_\perp(\Pi)$: 
\[ P \subseteq ({}^\perp Q \cdot P)\circ Q.\footnote{Notice the slight
  abuse of notation in this formula commited by applying the $\circ$
  operation in a situation in which one of the sets is not invariant
  by double perpendicularity.}\] 

\item Notice that: $P\circ Q=({}^\perp\{\pi \in \Pi: {}^\perp P
  \subseteq {}^\perp({}^\perp Q .\,\pi)\})^{\perp}$. 

\item From the definition of $P \rightarrow Q$, we deduce that ${}^\perp(P \rightarrow Q)= {}^\perp({}^\perp P.Q)$.  
\end{enumerate}
\end{obse}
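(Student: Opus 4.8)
The three assertions are all consequences of the Galois-connection machinery already established in Observations \ref{obse:initialrl} and \ref{obse:repeatedly}, so the plan is to reduce each one to a single antimonotonicity step together with one of the idempotency identities, with essentially no computation.

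For (1), I would first unfold $({}^\perp Q\cdot P)\circ Q$ as $({}^\perp({}^\perp Q\leadsto({}^\perp Q\cdot P)))^{\perp}$ using Definition \ref{defi:maps}\eqref{item:maps1}. The crux is the elementary inclusion $P\subseteq{}^\perp Q\leadsto({}^\perp Q\cdot P)$: for any $\pi\in P$ one has ${}^\perp Q.\pi\subseteq{}^\perp Q\cdot P$ directly from the definition of the set $L\cdot P$, so $\pi$ lies in the right conductor. Now apply the operator ${}^\perp(\quad)$ and then $(\quad)^{\perp}$; both are antimonotone, so the composite $R\mapsto({}^\perp R)^{\perp}$ is monotone, and the above inclusion gives $({}^\perp P)^{\perp}\subseteq({}^\perp Q\cdot P)\circ Q$. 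Finally, $P\subseteq({}^\perp P)^{\perp}$ by Observation \ref{obse:initialrl}(3) (and in fact $({}^\perp P)^{\perp}=P$ since $P\in\mathcal P_\perp(\Pi)$), which yields $P\subseteq({}^\perp Q\cdot P)\circ Q$.

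For (2), I would use that $P\in\mathcal P_\perp(\Pi)$, hence $P=({}^\perp P)^{\perp}$, so that the defining set $\{\pi\in\Pi:{}^\perp Q.\pi\subseteq P\}$ equals $\{\pi\in\Pi:{}^\perp Q.\pi\subseteq({}^\perp P)^{\perp}\}$. Now invoke Observation \ref{obse:repeatedly}(1) with $L={}^\perp P\subseteq\Lambda$ and with the subset ${}^\perp Q.\pi\subseteq\Pi$ in place of $P$: the condition ${}^\perp Q.\pi\subseteq({}^\perp P)^{\perp}$ is equivalent to ${}^\perp P\subseteq{}^\perp({}^\perp Q.\pi)$. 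Thus the two defining sets coincide literally, and applying $({}^\perp(\quad))^{\perp}$ to both produces the stated alternative description of $P\circ Q$.

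For (3), I would unfold $P\rightarrow Q=({}^\perp({}^\perp P\cdot Q))^{\perp}$ from Definition \ref{defi:maps}\eqref{item:maps2} and apply ${}^\perp(\quad)$ to both sides. By the idempotency identity ${}^\perp(({}^\perp R)^{\perp})={}^\perp R$ of Observation \ref{obse:initialrl}(5), applied with $R={}^\perp P\cdot Q$, the right-hand side collapses to ${}^\perp({}^\perp P\cdot Q)$, which is exactly the claimed equality. None of the three steps poses a genuine obstacle; the only point demanding a little care is tracking which inclusions reverse under ${}^\perp(\quad)$ and $(\quad)^{\perp}$ in part (1), and applying the hypothesis $P\in\mathcal P_\perp(\Pi)$ precisely where the closure $({}^\perp P)^{\perp}=P$ is needed in parts (1) and (2).
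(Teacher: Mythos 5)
Your proof is correct: all three items reduce, exactly as you do, to the Galois-connection facts of Observations \ref{obse:initialrl} and \ref{obse:repeatedly} (the elementary inclusion $P\subseteq{}^\perp Q\leadsto({}^\perp Q\cdot P)$ plus monotonicity of the double-perpendicular for (1), the adjunction ${}^\perp Q.\pi\subseteq({}^\perp P)^\perp \Leftrightarrow {}^\perp P\subseteq{}^\perp({}^\perp Q.\pi)$ together with $P=({}^\perp P)^\perp$ for (2), and the identity ${}^\perp(({}^\perp R)^\perp)={}^\perp R$ for (3)). The paper states this observation without proof, treating it as an immediate consequence of Definition \ref{defi:maps}, and your argument is precisely the intended routine verification, with the hypothesis $P\in\mathcal P_\perp(\Pi)$ invoked exactly where it is needed.
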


\item From the above Definition \ref{defi:maps}, we can deduce a
  crucial ``half adjunction property'' relating the operations $\circ$
  and $\rightarrow$ in $\mathcal P_\perp(\Pi)$.  
\begin{theo}\label{theo:adjunction}\emph{[Half adjunction property]}
  Assume that $P,Q,R \in \mathcal P_{\perp}(\Pi)$. If $Q \rightarrow R
  \subseteq P$, then $R \subseteq P \circ Q$.  
\end{theo}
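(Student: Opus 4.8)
The plan is to unwind all the definitions and reduce the claim to a statement purely about the push map, the operator $\leadsto$, and the basic Galois-connection facts from Observations~\ref{obse:repeatedly} and the one following Definition~\ref{defi:pushconductor}. First I would rewrite the hypothesis $Q \rightarrow R \subseteq P$ using Definition~\ref{defi:maps}\eqref{item:maps2}, namely $Q \rightarrow R = ({}^\perp({}^\perp Q \cdot R))^\perp$. Since $P \in \mathcal P_\perp(\Pi)$ and $Q \rightarrow R \in \mathcal P_\perp(\Pi)$, by Observation~\ref{obse:repeatedly}(4) the inclusion $Q \rightarrow R \subseteq P$ is equivalent to ${}^\perp P \subseteq {}^\perp(Q \rightarrow R) = {}^\perp({}^\perp Q \cdot R)$, the last equality being part of the Observation right after Definition~\ref{defi:maps}. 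So the hypothesis becomes ${}^\perp P \subseteq {}^\perp({}^\perp Q \cdot R)$, i.e.\ every term realizing $P$ is perpendicular to every stack of the form $t.\pi$ with $t \in {}^\perp Q$ and $\pi \in R$.

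Next I would unwind the conclusion. By Definition~\ref{defi:maps}\eqref{item:maps1}, $P \circ Q = ({}^\perp({}^\perp Q \leadsto P))^\perp$, and this already lies in $\mathcal P_\perp(\Pi)$, so $R \subseteq P \circ Q$ is — again by Observation~\ref{obse:repeatedly}(1), taking $L = {}^\perp({}^\perp Q \leadsto P)$ — equivalent to $L \subseteq {}^\perp R$, i.e.\ to ${}^\perp({}^\perp Q \leadsto P) \subseteq {}^\perp R$. Using the Galois facts (Observation~\ref{obse:initialrl}), it suffices to show $R^\perp \subseteq \bigl({}^\perp({}^\perp Q \leadsto P)\bigr)^\perp = ({}^\perp Q \leadsto P)$ — here the last equality holds because $({}^\perp L)^\perp$ applied to any set followed by nothing returns the $\perp$-closure, but in fact we only need $({}^\perp Q \leadsto P) \subseteq \bigl({}^\perp({}^\perp Q \leadsto P)\bigr)^\perp$, which is Observation~\ref{obse:initialrl}(3). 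Thus the whole conclusion reduces to showing $R^\perp \subseteq ({}^\perp Q \leadsto P)$, which by Definition~\ref{defi:pushconductor} means: for every $\pi \in R^\perp$, one has ${}^\perp Q \cdot \pi \subseteq P$.

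Now I would connect the two ends. From the hypothesis ${}^\perp P \subseteq {}^\perp({}^\perp Q \cdot R)$, applying $(\quad)^\perp$ and using antimonotonicity together with Observation~\ref{obse:initialrl}(3) gives $({}^\perp({}^\perp Q \cdot R))^\perp \subseteq ({}^\perp P)^\perp = P$ (the last equality since $P \in \mathcal P_\perp(\Pi)$), and since $({}^\perp Q \cdot R) \subseteq ({}^\perp({}^\perp Q \cdot R))^\perp$ always, we get ${}^\perp Q \cdot R \subseteq P$. It remains to pass from ${}^\perp Q \cdot R \subseteq P$ to: for all $\pi \in R^\perp$, ${}^\perp Q \cdot \pi \subseteq P$. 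This is where one has to be careful, because $\pi \in R^\perp$ need not be an element of $R$; but $R \in \mathcal P_\perp(\Pi)$ so $R = ({}^\perp R)^\perp$, and the relevant monotonicity of the push map through the conductor — i.e.\ the adjunction $m_L(Q) \subseteq P \Leftrightarrow Q \subseteq a_L(P)$ of the Observation following Definition~\ref{defi:pushconductor} applied with $L = {}^\perp Q$ — lets us rewrite ${}^\perp Q \cdot R \subseteq P$ as $R \subseteq ({}^\perp Q \leadsto P)$, and then the goal $R^\perp \subseteq ({}^\perp Q \leadsto P)$ is exactly what is needed. I expect the main obstacle to be precisely this last bookkeeping step: making sure that the double-perpendicularity closure of the conductor set behaves correctly, i.e.\ that $({}^\perp Q \leadsto P)$ is closed enough (or can be enlarged to its closure without harm) so that $R \subseteq ({}^\perp Q \leadsto P)$ upgrades to $R^\perp \subseteq ({}^\perp Q \leadsto P)$ — here I would use that $P \in \mathcal P_\perp(\Pi)$ forces $({}^\perp Q \leadsto P)$ to already be $\perp$-closed, which is the one lemma-style verification the argument genuinely rests on.
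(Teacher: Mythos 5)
Your main chain is in substance the paper's own proof: the hypothesis $Q \rightarrow R \subseteq P$ is equivalent to ${}^\perp Q \cdot R \subseteq P$ (double-perp closure plus $P \in \mathcal P_\perp(\Pi)$), and from there one reads off that every $\pi \in R$ satisfies ${}^\perp Q \cdot \pi \subseteq P$, i.e.\ $R \subseteq {}^\perp Q \leadsto P$. Up to that point everything you write is correct.

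The problem is your last paragraph, where you locate the ``one lemma-style verification the argument genuinely rests on.'' First, the object $R^\perp$ you introduce is not well formed: $R \subseteq \Pi$, and $(\quad)^\perp$ is only defined on subsets of $\Lambda$; if what you mean is the closure $({}^\perp R)^\perp$, then since $R \in \mathcal P_\perp(\Pi)$ this is just $R$ again, so there is nothing to ``upgrade'' and the careful step you worry about is vacuous. Second, and more seriously, the claim you propose to rest on --- that $P \in \mathcal P_\perp(\Pi)$ forces ${}^\perp Q \leadsto P = \{\pi : {}^\perp Q \cdot \pi \subseteq P\}$ to be $\perp$-closed --- is not provable in this generality and runs against the design of the paper: $\perp$-closedness of this conductor is exactly the \emph{extra} hypothesis in Observation \ref{obse:preadjunction}(2) under which the full adjunction $P \circ Q \supseteq R \Leftrightarrow P \supseteq Q \rightarrow R$ would hold, and if it were automatic the whole apparatus of the combinator $\operatorname{E}$ and Theorem \ref{theo:adjunctionconverse} (which recovers the converse only up to $(\operatorname{E}{}^\perp P)^\perp$) would be pointless. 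Fortunately your argument never needs it: having $R \subseteq {}^\perp Q \leadsto P$, you finish with Observation \ref{obse:initialrl}(3) alone, since ${}^\perp Q \leadsto P \subseteq \big({}^\perp({}^\perp Q \leadsto P)\big)^\perp = P \circ Q$; this one-line closure argument is how the paper concludes, and it uses only that any set is contained in its double perpendicular, not that the conductor is itself $\perp$-closed.
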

\begin{proof} The inclusion $Q \rightarrow R \subseteq P$ means that
  $({}^\perp({}^{\perp}Q \cdot R))^{\perp} \subseteq P$ and this is
  equivalent to ${}^{\perp}Q \cdot R \subseteq P$. Now, this implies
  that $R \subseteq \{\pi \in \Pi: {}^\perp Q .\,\pi \subseteq P\}$
  that implies that $R \subseteq P \circ Q$.  
\end{proof}
\begin{obse}\label{obse:preadjunction} 
\begin{enumerate}
\item We have used the following elementary fact: if $P,Q,R \in
  \mathcal P_\perp(\Pi)$. Then, $P \supseteq {}^\perp Q \cdot R$ if
  and only if  $ \{\pi \in \Pi: P \supseteq {}^\perp Q .\,\pi\}
  \supseteq R$. 
\item From the above comment it follows that if $({}^\perp\{\pi \in
  \Pi: P \supseteq {}^\perp Q .\,\pi\})^{\perp}=\{\pi \in \Pi: P
  \supseteq {}^\perp Q .\,\pi\}$ --i.e. if $\{\pi \in \Pi: P \supseteq
            {}^\perp Q .\,\pi\} \in \mathcal P_\perp(\Pi)$-- then the
            conditions $P \circ Q \supseteq R$ and $P \supseteq Q
            \rightarrow R$ are equivalent. 
\item Along the proof of Theorem \ref{theo:adjunction} we obtained the
  following fact: the inclusion $Q \rightarrow R \subseteq P$ is
  equivalent to  $({}^\perp({}^{\perp}Q \cdot R))^{\perp} \subseteq P$
  that is equivalent to ${}^{\perp}Q \cdot R \subseteq P$. 
\end{enumerate}
\end{obse}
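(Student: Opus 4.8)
The statement has three parts, and the plan is to dispatch them in turn, each one reducing to the Galois-connection bookkeeping of Section~\ref{section:two} together with the elementary push-map adjunction $m_L(Q')\subseteq P\Leftrightarrow Q'\subseteq a_L(P)$ noted after Definition~\ref{defi:pushconductor}. For part (1) I would simply unfold the definitions. Writing $S={}^\perp Q\subseteq\Lambda$, one has $S\cdot R=\bigcup_{\rho\in R}S\cdot\rho$, whence $S\cdot R\subseteq P$ holds if and only if $S\cdot\rho\subseteq P$ for every $\rho\in R$, that is, if and only if $R\subseteq\{\pi\in\Pi:S\cdot\pi\subseteq P\}$. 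This is precisely the instance of the $m_L\dashv a_L$ adjunction with $L={}^\perp Q$ and $Q'=R$; note that the hypothesis $P,Q,R\in\mathcal P_\perp(\Pi)$ is in fact not used here.

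For part (3) --- which, as the remark observes, is implicit in the proof of Theorem~\ref{theo:adjunction} --- the first equivalence is nothing but the definition of $Q\rightarrow R$ given in Definition~\ref{defi:maps},~\eqref{item:maps2}. For the second equivalence I would set $X={}^\perp Q\cdot R\subseteq\Pi$. The implication ``$({}^\perp X)^\perp\subseteq P\ \Rightarrow\ X\subseteq P$'' is immediate from $({}^\perp X)^\perp\supseteq X$, i.e. part (3) of Observation~\ref{obse:initialrl}. For the converse one uses that $P\in\mathcal P_\perp(\Pi)$: from $X\subseteq P$, antimonotonicity of ${}^\perp(\quad)$ gives ${}^\perp P\subseteq{}^\perp X$, and then antimonotonicity of $(\quad)^\perp$ gives $({}^\perp X)^\perp\subseteq({}^\perp P)^\perp=P$.

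For part (2), the extra hypothesis says exactly that $T:=\{\pi\in\Pi:{}^\perp Q\cdot\pi\subseteq P\}$ lies in $\mathcal P_\perp(\Pi)$, so that $P\circ Q=({}^\perp T)^\perp=T$ by Definition~\ref{defi:maps},~\eqref{item:maps1}. Then $R\subseteq P\circ Q$ becomes $R\subseteq T$, which by part (1) is equivalent to ${}^\perp Q\cdot R\subseteq P$, which by part (3) is equivalent to $Q\rightarrow R\subseteq P$. Chaining these equivalences yields the asserted equivalence of $P\circ Q\supseteq R$ and $P\supseteq Q\rightarrow R$, i.e. the upgrade of the half adjunction of Theorem~\ref{theo:adjunction} to a genuine adjunction.

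None of the three verifications is genuinely hard; the only place that calls for care is the asymmetry used in parts (3) and (2). The inclusion $X\subseteq({}^\perp X)^\perp$ holds for \emph{every} $X\subseteq\Pi$, whereas the reverse passage $({}^\perp X)^\perp\subseteq P$ requires $P$ to be invariant under double perpendicularity; this is exactly why Theorem~\ref{theo:adjunction} delivers only a half adjunction in general, and why the closure hypothesis on $T$ is what lets the full adjunction go through in part (2). I expect this bookkeeping --- keeping track of which inclusions genuinely need $\mathcal P_\perp$-membership and which hold freely --- to be the main (and mild) obstacle, with everything else being a direct unfolding of the definitions.
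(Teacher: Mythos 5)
Your verification is correct and follows essentially the same route the paper takes: the paper gives no separate proof of this observation (it merely records the unfoldings used in the proof of Theorem \ref{theo:adjunction}), and your arguments — the $m_L\dashv a_L$ adjunction for part (1), double-perpendicularity plus $P\in\mathcal P_\perp(\Pi)$ for part (3), and the closure hypothesis turning $({}^\perp T)^\perp$ into $T$ for part (2) — are exactly the bookkeeping the paper relies on, including your (correct) remark that part (1) needs no $\mathcal P_\perp$-membership.
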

Using the above adjunction result --Theorem \ref{theo:adjunction}--in the case that $P=Q \rightarrow R$ we obtain the following Corollary.
\begin{coro}\label{coro:adjunction} For all $R,Q \in \mathcal P_\perp(\Pi)$, we have that $R \subseteq (Q \rightarrow R)\circ Q$.
\end{coro}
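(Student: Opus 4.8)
The plan is to obtain this as an immediate specialization of Theorem \ref{theo:adjunction}, exactly as the sentence preceding the statement suggests. First I would record that, by Definition \ref{defi:maps}, \eqref{item:maps2}, the set $Q \rightarrow R$ lies in $\mathcal P_\perp(\Pi)$, so it is a legitimate value for the parameter $P$ appearing in the half adjunction property.

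Next, I would instantiate Theorem \ref{theo:adjunction} with $P := Q \rightarrow R$, leaving $Q$ and $R$ unchanged. The hypothesis of the theorem then becomes $Q \rightarrow R \subseteq Q \rightarrow R$, which holds trivially by reflexivity of inclusion. Therefore the conclusion $R \subseteq P \circ Q$ supplied by the theorem is precisely $R \subseteq (Q \rightarrow R)\circ Q$, which is what is asserted.

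The only point that needs a moment's attention — and the closest thing to an ``obstacle'', though it is entirely routine — is to confirm that this instantiation is admissible: Theorem \ref{theo:adjunction} is stated under the standing assumption $P,Q,R \in \mathcal P_\perp(\Pi)$, so one must first know that $Q \rightarrow R$ is invariant under double perpendicularity before substituting it for $P$. That invariance is exactly the membership statement recorded in Definition \ref{defi:maps}, \eqref{item:maps2}, so nothing further is required and no computation is involved.
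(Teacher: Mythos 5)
Your proposal is correct and is exactly the paper's argument: the Corollary is obtained by specializing the half adjunction property (Theorem \ref{theo:adjunction}) to $P = Q \rightarrow R$, where the hypothesis becomes the trivial inclusion $Q \rightarrow R \subseteq Q \rightarrow R$. Your extra remark that $Q \rightarrow R \in \mathcal P_\perp(\Pi)$ (via Definition \ref{defi:maps}, \eqref{item:maps2}) is a legitimate, if routine, bookkeeping point that the paper leaves implicit.
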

\item It is important to remark that in fact, the operations $\circ$
  and $\rightarrow$ are not independent. Their close relationship is
  illustrated in the theorem that follows .  

%% \begin{defi} \label{defi:completeness} Assume that $\mathcal X$ is
%% a subset of $\mathcal X \subseteq \mathcal
%% P_\perp(\Pi)$. Define: \[\operatorname{sup}(\mathcal
%% X)=\Big({}^\perp\big(\bigcup\{P: P \in \mathcal X
%% \}\big)\Big)^\perp\,\,,\,\,\operatorname{inf}(\mathcal
%% X)=\Big({}^\perp\big(\bigcap\{P: P \in \mathcal X
%% \}\big)\Big)^\perp.\] 
%% \end{defi}
%% It is clear that $\operatorname{sup}(\mathcal X)\, \text{and}\,
%% \operatorname{inf}(\mathcal X)$ are the infimum and the supremum of
%% the set $\mathcal X$ in $\mathcal P_\perp(\Pi)$ with respect to the
%% order given by the inclusion of sets.  

%% Next we show --and this will be of importance for future
%% developments-- that the order by inclusion and the completeness
%% property, allow us to define the operation $\circ$ in terms of the
%% operation $\rightarrow$.   

\section{Abstract Krivine Structures.}
\label{section:four}
\item In this section we complete the definition of a calculus in a
  realizability lattice to obtain the concept of  \emph{pre--Abstract
    Krivine Structure} abbreviated as $\mathcal {PAKS}$. For that, we
  introduce the usual application map for terms, a save map from
  stacks to terms, the combinators
  $\operatorname{K},\operatorname{S}$, and a distinguished term
  $\operatorname{cc}$ that is a realizer of Peirce's law.   
\begin{defi} \label{defi:aks}A \emph{pre--Abstract Krivine Structure} consists of the following elements:
\begin{enumerate}
\item \label{item:aksdecuple} A
  nonuple \[(\Lambda,\Pi,\Perp,\operatorname{app},\operatorname{save},\operatorname{push},
  \operatorname{K},\operatorname{S},\operatorname{cc}),\] 
where: 
\medskip
\begin{enumerate}
\item $(\Lambda, \Pi, \Perp)$ is an  $\mathcal {RL}$.
\item $\operatorname{app}:\Lambda \times \Lambda \rightarrow \Lambda$ is a function: $(t,u) \mapsto \operatorname{app}(t,u)=tu$.
\item $\operatorname{save}:\Pi \rightarrow \Lambda$ is a function: $\pi \mapsto \operatorname{save}(\pi)=k_\pi$.
\item $\operatorname{push}:\Lambda \times \Pi \rightarrow \Pi$ is a
  function. We abbreviate $(t,\pi) \mapsto \operatorname{push}(t,\pi)=t . \pi$.
\item $\operatorname{K},\operatorname{S},\operatorname{cc} \in \Lambda$ are distinguished elements.  
\end{enumerate}
The elements of $\Lambda \times \Pi$ are called \emph{processes} and
we denote the process $(t, \pi)$ as $t\star\pi$. 
\item \label{item:aksaxioms} The above elements are subject to the following axioms.
\medskip
\newcounter{qcounter}
\begin{list}{(S\arabic{qcounter})}{\usecounter{qcounter}}
\item If $t \star s.\pi \in \Perp$,  then $ts \star \pi \in \Perp$
  --in the case that the converse holds, i.e. if  $ts \star \pi \in
  \Perp$ implies that $t \star s \cdot \pi \in \Perp$, we say that the
  given $\mathcal {PAKS}$ is \emph{strong}.

\item If $t \star \pi \in \Perp$, then for all $s \in \Lambda$ we have that $\operatorname{K} \star \, t \cdot s \cdot \pi \in \Perp$. 
\item If $tu (su) \star \pi \in \Perp$, then $\operatorname{S} \star \, t \cdot s \cdot u \cdot \pi \in \Perp$.
\item If $t \star k_\pi \cdot \pi \in \Perp$, then $\operatorname{cc} \star \, t \cdot \pi \in \Perp$.
\item If $t\star\pi \in \Perp$, then for all $\pi'\in \Pi$ we have that $k_\pi \star t \cdot \pi' \in \Perp$.
\end{list}
\end{enumerate} 
\end{defi}
\item Here and in the rest of these notes, product--like operations will --in general-- be non associative. Hence, when parenthesis are omitted it is implicit that we associate to the left. In other words: \[a_1a_2a_3=(a_1a_2)a_3\,\, \text{and in general}\,\, a_1a_2a_3\cdots a_n=(a_1a_2a_3\cdots a_{n-1})a_n.\] 
\item Notice, that besides adding the application, the save map and three distinguished terms to the structure of a \emph{realizability lattice with a push}, we have introduced five axioms that interrelate the above data and that can be divided into three groups. The first axiom interrelates the newly defined application map with the push. The second and third establishes interactions between the combinators and the push map, while the 
fourth and fifth establishes relations between the push map, the save map and the distinguished element $\operatorname{cc}$. 
\item The elements of the structure above, named as: \[save:\pi \mapsto k_\pi :\Pi \rightarrow \Lambda \quad \text{and} \quad \operatorname{cc} \in \Lambda,\] have a very special role in the sense that they make the realizability theory \emph{classical} as $\operatorname{cc}$ realizes Pierce's law. In this sense it may be convenient to introduce the following nomenclature, in the presence of the mentioned elements and the corresponding axioms (S4) and (S5), we say that the $\mathcal {PAKS}$ --and later the $\mathcal {AKS}$-- is \emph{classical}.  
\item \label{item:axiomsperp} The axioms for a $\mathcal {PAKS}$ appearing in Definition \ref{defi:aks}, \eqref{item:aksaxioms} can be formulated also as follows:
\newcounter{tcounter}
\begin{list}{(S\arabic{tcounter})}{\usecounter{tcounter}}
\item If $t \perp s \cdot \pi$,  then $ts \perp \pi$ --moreover $ts \perp \pi$ if and only if $t \perp s \cdot \pi$ in the case that the given $\mathcal {PAKS}$ is strong. 
\item If $t \perp \pi$, then for all $s \in \Lambda$ we have that $\operatorname{K} \perp t \cdot s \cdot \pi$. 
\item If $tu (su) \perp \pi$, then $\operatorname{S} \perp  t \cdot s \cdot u  \cdot \pi$.
\item If $t \perp k_\pi \cdot \pi$, then $\operatorname{cc} \perp t \cdot \pi$.
\item If $t \perp \pi$, then for all $\pi'\in \Pi$ we have that $k_\pi \perp t \cdot \pi'$. 
\end{list}
\begin{obse}\label{obse:weakercond} The following weaker consequences of the last three axioms can be deduced easily applying (S1) to (S2)\ldots(S5). In the case that the $\mathcal {PAKS}$ is strong, the conditions below are equivalent to the original ones. 
\begin{enumerate}
\item If $t \perp \pi$, then for all $s \in \Lambda$ we have that $\operatorname{K}ts \perp \pi$. 
\item If $tu (su) \perp \pi$, then $\operatorname{S}tsu \perp \pi$.
\item If $t \perp k_\pi \cdot \pi$, then $\operatorname{cc}t \perp \pi$. If the $\mathcal {PAKS}$ is strong we have that: if $tk_\pi \perp \pi$, then $\operatorname{cc}t \perp \pi$.
\item \label{item:nonempty} If $t \perp \pi$, then for all $\pi' \in \Pi$ we have that $k_\pi t \perp \pi'$. In other words if $t \perp \pi$, then $k_\pi t \in {}^\perp\Pi$.
\end{enumerate}
Observe that the last assertion exhibits a situation related to Observation \ref{obse:initialrl}, \eqref{item:perpnonempty}
\end{obse}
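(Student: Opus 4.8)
The plan is to read each of the axioms (S2)--(S5) in the perpendicularity form recorded in paragraph \ref{item:axiomsperp} and then strip the leading stack entries one at a time by repeated use of (S1). The convention that makes this mechanical is the following: since the push map has signature $\Lambda\times\Pi\to\Pi$, an expression $t\cdot s\cdot\pi$ can only be parsed as $t\cdot(s\cdot\pi)$, whereas application associates to the left, so that $\operatorname{K}ts=(\operatorname{K}t)s$, $\operatorname{S}tsu=((\operatorname{S}t)s)u$, and $\operatorname{cc}t$, $k_\pi t$ are unambiguous. Under these conventions, (S1) is precisely the move ``trade a leading term of the stack for an application on the left of the realizer'', which is exactly what each item asks for.

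First, for item (1): from $t\perp\pi$, axiom (S2) gives $\operatorname{K}\perp t\cdot s\cdot\pi$; applying (S1) once gives $\operatorname{K}t\perp s\cdot\pi$, and once more $\operatorname{K}ts\perp\pi$. Item (2) is the same with one extra step: from $tu(su)\perp\pi$, axiom (S3) gives $\operatorname{S}\perp t\cdot s\cdot u\cdot\pi$, and three applications of (S1) produce successively $\operatorname{S}t\perp s\cdot u\cdot\pi$, then $\operatorname{S}ts\perp u\cdot\pi$, then $\operatorname{S}tsu\perp\pi$. Item (3) needs a single application: from $t\perp k_\pi\cdot\pi$, axiom (S4) gives $\operatorname{cc}\perp t\cdot\pi$ and then (S1) gives $\operatorname{cc}t\perp\pi$; for the strong variant one first uses the converse half of (S1) to pass from $tk_\pi\perp\pi$ to $t\perp k_\pi\cdot\pi$ and then proceeds as before. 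Item (4) also needs one application: from $t\perp\pi$, axiom (S5) gives $k_\pi\perp t\cdot\pi'$ for every $\pi'\in\Pi$, and (S1) gives $k_\pi t\perp\pi'$; since $\pi'$ was arbitrary this says $k_\pi t\in{}^\perp\Pi$, which is the phenomenon already noted in Observation \ref{obse:initialrl},\eqref{item:perpnonempty}.

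For the equivalence claim: when the $\mathcal {PAKS}$ is strong, (S1) is a biconditional, so each of the chains of (S1)-applications above is reversible. Hence $\operatorname{K}ts\perp\pi\Leftrightarrow\operatorname{K}\perp t\cdot s\cdot\pi$, $\operatorname{S}tsu\perp\pi\Leftrightarrow\operatorname{S}\perp t\cdot s\cdot u\cdot\pi$, $\operatorname{cc}t\perp\pi\Leftrightarrow\operatorname{cc}\perp t\cdot\pi$, and $k_\pi t\perp\pi'\Leftrightarrow k_\pi\perp t\cdot\pi'$; substituting these equivalences into items (1)--(4) recovers the original axioms (S2)--(S5), so under strength each displayed statement is equivalent to the axiom it came from.

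I do not expect a genuine obstacle here; the whole argument is bookkeeping. The only points requiring care are the parenthesization --- remembering that the push is necessarily right-nested while application is left-nested, so that (S1) always acts on the outermost leading term of the stack --- and, in the strong case, invoking the converse direction of (S1) rather than its forward direction. If anything counts as the ``hard part'', it is merely keeping track of which stack entry is being consumed at each step, and that is handled uniformly by always reading $t\cdot s\cdot\pi$ as $t\cdot(s\cdot\pi)$.
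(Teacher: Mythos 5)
Your proposal is correct and follows exactly the route the paper intends: the Observation is stated without a written proof beyond the remark that the items follow ``easily applying (S1) to (S2)\ldots(S5)'', and your argument is precisely that verification, popping stack entries with (S1) the appropriate number of times and using the converse half of (S1) in the strong case to recover the equivalences. Nothing is missing; the parenthesization remarks are the only delicate point and you handle them correctly.
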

\item Once we have at our disposal the map $\operatorname{app}:(t,s)\mapsto ts: \Lambda \times \Lambda \rightarrow \Lambda $,  we can define the following \emph{conductor}  for $L,M \subseteq \Lambda$ --compare with the previous Definition \ref{defi:pushconductor}--:
\[L \leadsto M=\{t \in \Lambda: tL \subseteq M\} \subseteq \Lambda\quad \text{left conductor of $L$ into $M$}.\]
Notice that similarly than before, the above conductor can be characterized in the following way:
\[L \leadsto M=\bigcup \{L' \subseteq \Lambda: L'L \subseteq M.\}\]
Considering also the natural operator coming from the application
--$\operatorname{app}$--:  
\begin{eqnarray}
\nonumber (L,M) \mapsto LM:\mathcal P(\Lambda) \times \mathcal P(\Lambda) \rightarrow \mathcal P(\Lambda), 
%% \\ 
%% \nonumber (L,P)\mapsto L.P:\mathcal P(\Lambda) \times \mathcal P(\Pi) \rightarrow \mathcal P(\Pi),
\end{eqnarray} 
we may define the maps $a_L,m_L: \mathcal P(\Lambda) \rightarrow
\mathcal P(\Lambda)$ by $a_L(M)=L \leadsto M$ and $m_L(M)=LM$\footnote{In principle, the maps defined above  are not internal maps in the corresponding $\mathcal P_\perp$s.}. 

We have the following adjoint relationship: for all $L,M,N \subseteq \Lambda$; \[m_L(N) \subseteq M \Leftrightarrow N \subseteq a_L(M).\]
\begin{defi} \label{defi:maps2}For $P,Q \in \mathcal P_{\perp}(\Pi)$ we define the following binary operation in $\mathcal P_\perp(\Pi)$:
\[P \diamond Q= \operatorname{app}({}^\perp P,{}^\perp Q)^\perp=\big(({}^\perp P) ({}^\perp Q)\big)^{\perp} \in \mathcal P_{\perp}(\Pi).\]
%and combining it with the operators $(\quad)^\perp$ and ${}^\perp(\quad)$ we can obtain a new operation $\mathcal P_{\perp}(\Pi)$. 
%We concentrate our attention in the following three operations in $\mathcal P_{\perp}(\Pi)$ with special emphasis in $\circ$ and $\rightarrow$.

%% \item \label{item:maps2} $P \rightarrow Q= ({}^\perp\operatorname{push}({}^\perp P,Q))^{\perp}=({}^\perp({}^\perp P \cdot Q))^{\perp} \in \mathcal P_{\perp}(\Pi)$.
%% \end{enumerate}
\end{defi}
%\begin{obse}\label{obse:simplification}
%The importance of the three operations defined until now in $\mathcal
%P_\perp(\Pi)$: $\circ, \rightarrow$ and $\diamond$, can be visualized
%when one performs the following computations in singleton sets. 
%\begin{enumerate}
%\item \label{item:simplcirc} Let us take $t,s \in \Lambda$. We have
%  that ${}^\perp({\{t\}^\perp\circ{}\{s\}^\perp)={}^\perp\{\pi \in
%    \Pi: {}^\perp}(\{s\}^{\perp}).\pi \subseteq \{t\}^\perp\}
%  \supseteq {}^\perp\{\pi \in \Pi: s.\pi \in \{t\}^\perp\} =
%            {}^\perp\{\pi \in \Pi:t \perp s.\pi\}$. Hence, we have
%            proved that:  
% \[\{\pi \in \Pi:t \perp s.\pi\} \supseteq \{t\}^\perp\circ\{s\}^\perp.\]
%\item \label{item:simpldiamond}Moreover: \[\{t\}^\perp \diamond
%  \{s\}^\perp=\Big(\big({}^\perp(\{t\}^{\perp})\big)\big({}^\perp(\{s\}^{\perp})
%  \big)\Big)^\perp 
%  \subseteq \{ts\}^{\perp}.\] 
%\item \label{item:simplarrow} Also: \[\{t\}^\perp \rightarrow Q
%  =({}^\perp({}^\perp(\{t\}^{\perp}). Q))^{\perp}\supseteq
%  ({}^\perp(t. Q))^{\perp},\] or equivalently: 
%\[{}^\perp(\{t\}^\perp \rightarrow Q) \subseteq {}^\perp(t. Q).\]
%\end{enumerate}
%\end{obse}
\begin{obse}\label{obse:simplification}
The importance of the three operations $\circ, \rightarrow$ and $\diamond$ defined in $\mathcal P_\perp(\Pi)$ can be visualized when one performs the following computations in singleton sets.
\begin{enumerate}
\item \label{item:simplcirc} Let us take $t,s \in \Lambda$. We have that: \begin{equation}\{t\}^\perp\circ{}\{s\}^\perp=\big({}^\perp\{\pi \in \Pi: r\perp \ell.\pi \,\,\forall r \perp \{t\}^\perp, \forall \ell \perp \{s\}^\perp\}\big)^\perp,
\end{equation}\label{eqn:circ1} 
and as $t \in \{t\}^\perp; s \in \{s\}^\perp$ we deduce that:
$\big({}^\perp\{\pi \in \Pi: r\perp \ell.\pi \,\,\forall r \perp \{t\}^\perp, \forall \ell \perp \{s\}^\perp\}\big)^\perp \subseteq \big({}^\perp\{\pi \in \Pi:t \perp s.\pi\}\big)^\perp$.
Then:  
 \begin{equation}\label{eqn:circ2}\{t\}^\perp\circ\{s\}^\perp \subseteq \big({}^\perp\{\pi \in \Pi:t \perp s.\pi\}\big)^\perp.\end{equation}

\item \label{item:simpldiamond}Moreover, by definition we have that $\{t\}^\perp \diamond \{s\}^\perp=\Big(\big({}^\perp(\{t\}^{\perp})\big)\big({}^\perp(\{s\}^{\perp})\big)\Big)^\perp$ and:
\begin{equation}\label{eqn:diamond1}\{t\}^\perp \diamond \{s\}^\perp=\{\pi \in \Pi: r\ell \perp \pi \,\,\forall r \perp \{t\}^\perp, \forall \ell \perp \{s\}^\perp\}.\end{equation} 
Hence: \begin{equation}\label{eqn:diamond2}\{t\}^\perp \diamond \{s\}^\perp \subseteq \{ts\}^{\perp}.\end{equation}
\item \label{item:S1diamondcirc} Next we show that there is a very close relationship between the operations $\circ, \diamond$ and the basic condition (S1) of Definition \ref{defi:aks}, \eqref{item:aksaxioms}. 

Indeed, condition (S1) implies that:
\begin{align*} \big({}^\perp\{\pi \in \Pi: r\perp \ell.\pi \,\,\forall r \perp \{t\}^\perp, \forall \ell \perp \{s\}^\perp\}\big)^\perp &\subseteq \big({}^\perp\{\pi \in \Pi: r\ell\perp \pi \,\,\forall r \perp \{t\}^\perp, \forall \ell \perp \{s\}^\perp\}\big)^\perp=\\
=\{\pi \in \Pi &: r\ell\perp \pi \,\,\forall r \perp \{t\}^\perp, \forall \ell \perp \{s\}^\perp\}.
 \end{align*}
Using the characterization of the operations appearing in \eqref{eqn:circ1} and \eqref{eqn:diamond1}, we deduce that in the presence of condition (S1) we have that for all $t,s$ \[\{t\}^\perp \circ \{s\}^\perp \subset \{t\}^\perp \diamond \{s\}^\perp.\]
\item \label{item:simplarrow} Concerning the implication we have: \[\{t\}^\perp \rightarrow Q =({}^\perp({}^\perp(\{t\}^{\perp}). Q))^{\perp}\supseteq ({}^\perp(t. Q))^{\perp},\] or equivalently:
\[{}^\perp(\{t\}^\perp \rightarrow Q) \subseteq {}^\perp(t. Q).\]
\end{enumerate}
\end{obse}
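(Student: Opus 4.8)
The plan is to dispose of the four items one at a time, in each case just expanding the relevant definition --- Definition \ref{defi:maps} for $\circ$ and $\rightarrow$, Definition \ref{defi:maps2} for $\diamond$, and Definition \ref{defi:pushconductor} for the conductor $\leadsto$ --- and then invoking the elementary facts about the Galois connection collected in Observation \ref{obse:initialrl}: the operators ${}^\perp(\quad)$ and $(\quad)^\perp$ are antimonotone, they are mutually inverse on the double--perp--closed subsets, and any set of the shape $L^\perp$ already lies in $\mathcal P_\perp(\Pi)$. The single remark that does all the real work is that every term $t$ realizes its own perp, i.e.\ $t\in{}^\perp(\{t\}^\perp)$, which is immediate from $\{t\}^\perp=\{\pi:t\perp\pi\}$. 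Consequently a universal quantifier ranging over all realizers of $\{t\}^\perp$ may always be weakened to its instance $r=t$, which enlarges the subset of $\Pi$ it defines; applying ${}^\perp(\quad)$ and then $(\quad)^\perp$ to such an enlargement reverses the inclusion twice and yields exactly the asserted inclusion of double--perp closures. (In the same spirit, for a single term $u$ one has $\{u\}\subseteq L$ inside a set $L$ of terms as soon as $u\in L$, and one application of $(\quad)^\perp$ gives $L^\perp\subseteq\{u\}^\perp$.)

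For item \eqref{item:simplcirc} I would substitute $P=\{t\}^\perp$, $Q=\{s\}^\perp$ into $P\circ Q=({}^\perp({}^\perp Q\leadsto P))^\perp$ and expand ${}^\perp Q\leadsto P=\{\pi:\ {}^\perp(\{s\}^\perp)\cdot\pi\subseteq\{t\}^\perp\}$; since $\{t\}^\perp\in\mathcal P_\perp(\Pi)$, the membership $\ell\cdot\pi\in\{t\}^\perp$ is the same as ``$r\perp\ell\cdot\pi$ for every $r\perp\{t\}^\perp$'', and ``$\ell\perp\{s\}^\perp$'' is just ``$\ell\in{}^\perp(\{s\}^\perp)$'', which gives the displayed equality. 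Restricting both quantifiers to $r=t$, $\ell=s$ enlarges the inner set to $\{\pi:t\perp s\cdot\pi\}$, and two perps then give the inclusion \eqref{eqn:circ2}. Item \eqref{item:simpldiamond} is the same bookkeeping for $\diamond$: expanding $P\diamond Q=\big(({}^\perp P)({}^\perp Q)\big)^\perp$ produces $\{\pi:\ u\perp\pi\ \forall u\in({}^\perp\{t\}^\perp)({}^\perp\{s\}^\perp)\}$, and keeping only $u=ts$ enlarges this to $\{ts\}^\perp$, whence \eqref{eqn:diamond2}; here no outer closure operator is needed, because $\{ts\}^\perp$ is already of the form $L^\perp$.

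Item \eqref{item:S1diamondcirc} is the only place where axiom (S1) enters, and I would run the comparison at the level of the \emph{unclosed} sets of stacks. If $\pi$ satisfies $r\perp\ell\cdot\pi$ for all $r\perp\{t\}^\perp$ and all $\ell\perp\{s\}^\perp$, then axiom (S1) of Definition \ref{defi:aks} (in its $\perp$--formulation, applied to $r,\ell,\pi$) yields $r\ell\perp\pi$ for all such $r,\ell$, i.e.\ $\pi\in\big(({}^\perp\{t\}^\perp)({}^\perp\{s\}^\perp)\big)^\perp$, a set which is already double--perp closed. Applying ${}^\perp(\quad)$ and then $(\quad)^\perp$ to this inclusion, and identifying the two sides via items \eqref{item:simplcirc} and \eqref{item:simpldiamond}, gives $\{t\}^\perp\circ\{s\}^\perp\subseteq\{t\}^\perp\diamond\{s\}^\perp$. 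Finally, for item \eqref{item:simplarrow} I would expand $\{t\}^\perp\rightarrow Q=({}^\perp({}^\perp(\{t\}^\perp)\cdot Q))^\perp$, use $t\in{}^\perp(\{t\}^\perp)$ to get $t\cdot Q\subseteq{}^\perp(\{t\}^\perp)\cdot Q$, and apply ${}^\perp(\quad)$ then $(\quad)^\perp$ to obtain $({}^\perp(t\cdot Q))^\perp\subseteq\{t\}^\perp\rightarrow Q$; the equivalent inequality ${}^\perp(\{t\}^\perp\rightarrow Q)\subseteq{}^\perp(t\cdot Q)$ is the same statement read one perp earlier, using the identity ${}^\perp(P\rightarrow Q)={}^\perp({}^\perp P\cdot Q)$ already recorded right after Definition \ref{defi:maps}.

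I do not expect a genuine obstacle --- the content is entirely formal --- but the step that most needs care is keeping track, at each stage, of which sets truly belong to $\mathcal P_\perp(\Pi)$ (so that an outer $({}^\perp(\quad))^\perp$ may legitimately be dropped, as for $\{ts\}^\perp$) and which must be left wrapped in the closure (as for $\{\pi:t\perp s\cdot\pi\}$), together with the two sign reversals produced by applying the antimonotone pair ${}^\perp(\quad)$, $(\quad)^\perp$ in succession. A minor point of hygiene worth spelling out is that the informal ``$t\in\{t\}^\perp$'' appearing in items \eqref{item:simplcirc} and \eqref{item:simpldiamond} should be read as $t\models\{t\}^\perp$, that is, $t\in{}^\perp(\{t\}^\perp)$.
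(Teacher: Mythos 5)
Your proposal is correct and follows essentially the same route as the paper, whose argument for this Observation is embedded in the statement itself: expand Definitions \ref{defi:maps} and \ref{defi:maps2}, use that $\{t\}^\perp=({}^\perp(\{t\}^\perp))^\perp$ and the antimonotone pair ${}^\perp(\quad)$, $(\quad)^\perp$ from Observation \ref{obse:initialrl}, specialize the universal quantifiers to $r=t$, $\ell=s$, and invoke (S1) only in item \eqref{item:S1diamondcirc}. Your remark that the paper's ``$t\in\{t\}^\perp$'' must be read as $t\in{}^\perp(\{t\}^\perp)$ correctly identifies and repairs the only (notational) slip in the original text.
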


\item For future use, it is interesting to write down the basic axioms
  of a $\mathcal {PAKS}$ in terms of elements of $\mathcal
  P_\perp(\Lambda)$ and $\mathcal P_\perp(\Pi)$ and the operations
  $\circ$, $\rightarrow$, $\diamond$ and the conductors. We emphasize
  --with an eye in future use-- the formulation in terms of $\mathcal
  P_\perp(\Pi)$.   
\begin{lema}\label{lema:axiomset} The axioms of a $\mathcal {PAKS}$
  presented in Definition \ref{defi:aks}, \eqref{item:aksaxioms}
  --also appearing in an equivalent formulation in
  {\Large{\bf{\ref{item:axiomsperp}.}}}--, have the following
  consequences. Assume that $P,Q,R$, are generic elements of
  $\mathcal P_\perp(\Pi)$, and that $\operatorname{K,S,cc} \in
  \Lambda$ are as before, then:    
\newcounter{bcounter}
\begin{list}{\emph{($\mathbb
      S$\arabic{bcounter})}}{\usecounter{bcounter}} 
\item \label{item:circdiamond} Condition \emph{(S1)} in Definition
  \ref{defi:aks}, \eqref{item:aksaxioms} implies condition \emph{(1)}
  that implies condition \emph{(3)} that implies \emph{(2)}.  
\begin{enumerate} 
\item $P \circ Q \subseteq ({}^\perp P {}^\perp Q)^\perp =P \diamond
  Q$ or equivalently ${}^\perp P {}^\perp Q \subseteq {}^\perp(P \circ
  Q)$ or equivalently: if $t \perp P$ and $s \perp Q$, then $ts \perp
  P \circ Q$. 
\item $({}^\perp P \leadsto {}^\perp Q)^\perp \subseteq P \rightarrow
  Q$ or equivalently $Q \subseteq (P \rightarrow Q) \diamond P$.  
\item If ${}^\perp Q . R \subseteq P$, then ${}^\perp P {}^\perp Q
  \subseteq {}^\perp R$. Equivalently, if $Q \rightarrow R \subseteq
  P$, then $R \subseteq P \diamond Q$.  
\end{enumerate}

\item The first condition below is equivalent to condition \emph{(S2)}
  in Definition \ref{defi:aks}, \eqref{item:aksaxioms}, and the second
  is a consequence. 
\begin{enumerate}
\item For all $P,R$, we have that $\operatorname{K} \in
  {}^\perp({}^\perp P.{}^\perp R.P)$. Equivalently, for all $P
  \subseteq Q$ we have that $K \in {}^\perp({}^\perp Q.{}^\perp
  R.P)$. 
\item For all $P,R$, we have that $\operatorname{K}{}^\perp P {}^\perp
  R \subseteq {}^\perp P$. Equivalently, for all $P \subseteq Q$ we
  have that $\operatorname{K}{}^\perp Q {}^\perp R \subseteq {}^\perp
  P$ 
\end{enumerate}
\item The first condition below is equivalent to condition \emph{(S3)}
  in Definition \ref{defi:aks}, \eqref{item:aksaxioms}, and the second
  is a consequence. 
\begin{enumerate}
\item If ${}^\perp P u({}^\perp Q u) \subseteq {}^\perp R$ then
  $\operatorname{S} \in {}^\perp({}^\perp P.{}^\perp Q .u. R)$ with $u
  \in \Lambda$. 
\item If ${}^\perp P u({}^\perp Q u) \subseteq {}^\perp R$ then
  $\operatorname{S} {}^\perp P {}^\perp Q u \subseteq {}^\perp R$ with
  $u \in \Lambda$. 
\end{enumerate}
\item \label{item:classicalcc} 
\begin{enumerate}
\item Axiom \emph{(S4)} in Definition \ref{defi:aks} is equivalent to: 
$\operatorname{cc}\in
  {}^\perp\big((\operatorname{save}(P).P)\rightarrow P\big)$.  
\item Axiom \emph{(S5)} in Definition \ref{defi:aks} is equivalent to: 
$\operatorname{save}(P) \subseteq {}^\perp(P \rightarrow Q)$. 
\item Axioms \emph{(S4)} and \emph{(S5)} imply that for all $P,Q \in
  \mathcal P_\perp(\Pi)$: $\operatorname{cc} \perp ((P \rightarrow
  Q)\rightarrow P)\rightarrow P$. In other words the axioms imply that
  the term $\operatorname{cc} \in \Lambda$ realises Peirce's law. 
\end{enumerate} 
\end{list}
\end{lema}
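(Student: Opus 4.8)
The plan is to read each clause of the lemma as a transcription of the matching axiom (S1)--(S5) of Definition~\ref{defi:aks} into the language of $\mathcal P_{\perp}(\Pi)$, so that the only devices in play are: the Galois identities ${}^\perp(X^\perp)=X$ and $({}^\perp X)^\perp=X$ for perpendicular-closed $X$ (Observation~\ref{obse:initialrl}), hence the unfoldings ${}^\perp(P\rightarrow Q)={}^\perp({}^\perp P\cdot Q)$ and ${}^\perp(P\circ Q)={}^\perp({}^\perp Q\leadsto P)$ coming from Definition~\ref{defi:maps}; the half-adjunction Theorem~\ref{theo:adjunction} and Corollary~\ref{coro:adjunction}; and the principal perpendicular-closed sets $\{t\}^\perp\in\mathcal P_{\perp}(\Pi)$ (for $t\in\Lambda$) and $\langle\pi\rangle:=({}^\perp\{\pi\})^\perp\in\mathcal P_{\perp}(\Pi)$ (for $\pi\in\Pi$), for which $t\in{}^\perp(\{t\}^\perp)$, $\pi\in\langle\pi\rangle$, $\pi\in\{t\}^\perp$ iff $t\perp\pi$, $\{t\}^\perp$ is the largest $P\in\mathcal P_{\perp}(\Pi)$ with $t\in{}^\perp P$ and $\langle\pi\rangle$ the least $P$ with $\pi\in P$, together with $\Lambda^\perp$, which satisfies ${}^\perp(\Lambda^\perp)=\Lambda$.

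I would treat \emph{($\mathbb S$1)} as the stated chain of implications. \emph{(S1)$\Rightarrow$(1):} for $t\in{}^\perp P$, $s\in{}^\perp Q$ and $\pi\in{}^\perp Q\leadsto P$ one has $s\cdot\pi\in P$, so $t\perp s\cdot\pi$, so $ts\perp\pi$ by (S1); hence $ts\in{}^\perp({}^\perp Q\leadsto P)={}^\perp(P\circ Q)$, i.e.\ ${}^\perp P\,{}^\perp Q\subseteq{}^\perp(P\circ Q)$, i.e.\ $P\circ Q\subseteq P\diamond Q$. \emph{(1)$\Rightarrow$(3):} if $Q\rightarrow R\subseteq P$ then $R\subseteq P\circ Q$ by Theorem~\ref{theo:adjunction}, and then $R\subseteq P\diamond Q$ by (1). \emph{(3)$\Rightarrow$(2):} instantiate (3) with $(P\rightarrow Q,\,P,\,Q)$ for $(P,Q,R)$; the hypothesis $P\rightarrow Q\subseteq P\rightarrow Q$ is vacuous, so $Q\subseteq (P\rightarrow Q)\diamond P$, which is (2). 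All the internal ``equivalently'' rephrasings (and the two forms of (2)) are pure Galois dualizations; e.g.\ $Q\rightarrow R\subseteq P\Leftrightarrow{}^\perp Q\cdot R\subseteq P$ is Observation~\ref{obse:preadjunction}.

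For the combinator clauses the direction \emph{axiom $\Rightarrow$ reformulation} is in each case a mechanical unfolding: a generic stack of the set involved --- ${}^\perp P\cdot{}^\perp R\cdot P$, or ${}^\perp P\cdot{}^\perp Q\cdot u\cdot R$, or ${}^\perp P\cdot Q$, or ${}^\perp(\operatorname{save}(P)\cdot P)\cdot P$ (after the unfolding ${}^\perp((\operatorname{save}(P)\cdot P)\rightarrow P)={}^\perp({}^\perp(\operatorname{save}(P)\cdot P)\cdot P)$) --- has exactly the syntactic shape and satisfies exactly the premise required by (S2), (S3), (S5), (S4), the premise for (S4) using that $k_\pi\cdot\pi\in\operatorname{save}(P)\cdot P$ whenever $\pi\in P$. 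The clauses marked ``a consequence'' (($\mathbb S$2b), ($\mathbb S$3b)) then follow by one or two further applications of (S1) (equivalently, from Observation~\ref{obse:weakercond}), and the ``$P\subseteq Q$'' rephrasings are immediate from antimonotonicity of ${}^\perp(-)$. For ($\mathbb S$4c) I would work entirely in $\mathcal P_{\perp}(\Pi)$: by ($\mathbb S$4b) $\operatorname{save}(P)\subseteq{}^\perp(P\rightarrow Q)$, hence $\operatorname{save}(P)\cdot P\subseteq{}^\perp(P\rightarrow Q)\cdot P\subseteq(P\rightarrow Q)\rightarrow P$ (using $X\rightarrow Y\supseteq{}^\perp X\cdot Y$); applying ${}^\perp(-)$, then $-\cdot P$, then ${}^\perp(-)$ and unfolding $\rightarrow$ once more, one gets ${}^\perp((\operatorname{save}(P)\cdot P)\rightarrow P)\subseteq{}^\perp\big(((P\rightarrow Q)\rightarrow P)\rightarrow P\big)$, and combined with $\operatorname{cc}\in{}^\perp((\operatorname{save}(P)\cdot P)\rightarrow P)$ from ($\mathbb S$4a) this gives $\operatorname{cc}\perp((P\rightarrow Q)\rightarrow P)\rightarrow P$, i.e.\ $\operatorname{cc}$ realises Peirce's law.

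The remaining direction, \emph{reformulation $\Rightarrow$ axiom}, is where the argument has to be done with care. For ($\mathbb S$2a) and ($\mathbb S$4b) it is still a one-line specialization: given the premise of the axiom, take $P=\{t\}^\perp$ (and $R=\Lambda^\perp$, resp.\ $Q=\langle\pi'\rangle$), note that $t$, the relevant $s$ or $\pi'$, and $\pi$ lie in ${}^\perp P$, ${}^\perp R$, $\langle\pi'\rangle$, $P$ respectively, and read the axiom off the reformulation. The main obstacle is the converse for the \emph{conditional} reformulations ($\mathbb S$3a) and ($\mathbb S$4a): from the single perpendicularity $(tu)(su)\perp\pi$ (resp.\ $t\perp k_\pi\cdot\pi$) one must exhibit a whole perpendicular-closed triple $(P,Q,R)$ with ${}^\perp P\,u\,({}^\perp Q\,u)\subseteq{}^\perp R$ and $\pi\in R$ (resp.\ a perpendicular-closed $P\ni\pi$ with $\operatorname{save}(P)\cdot P\subseteq\{t\}^\perp$). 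The candidate is again to take principal closed sets --- $\{t\}^\perp,\{s\}^\perp$ and $R=\big(({}^\perp\{t\}^\perp)\,u\,(({}^\perp\{s\}^\perp)\,u)\big)^\perp$, resp.\ $P=\langle\pi\rangle$ --- and the crux is then verifying that $\pi$ still belongs to this $R$ (resp.\ that $\operatorname{save}(\langle\pi\rangle)\cdot\langle\pi\rangle$ remains inside $\{t\}^\perp$); this is precisely the point at which (S1) is used, and --- for the bare axioms rather than only the weakened forms of Observation~\ref{obse:weakercond} --- where the strongness of the $\mathcal{PAKS}$ is what makes the closures collapse enough. Accordingly I would either establish these two equivalences assuming the $\mathcal{PAKS}$ strong, or else record just the unconditional implications axiom $\Rightarrow$ reformulation, which are all that the subsequent sections use.
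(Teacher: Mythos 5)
Your argument coincides in substance with the paper's proof everywhere the paper actually argues: the chain (S1)$\Rightarrow$(1)$\Rightarrow$(3)$\Rightarrow$(2) in ($\mathbb{S}$1) is proved by exactly the same pointwise computation, use of Theorem \ref{theo:adjunction}, and instantiation of (3) at $(P\rightarrow Q,P,Q)$; the forward (axiom $\Rightarrow$ reformulation) directions and the weakened clauses ($\mathbb{S}$2b), ($\mathbb{S}$3b) via (S1)/Observation \ref{obse:weakercond} are the paper's arguments; the converses of ($\mathbb{S}$2a) and ($\mathbb{S}$4b) by specializing to principal closed sets are what the paper does for ($\mathbb{S}$2) (it takes $P=({}^\perp\{\pi\})^\perp$, $R=\{s\}^\perp$); and your derivation of Peirce's law is the paper's chain $\operatorname{cc}\in{}^\perp\big((\operatorname{save}(P).P)\rightarrow P\big)\subseteq{}^\perp\big(((P\rightarrow Q)\rightarrow P)\rightarrow P\big)$ in only slightly different clothing.

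Where you depart from the paper is on the converse directions of ($\mathbb{S}$3a) and ($\mathbb{S}$4a). The paper claims these equivalences but dismisses them with ``the proof of this part uses the same methods'' and ``axiom (S4) can be written as the assertion\ldots'', whereas you observe that the principal-set specialization which settles ($\mathbb{S}$2a) and ($\mathbb{S}$4b) does not transfer: to recover (S3) (resp.\ (S4)) from the reformulation one must verify the conditional hypothesis ${}^\perp P\,u\,({}^\perp Q\,u)\subseteq{}^\perp R$ with $\pi\in R$ (resp.\ the membership $t\in{}^\perp(\operatorname{save}(P).P)$ with $\pi\in P$) over the whole biorthogonal closures ${}^\perp(\{t\}^\perp)$, ${}^\perp(\{s\}^\perp)$, $({}^\perp\{\pi\})^\perp$, and this does not follow from the single perpendicularity $tu(su)\perp\pi$ (resp.\ $t\perp k_\pi\cdot\pi$). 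That scruple is well taken: the paper's one-line remark genuinely does not cover it, and the directions you do prove are the only ones used later (Theorem \ref{theo:pakstooca} and Lemma \ref{lema:akstoocaf} invoke the axioms themselves and the reformulations only in the forward sense). Two caveats: measured against the literal wording of the lemma you prove slightly less than the stated equivalences, and your first proposed repair (assuming the $\mathcal{PAKS}$ strong) is only announced, not carried out --- it is not obvious that strongness alone lets one propagate the perpendicularity through the push map inside stacks, so the safer of your two options is the one that records only the implications. With that proviso, the proposal is sound and, on the converse-direction point, more careful than the paper's own proof.
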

\begin{proof}
\newcounter{vcounter}
\begin{list}{{($\mathbb S$\arabic{vcounter})}}{\usecounter{vcounter}}
\item \ \\
\vspace{-12pt}
\begin{itemize}
\item It is evident that the three formulations of
  condition (1) are 
equivalent.  
\item The two formulations of condition (2) are equivalent. Indeed,
  for arbitrary $L,M \subseteq \Lambda$ we have that $(L \leadsto M)=
  \bigcup \{N \subseteq \Lambda: NL \subseteq M\}$ and then $(L
  \leadsto M)^\perp= \bigcap \{N^\perp \subseteq \Pi: NL \subseteq
  M\}$ and then $({}^\perp P \leadsto {}^\perp Q)^\perp=\bigcap
  \{N^\perp \subseteq \Pi: N({}^\perp P) \subseteq {}^\perp Q\}$. Call
  $N_0={}^\perp(P \rightarrow Q)$, in accordance to the above equality
  in order to prove that $({}^\perp P \leadsto {}^\perp Q)^\perp
  \subseteq (P \rightarrow Q)=N_0^\perp$, all we have to show is that
  $N_0 ({}^\perp P) \subseteq {}^\perp Q$ or in other words that
  ${}^\perp(P \rightarrow Q)({}^\perp P) \subseteq {}^\perp Q$. Taking
  perpendiculars in the above inequality we show that our statement
  implies that $Q \subseteq (P \rightarrow Q) \diamond P$.  

Conversely, the inclusion $Q \subseteq (P \rightarrow Q) \diamond P$
implies that ${}^\perp(P \rightarrow Q)({}^\perp P) \subseteq {}^\perp
Q$, which in turn implies --by the definition of the conductor-- that
${}^\perp(P \rightarrow Q) \subseteq {}^\perp P \leadsto {}^\perp
Q$. Taking perpendiculars again in this inclusion we deduce that
$({}^\perp P \leadsto {}^\perp Q)^\perp \subseteq (P \rightarrow Q)$.   
\item Also, the two formulations of condition (3) are
  equivalent. Indeed, it is clear that ${}^\perp Q.R \subseteq P$ if
  and only if $(Q \rightarrow R)= \big({}^\perp({}^\perp
  Q.R)\big)^\perp\subseteq P$ and also it follows that ${}^\perp P
  {}^\perp Q \subseteq {}^\perp R$ can also be written as $R=
  ({}^\perp R)^\perp \subseteq ({}^\perp P {}^\perp Q)^\perp = P
  \diamond Q$. 
%\item Now we show that the condition $P \circ Q \subseteq P \diamond
%  Q$ implies the original condition (S1) in Definition
%  \ref{defi:aks}. If we take $s,t \in \Lambda$ and
%  $P=\{s\}^\perp,\,Q=\{t\}^\perp$, and apply Observation
%  \ref{obse:simplification}, in the presence of condition (1) we have
%  the following chain of inequalities: 
%\[\{\pi \in \Pi: t \perp s.\pi\} \subseteq \{t\}^\perp \circ
%\{s\}^\perp \subseteq \{t\}^\perp \diamond \{s\}^\perp \subseteq
%\{ts\}^\perp,\] 

%and this last condition is a reformulation of the rule (S1).
\item Assuming that the original formulation of rule (S1)
  holds, we want to prove (1), which is the assertion that for all
  $P,Q \in \mathcal{P}_\perp(\Pi)$, then: 
\[\{\pi \in \Pi: {}^\perp Q.\pi \subseteq P\} \subseteq ({}^\perp P
        {}^\perp Q)^\perp.\] 

In other words we want to show that if $\pi \in \Pi$ is such that
${}^\perp Q.\pi \subseteq P$ then, for all $s \perp P,\, t \perp Q$ we
have that $st \perp \pi$. It is clear that from the hypothesis
${}^\perp Q.\pi \subseteq P$ and $s \perp P,\, t \perp Q$, that $s
\perp t.\pi$ and in this case the original condition (S1) implies that
$st \perp \pi$.     

%% \item Next we show that condition (1) implies condition
%% (2). Indeed, Corollary \ref{coro:adjunction} guarantees that $Q
%% \subseteq (P \rightarrow Q) \circ P$ and in the presence of (1) we
%% deduce that $Q \subseteq (P \rightarrow Q) \circ P \subseteq (P
%% \rightarrow Q) \diamond P$.   
 \item Now we prove that condition (1) implies condition (3). Using
   the ``half adjunction property'' from the hypothesis of (3): $(Q
   \rightarrow R) \subseteq P$ we deduce that $R \subseteq P \circ Q$
   and using (1) we prove that $R \subseteq P \circ Q \subseteq P
   \diamond Q$.   
\item Next we prove that (3) implies (2). Consider the equality $(P
  \rightarrow Q)=(P \rightarrow Q)$ and using (3) deduce that $Q
  \subseteq (P \rightarrow Q) \diamond P$ that is exactly the
  statement of (2).  
%% \item 

%% Next we prove that conditions (1) and (2) are equivalent. Observe that $({}^\perp P \rightarrow {}^\perp Q){}^\perp= \bigcap\{R: {}^\perp R {}^\perp P \subseteq {}^\perp Q\}$, hence to deduce (2) from (1) we need to verify that ${}^\perp (P \rightarrow Q){}^\perp P \subseteq {}^\perp Q$. Condition (1) guarantees that ${}^\perp (P \rightarrow Q){}^\perp P \subseteq {}^\perp ((P \rightarrow Q) \circ P)\subseteq {}^\perp Q$. 

%% To prove that (2) implies (1) we proceed as follows. The assertion (1), i.e. that ${}^\perp P {}^\perp Q \subseteq {}^\perp(P \circ Q)$ is equivalent to the assertion that  ${}^\perp P \subseteq {}^\perp Q \rightarrow {}^\perp(P \circ Q)$, that is equivalent to    $({}^\perp Q \rightarrow {}^\perp(P \circ Q))^\perp \subseteq P$, that is what we need to prove. Using (2) we see that $({}^\perp Q \rightarrow {}^\perp(P \circ Q))^\perp \subseteq Q \rightarrow (P \circ Q)=\Big({}^\perp\big({}^\perp Q. (P\circ Q)\big)\Big)^{\perp}$. To finish, we prove that $\Big({}^\perp\big({}^\perp Q. (P\circ Q)\big)\Big)^{\perp} \subset P$ or equivalently that ${}^\perp P \subseteq {}^{\perp}({}^\perp Q. (P\circ Q))$.

%% It follows by definition that ${}^\perp Q. (P\circ Q) \subseteq P$,
%% and then the the result follows.  

\end{itemize}
\item Observe that both versions of condition (1) are equivalent. We
  prove first that our condition (1) implies the original condition
  (S2). Assume that $t \perp \pi$, we want to show that for all $s \in
  \Lambda$ we have that $K \perp (t.s.\pi)$. Call
  $P=({}^\perp\{\pi\})^\perp$ and $R=\{s\}^\perp$. From the assertion
  that $K \perp {}^\perp P.{}^\perp R.P$ as $t \in {}^\perp P,\, s \in
  {}^\perp R$ and $\pi \in P$ we conclude that $K \perp (t.s.\pi)$.  

Conversely, suppose the take the subset ${}^\perp P.{}^\perp R.P
\subseteq \Pi$ and we want to prove that for all $t \perp P$, $s \perp
R$ and $\pi \in P$, $K \perp t.s.\pi$. As $t \perp \pi$ from the
original condition (S2) we deduce that $K \perp (t.s.\pi)$ that is
exactly what we needed to prove.   The fact that condition (1) implies
condition (2) is a direct consequence of the axiom (S1) of a $\mathcal
{PAKS}$.  
\item The proof of this part uses the same methods than the previous one.
\item Axiom (S5) can be written as the assertion:
  $\operatorname{save}(P) \subseteq {}^\perp({}^\perp P.Q)={}^\perp(P
  \rightarrow Q)$ for all $P, Q$ and axiom (S4) can be written as the
  assertion: $\operatorname{cc} \in
  {}^\perp\Big({}^\perp\big(\operatorname{save}(P).P\big).P\Big)=
  {}^\perp\Big(\big(\operatorname{save}(P).P\big)   
  \rightarrow P\Big)$ for all $P$.  

Putting this together, we obtain that: \[\operatorname{cc} \in
{}^\perp\Big(\big(\operatorname{save}(P).P\big) \rightarrow P\Big) \subseteq
{}^\perp\Big(\big({}^\perp(P \rightarrow Q).P\big) \rightarrow P\Big)
\subseteq {}^\perp\Big(\big((P \rightarrow Q) \rightarrow P\big)
\rightarrow P\Big) \mbox{ for all $P$, $Q$}\in\mathcal{P}_{\perp}(\Pi).\] 
%\item The proof is similar. 
\end{list}
\end{proof}
%% \begin{obse} For the sake of completeness, we present a version of the above characterization, that is expressed in terms of elements of $\mathcal P_\perp(\Lambda)$. Below we assume that $L,M,T \in \mathcal P_\perp(\Lambda)$ and that $P \in \mathcal P_\perp(\Pi)$.
%% \begin{enumerate}
%% \item Condition (S1) can be written as:
%% \begin{enumerate}  
%% \item $\{\pi \in \Pi: L \subseteq (M.\pi)^\perp\}=\{\pi \in \Pi: M.\pi \subseteq L^\perp\} \subseteq (LM)^\perp$.
%% \item $({}^\perp\{\pi \in \Pi: L \subseteq (M.\pi)^\perp\})^{\perp}=({}^\perp\{\pi \in \Pi: M.\pi \subseteq L^\perp\})^{\perp} \subseteq (LM)^\perp$.
%% \end{enumerate}
%% \item Condition (S2) can be written as: if $L \subseteq {}^\perp P$, then $\operatorname{K} \in {}^\perp(L.M.P)$; and implies that: if $L \subseteq {}^\perp P$, then $\operatorname{K}LM \subseteq {}^\perp P$. 
%% \item Condition (S3) can be written as: if $Lu(Mu) \subseteq {}^\perp P$ then $\operatorname{S} \in {}^\perp(L.M.u.P)$, for $u \in \Lambda$; and implies that $Lu(Mu) \subseteq {}^\perp P$ then $\operatorname{S}LMu \subseteq {}^\perp P$, for $u \in \Lambda$. 
%% \item Condition (S4) can be written as: if $L \subseteq {}^\perp\{k_\pi . \pi\}$, then $\operatorname{cc}L \in {}^\perp\{\pi\}$, for $\pi \in \Pi$.

%% \end{enumerate}
%% \end{obse}
\item In accordance with Theorem \ref{theo:adjunction} (\emph{half
  adjunction property}) we have that: if $Q \rightarrow R \subseteq
  P$, then $R \subseteq P \circ Q$. In search of a version of a
  converse to this result--i.e to obtain the other ``half'' of the
  adjunction, we introduce the so called ``\emph{$\operatorname E$
    operator}'' and the associated ``\emph{$\operatorname{S}\eta$
    rule}''. 
\begin{theo} In a $\mathcal{PAKS}$ if $t,s \in \Lambda$ we have
  that:\[ts \perp \pi \Rightarrow \operatorname{S}(\operatorname
  {K}(\operatorname{S}\operatorname {K}\operatorname {K}))t \perp
  s.\pi.\] 
\end{theo}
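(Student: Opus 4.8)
The statement asserts that a particular combinator term, built from $\operatorname{S}$ and $\operatorname{K}$, behaves like a ``$\operatorname{S}\eta$'' operator: from $ts \perp \pi$ one must derive $\operatorname{S}(\operatorname{K}(\operatorname{S}\operatorname{K}\operatorname{K}))t \perp s.\pi$. The plan is to compute the process $\operatorname{S}(\operatorname{K}(\operatorname{S}\operatorname{K}\operatorname{K}))t \star s.\pi$ by repeatedly unwinding the combinator axioms (S1), (S2), (S3) read in the ``$\perp$'' form of Section \ref{item:axiomsperp}, peeling arguments off the stack one at a time until the process reduces to something manifestly implied by the hypothesis $ts \perp \pi$.

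First I would recall that $\operatorname{S}\operatorname{K}\operatorname{K}$ is the usual realizer of the identity: by Observation \ref{obse:weakercond}(2), if $ru(vu) \perp \rho$ then $\operatorname{S}rvu \perp \rho$; taking $r=v=\operatorname{K}$ gives, for any $u$, that $\operatorname{K}u(\operatorname{K}u) \perp \rho \Rightarrow \operatorname{S}\operatorname{K}\operatorname{K}u \perp \rho$, and by Observation \ref{obse:weakercond}(1) applied to $\operatorname{K}u$, whenever $u \perp \rho$ we get $\operatorname{K}u(\operatorname{K}u)\perp\rho$ (first $\operatorname{K}u w \perp \rho$ for all $w$, then specialize $w = \operatorname{K}u$). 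Hence $u \perp \rho \Rightarrow \operatorname{S}\operatorname{K}\operatorname{K}u \perp \rho$, i.e. $\operatorname{S}\operatorname{K}\operatorname{K}$ acts as an identity on realizers. Next I would apply the $\operatorname{S}$-axiom (S3) in $\perp$-form to the head $\operatorname{S}(\operatorname{K}(\operatorname{S}\operatorname{K}\operatorname{K}))t$: writing the term as $\operatorname{S}\,a\,t$ with $a = \operatorname{K}(\operatorname{S}\operatorname{K}\operatorname{K})$, axiom (S3) says it suffices to show $a s (t s) \perp \pi$ — here $u$ in the axiom is instantiated to $s$, so $\operatorname{S}ats \perp s.\pi$ would follow from $a(s)\,(t(s)) \star \pi \in \Perp$, but we want the stack $s.\pi$, so actually I first use (S1) to move $s$: since $\operatorname{S}atss = (\operatorname{S}ats)s$ and $\operatorname{S}ats \perp s.\pi$ reduces (via S1) to $\operatorname{S}atss \perp \pi$, and then (S3) with $u := s$ reduces $\operatorname{S}ats s \perp \pi$ — wait, more carefully: (S3) states $\operatorname{S} \star a \cdot s \cdot u \cdot \pi \in \Perp$ whenever $au(su) \star \pi \in \Perp$; here I want the four-element stack $a \cdot s \cdot \pi$ after pushing, so I take the instance with second combinator-argument $s$, third argument... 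The clean route is: by (S1), $\operatorname{S}(\operatorname{K}(\operatorname{S}\operatorname{K}\operatorname{K}))t \perp s.\pi$ iff $\operatorname{S}(\operatorname{K}(\operatorname{S}\operatorname{K}\operatorname{K}))ts \perp \pi$; now this is $\operatorname{S} \star a \cdot t \cdot s \cdot \pi$-shaped after two more uses of (S1), so (S3) tells me it is enough that $a s (t s) \perp \pi$, that is $\operatorname{K}(\operatorname{S}\operatorname{K}\operatorname{K})\,s\,(ts) \perp \pi$.

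Finally, $\operatorname{K}(\operatorname{S}\operatorname{K}\operatorname{K})\,s\,(ts) \perp \pi$: by the $\operatorname{K}$-consequence Observation \ref{obse:weakercond}(1), $\operatorname{K}vw \perp \rho$ whenever $v \perp \rho$ (with here $v = \operatorname{S}\operatorname{K}\operatorname{K}$, $w = s$, and $\rho$ the stack $(ts).\pi$ after one application of (S1)); so it reduces to $(\operatorname{S}\operatorname{K}\operatorname{K})(ts) \perp \pi$, which by the identity property of $\operatorname{S}\operatorname{K}\operatorname{K}$ established above follows from $ts \perp \pi$ — exactly our hypothesis. Assembling the chain of implications backwards gives the claim. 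The main obstacle is purely bookkeeping: keeping track of how many times to invoke (S1) to shuttle arguments between the ``term side'' and the ``stack side'' of $\perp$, since each combinator axiom is stated with the arguments already pushed onto the stack while the reductions naturally happen on curried applications; a secondary subtlety is that (S1) is only an implication (not an equivalence) unless the $\mathcal{PAKS}$ is strong, but one checks that every use here is in the direction ``$ts\star\pi \Rightarrow$ stack form'' $\leftarrow$ actually in the direction provided by (S1) as stated, so strongness is not needed.
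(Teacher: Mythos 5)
Your overall skeleton (an identity lemma for $\operatorname{SKK}$, then a $\operatorname{K}$-step, then an $\operatorname{S}$-step via (S3)) matches the paper's two-step proof, but as written your argument invokes (S1) in the forbidden direction at two places, and the second of these is a genuine gap. The theorem is precisely a weak converse of (S1): in a general (non-strong) $\mathcal{PAKS}$ you may pass from $t \perp s.\pi$ to $ts \perp \pi$, never back. Your ``clean route'' opens with ``by (S1), $\operatorname{S}(\operatorname{K}(\operatorname{SKK}))t \perp s.\pi$ iff $\operatorname{S}(\operatorname{K}(\operatorname{SKK}))ts \perp \pi$''; the implication you actually need there is the converse one, available only when the $\mathcal{PAKS}$ is strong. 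This first misuse happens to be harmless: once you have $\operatorname{K}(\operatorname{SKK})s(ts) \perp \pi$, axiom (S3) gives $\operatorname{S} \perp \operatorname{K}(\operatorname{SKK}).t.s.\pi$ and two legitimate applications of (S1) land directly on the goal $\operatorname{S}(\operatorname{K}(\operatorname{SKK}))t \perp s.\pi$, with no need to visit $\operatorname{S}(\operatorname{K}(\operatorname{SKK}))ts \perp \pi$ at all.

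The real gap is in the identity lemma. You prove it in applied form, $u \perp \rho \Rightarrow \operatorname{SKK}u \perp \rho$, but what the $\operatorname{K}$-step needs is the stack form $u \perp \rho \Rightarrow \operatorname{SKK} \perp u.\rho$: to obtain $\operatorname{K}(\operatorname{SKK})s(ts) \perp \pi$ you must feed (S2) (or Observation \ref{obse:weakercond}(1) at the stack $(ts).\pi$) with the premise $\operatorname{SKK} \perp (ts).\pi$, because the weak consequences of Observation \ref{obse:weakercond} never change the stack and so cannot by themselves produce the three-argument application $\operatorname{K}(\operatorname{SKK})s(ts)$. Your bridge -- ``so it reduces to $\operatorname{SKK}(ts) \perp \pi$'' -- is again (S1) read backwards, and there is no way to recover the stack-form statement from the applied one without strongness; hence your closing claim that every use of (S1) is in the stated direction does not hold for the write-up. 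The repair is exactly the paper's first step: prove $u \perp \rho \Rightarrow \operatorname{SKK} \perp u.\rho$ directly, via $\operatorname{K} \perp u.(\operatorname{K}u).\rho$ by (S2) (instantiating the universally quantified term as $\operatorname{K}u$), then $(\operatorname{K}u)(\operatorname{K}u) \perp \rho$ by iterated (S1), then $\operatorname{S} \perp \operatorname{K}.\operatorname{K}.u.\rho$ by (S3), then $\operatorname{SKK} \perp u.\rho$ by iterated (S1). With that lemma the whole proof runs forward from $ts \perp \pi$: $\operatorname{SKK} \perp (ts).\pi$, then $\operatorname{K} \perp \operatorname{SKK}.s.(ts).\pi$ by (S2), then $\operatorname{K}(\operatorname{SKK})s(ts) \perp \pi$ by iterated (S1), then $\operatorname{S} \perp \operatorname{K}(\operatorname{SKK}).t.s.\pi$ by (S3), and finally $\operatorname{S}(\operatorname{K}(\operatorname{SKK}))t \perp s.\pi$ by (S1) twice, every use of (S1) now being in its stated direction.
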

\begin{proof} The proof is performed in two steps.
\begin{enumerate}
\item If $t \perp \pi$, then $\operatorname S\operatorname{K}\operatorname{K} \perp t . \pi$. 
Indeed: \[t \perp \pi \Rightarrow \operatorname{K} \perp
t.(\operatorname{K}t).\pi \Rightarrow
(\operatorname{K}t)(\operatorname{K}t) \perp \pi \Rightarrow
\operatorname{S} \perp \operatorname{K}.\operatorname{K}.t.\pi
\Rightarrow \operatorname{S}\operatorname{K}\operatorname{K} \perp
t.\pi.\] The validity of the succesive implications come by respective
application of the following axioms
{\Large{\bf{\ref{item:axiomsperp}.}}} (S2),(S1),(S3), and (S1) in that
order.  
\item If $ts \perp \pi$, then $\operatorname{S}(\operatorname {K}(\operatorname{S}\operatorname {K}\operatorname {K}))t \perp s.\pi$.
The following chain of implications proves the result: 
\begin{eqnarray}
\nonumber ts\perp \pi \Rightarrow
\operatorname{S}\operatorname{K}\operatorname{K} \perp ts.\pi
\Rightarrow \operatorname{K}\perp
\operatorname{SKK}.s.ts.\pi\Rightarrow
\operatorname{K}(\operatorname{SKK})s(ts)\perp \pi \\\nonumber
\operatorname{K}(\operatorname{SKK})s(ts)\perp \pi \Rightarrow
\operatorname{S}\perp (\operatorname{K}(\operatorname{SKK})).t.s.\pi
\Rightarrow \operatorname{S}(\operatorname{K}(\operatorname{SKK}))t
\perp s.\pi. 
\end{eqnarray} The list of the axioms or results used at each
respective implication is: Part (1) above,
{\Large{\bf{\ref{item:axiomsperp}.}}} (S2), (S1), (S3) and (S1).   
\end{enumerate}
\end{proof}
\begin{defi} The special elements of $\Lambda$ considered above are
  abbreviated as follows: \[\operatorname{I}=\operatorname{SKK} \,;\,
  \operatorname{E}=\operatorname{S}(\operatorname{K}\operatorname{I})=\operatorname{S}(\operatorname{K}(\operatorname{SKK})).\] 
\end{defi}

Thus, the $\operatorname{S}\eta$ rule can be formulated as:
\[ts \perp \pi \Rightarrow \operatorname{E}t \perp s.\pi.\]

Next we present a set theoretical characterization of the $\operatorname{S}\eta$ rule that can be proved easily.
\begin{lema}\label{lema:setheoSeta}
\begin{enumerate}
\item A combinator $\widehat{\operatorname{E}}$ satisfies the $\operatorname{S}\eta$ rule --i.e. $ts \perp \pi \Rightarrow \widehat{\operatorname{E}}t \perp s.\pi$-- if and only if satisfies any of the the assertions that follow. 
\begin{equation}\label{eqn:setados} \text{If}\,\, P,Q \in \mathcal P_\perp(\Pi)\,\, \text{then}\,\, P \diamond Q \subseteq \{\pi \in \Pi: \widehat{\operatorname{E}}{}^\perp P \subseteq {}^\perp({}^\perp Q. \pi)\}=\{\pi \in \Pi: (\widehat{\operatorname{E}}{}^\perp P)^\perp \supseteq ({}^\perp Q. \pi)\}.
\end{equation}
\begin{equation}\label{eqn:setatres} \text{If}\,\, P,Q \in \mathcal P_\perp(\Pi)\,\, \text{then}\,\, \widehat{\operatorname{E}}{}^\perp P \subseteq {}^\perp\big({}^\perp Q.(P \diamond Q) \big). 
\end{equation} 
\begin{equation}\label{eqn:seta3.1} \text{If}\,\, R \subseteq (P \diamond Q),\, \text{with}\,\, P, Q, R \in P_\perp(\Pi)\,\, \text{then}\,\, \widehat{\operatorname{E}}{}^\perp P \subseteq {}^\perp({}^\perp Q. R).\footnote{In terms of subsets of $\mathcal P_\perp(\Lambda)$ it can be formulated as: $LL' \subseteq M \in \mathcal P_\perp(\Lambda)$, then $\widehat{\operatorname{E}}L \subseteq {}^\perp(L'.M^\perp)$.}
\end{equation}
\item If the combinator $\widehat{\operatorname{E}}$ satisfies the $\operatorname{S}\eta$ rule then, the assertions that follow --see the notations of Definition \ref{defi:maps}-- are valid.
\begin{equation}\label{eqn:setacuatro} \text{If}\,\, P,Q \in \mathcal P_\perp(\Pi)\,\, \text{then}\,\, \widehat{\operatorname{E}}({}^\perp({}^\perp P . Q)) \subseteq {}^\perp({}^\perp P . Q)\,\, \text{or equivalently}\,\, \widehat{\operatorname{E}}({}^\perp(P \rightarrow Q)) \subseteq {}^\perp(P \rightarrow Q).
\end{equation}
\begin{equation}\label{eqn:setauno}\big(t({}^\perp P)\big)^\perp \subseteq \{\pi \in \Pi: (\widehat{\operatorname{E}}t)^\perp \supseteq ( {}^\perp P. \pi)\} \subseteq ({}^\perp\{\pi \in \Pi: (\widehat{\operatorname{E}}t)^\perp \supseteq ( {}^\perp P. \pi)\})^{\perp}=(\widehat{\operatorname{E}}t)^\perp \circ P.
\end{equation}
\begin{equation}\label{eqn:seta3.5}\text{If}\,\, P,Q \in \mathcal P_\perp(\Pi)\,\, \text{then}\,\,(P \diamond Q) \subseteq (\widehat{\operatorname{E}}({}^\perp P))^\perp \circ Q.
\end{equation} 
\end{enumerate}
\end{lema}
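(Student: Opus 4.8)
The plan is to treat part (1) as a cycle of implications $\operatorname{S}\eta \Rightarrow \eqref{eqn:setados} \Rightarrow \eqref{eqn:setatres} \Rightarrow \eqref{eqn:seta3.1} \Rightarrow \operatorname{S}\eta$, and part (2) as three essentially independent direct consequences of the $\operatorname{S}\eta$ rule (taken in any of the equivalent forms of part (1)). The only machinery needed is the behaviour of the two $\perp$-operators recorded in Observations \ref{obse:initialrl} and \ref{obse:repeatedly} --- antimonotonicity, the identity ${}^\perp(\bigcup_i S_i)=\bigcap_i {}^\perp S_i$, the expansion $S\subseteq({}^\perp S)^\perp$, the double-perpendicular simplifications, and the adjunction $L\subseteq {}^\perp S\Leftrightarrow S\subseteq L^\perp$ of Observation \ref{obse:repeatedly}(1) --- together with the definitions of $\diamond$, $\circ$ and $\rightarrow$ (Definitions \ref{defi:maps} and \ref{defi:maps2}), the monotonicity of the set-level operations $(L,P)\mapsto L.P$ and $(L,M)\mapsto LM$ in each variable, and axiom (S1) of a $\mathcal{PAKS}$ (Definition \ref{defi:aks}).

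For $\operatorname{S}\eta\Rightarrow\eqref{eqn:setados}$ I would unfold $P\diamond Q=(({}^\perp P)({}^\perp Q))^\perp$: a stack $\pi$ lies in $P\diamond Q$ exactly when $r\ell\perp\pi$ for every $r\in{}^\perp P$ and $\ell\in{}^\perp Q$; applying the $\operatorname{S}\eta$ rule to each such pair turns this into $\widehat{\operatorname{E}}r\perp\ell.\pi$ for all $r\in{}^\perp P$, $\ell\in{}^\perp Q$, i.e. $\widehat{\operatorname{E}}\,{}^\perp P\subseteq{}^\perp({}^\perp Q.\pi)$, and the stated equality of the two descriptions of the target set is precisely Observation \ref{obse:repeatedly}(1) with $L=\widehat{\operatorname{E}}\,{}^\perp P$ and $S={}^\perp Q.\pi$. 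Then $\eqref{eqn:setados}\Rightarrow\eqref{eqn:setatres}$ comes from intersecting the inclusion $\widehat{\operatorname{E}}\,{}^\perp P\subseteq{}^\perp({}^\perp Q.\pi)$ over all $\pi\in P\diamond Q$ and using ${}^\perp(\bigcup_\pi{}^\perp Q.\pi)=\bigcap_\pi{}^\perp({}^\perp Q.\pi)$ together with $\bigcup_{\pi\in P\diamond Q}{}^\perp Q.\pi={}^\perp Q.(P\diamond Q)$. Finally $\eqref{eqn:setatres}\Rightarrow\eqref{eqn:seta3.1}$ is immediate: from $R\subseteq P\diamond Q$ one gets ${}^\perp Q.R\subseteq{}^\perp Q.(P\diamond Q)$ by monotonicity of the push action, hence ${}^\perp({}^\perp Q.(P\diamond Q))\subseteq{}^\perp({}^\perp Q.R)$ by antimonotonicity of ${}^\perp(-)$, which chains onto \eqref{eqn:setatres}.

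The delicate step is the closing implication $\eqref{eqn:seta3.1}\Rightarrow\operatorname{S}\eta$, and here I would use the reformulation of \eqref{eqn:seta3.1} in terms of subsets of $\Lambda$ (the footnoted version): if $LL'\subseteq M$ with $M$ invariant under double perpendicularity, then $\widehat{\operatorname{E}}L\subseteq{}^\perp(L'.M^\perp)$. Given $ts\perp\pi$, I would instantiate this with $L=\{t\}$, $L'=\{s\}$ and $M={}^\perp(\{ts\}^\perp)$: then $M\in\mathcal P_\perp(\Lambda)$, the product of singletons is $LL'=\{ts\}$, and $\{ts\}\subseteq{}^\perp(\{ts\}^\perp)=M$ by Observation \ref{obse:initialrl}(3), so the rule gives $\widehat{\operatorname{E}}t\in{}^\perp(\{s\}.M^\perp)$; since $M^\perp=({}^\perp(\{ts\}^\perp))^\perp=\{ts\}^\perp$ by Observation \ref{obse:initialrl}(5) and $\pi\in\{ts\}^\perp$, this reads $\widehat{\operatorname{E}}t\perp s.\pi$, as wanted. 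The hard part here --- and the reason the $\Lambda$-formulation is the one that genuinely recaptures $\operatorname{S}\eta$ --- is that the operation $\diamond$ built purely from $\perp$-closed data only sees $\{t\}^\perp\diamond\{s\}^\perp\subseteq\{ts\}^\perp$, in general a proper inclusion (cf. Observation \ref{obse:simplification}(2)), so an argument phrased solely with $P,Q\in\mathcal P_\perp(\Pi)$ would deliver $\widehat{\operatorname{E}}t\perp s.\pi$ only for $\pi$ inside the smaller set; one must be allowed to feed the (not necessarily $\perp$-closed) singletons $\{t\}$, $\{s\}$ into the rule.

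For part (2), each item follows once $\operatorname{S}\eta$ is available. For \eqref{eqn:setacuatro} I would first note, using Observation \ref{obse:initialrl}(5), that ${}^\perp(P\rightarrow Q)={}^\perp({}^\perp P.Q)$; if $u$ lies in this set then $u\perp r.\rho$ for all $r\in{}^\perp P$, $\rho\in Q$, hence $ur\perp\rho$ by axiom (S1), hence $\widehat{\operatorname{E}}u\perp r.\rho$ by $\operatorname{S}\eta$, i.e. $\widehat{\operatorname{E}}u\in{}^\perp({}^\perp P.Q)$. For \eqref{eqn:setauno}, the first inclusion is the $\operatorname{S}\eta$ rule applied pointwise over $r\in{}^\perp P$ (reading $(t({}^\perp P))^\perp$ as the set of $\pi$ with $tr\perp\pi$ for all such $r$), and the second inclusion is the expansion $S\subseteq({}^\perp S)^\perp$ of Observation \ref{obse:initialrl}(3) combined with the definition of $\circ$ in Definition \ref{defi:maps}(1). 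Lastly \eqref{eqn:seta3.5} is obtained by rewriting \eqref{eqn:setados} as $P\diamond Q\subseteq\{\pi: {}^\perp Q.\pi\subseteq(\widehat{\operatorname{E}}({}^\perp P))^\perp\}$ and applying once more $S\subseteq({}^\perp S)^\perp$ together with the definition of $\circ$. Apart from the converse half of part (1) discussed above, the whole argument is bookkeeping with the two $\perp$-operators.
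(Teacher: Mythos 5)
Part (2) and the implications $\operatorname{S}\eta\Rightarrow\eqref{eqn:setados}\Rightarrow\eqref{eqn:setatres}\Rightarrow\eqref{eqn:seta3.1}$ in your proposal are correct and amount to the same bookkeeping the paper carries out. The genuine gap is in your closing implication $\eqref{eqn:seta3.1}\Rightarrow\operatorname{S}\eta$. The footnoted statement is a faithful \emph{reformulation} of \eqref{eqn:seta3.1} only when all three sets are $\perp$--closed: translating $R\subseteq P\diamond Q$ through Observation \ref{obse:repeatedly}, (1), turns \eqref{eqn:seta3.1} into ``for all $L,L',M\in\mathcal P_\perp(\Lambda)$, if $LL'\subseteq M$ then $\widehat{\operatorname{E}}L\subseteq{}^\perp(L'.M^\perp)$'', with $L={}^\perp P$, $L'={}^\perp Q$, $M={}^\perp R$. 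Your instantiation $L=\{t\}$, $L'=\{s\}$ uses the variant in which $L,L'$ range over \emph{arbitrary} subsets of $\Lambda$, and that variant is strictly stronger: to deduce it from \eqref{eqn:seta3.1} you must replace $\{t\},\{s\}$ by their closures ${}^\perp(\{t\}^\perp),{}^\perp(\{s\}^\perp)$, and the hypothesis you would then need is ${}^\perp(\{t\}^\perp)\,{}^\perp(\{s\}^\perp)\subseteq{}^\perp(\{ts\}^\perp)$ -- equivalently $\pi\in\{t\}^\perp\diamond\{s\}^\perp$ rather than merely $\pi\in\{ts\}^\perp$ -- which is exactly the possibly proper inclusion of Observation \ref{obse:simplification} that you yourself invoke. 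So as written your cycle proves the equivalence of $\operatorname{S}\eta$ with the arbitrary-$L,L'$ statement, but not the implication from \eqref{eqn:seta3.1} as stated (quantified over $\mathcal P_\perp(\Pi)$) back to $\operatorname{S}\eta$; the loop does not close.

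It is only fair to add that you have put your finger on a real subtlety: the paper's own proof of this direction sets $P=\{t\}^\perp$, $Q=\{s\}^\perp$ and asserts $\pi\in P\diamond Q$ directly from $ts\perp\pi$, i.e.\ it elides the same distinction between $\{ts\}^\perp$ and $\{t\}^\perp\diamond\{s\}^\perp$ that worries you. But your workaround does not repair this; it trades the elision for the unproved assumption that the footnoted rule may be fed non-$\perp$--closed arguments. To make the argument complete you would have to either derive the $\Lambda$--level statement with arbitrary $L,L'$ from the $\mathcal P_\perp(\Pi)$--level one (which is precisely the point in doubt), or take the arbitrary-$L,L'$ statement itself as the assertion equivalent to $\operatorname{S}\eta$ in part (1) -- under that reading your proof goes through.
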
 
\begin{proof} 
\begin{enumerate}
\item  
\begin{itemize}
\item It is clear that the assertions \eqref{eqn:setados},\eqref{eqn:setatres},\eqref{eqn:seta3.1} are all equivalent. 

\item The inclusion \eqref{eqn:setados} is equivalent to the assertion: $\forall s,t,\pi$, $ts \perp \pi \Rightarrow \widehat{\operatorname{E}}t \perp s.\pi$. 

Assume that $ts \perp \pi$ and call $P=\{t\}^\perp$ and $Q=\{s\}^\perp$. Clearly $ts \in {}^\perp P {}^\perp Q$ and then $\pi \in P\diamond Q$ and in the situation that the inclusion \eqref{eqn:setados} is valid, we deduce that $\widehat{\operatorname{E}}({}^\perp P) \subseteq {}^\perp({}^\perp Q. \pi)$. As $t \in {}^\perp P$ and $s \in {}^\perp Q$, we obtain that $\widehat{\operatorname{E}}t \perp s.\pi$.  The converse can be proved by reversing the above argument.  

\end{itemize}
\item 
\begin{itemize}
\item For the proof of the fact the $\operatorname{S}\eta$\,\, rule implies the inclusion \eqref{eqn:setacuatro} we proceed as follows.  

Assume that $t \in {}^\perp({}^\perp P . Q)$, then $t \perp s.\pi$ for all $s \in {}^\perp P$ and $\pi \in Q$. In this situation we deduce that $ts \perp \pi$ and applying the $\operatorname{S}\eta$ rule we deduce that 
$\widehat{\operatorname{E}}t \perp s.\pi$. This means that $\widehat{\operatorname{E}} t \in {}^\perp({}^\perp P . Q)$. 
\item Next we show that the the $\operatorname{S}\eta$ rule implies the inclusion \eqref{eqn:setauno}.

Assume as hypothesis the validity of the $\operatorname{S}\eta$ rule. Take $\pi \in \big(t({}^\perp P)\big)^\perp$--i.e. assume that for all $s \perp P$, $ts \perp \pi$. Using the hypothesis we deduce that for all $s \perp P$ we have that $\widehat{\operatorname{E}}t \perp s.\pi$ and that means that ${}^\perp P.\pi \subseteq (\widehat{\operatorname{E}}t)^\perp$ and that implies that the inclusion \eqref{eqn:setauno} is valid. 
\item The validity of \eqref{eqn:seta3.5} is a consequence of the
  following chain of inclusions --the first one is just the inclusion
  \eqref{eqn:setados}--:
\begin{center}
$P \diamond Q \subseteq \{\pi \in \Pi:
(\widehat{\operatorname{E}}({}^\perp P))^\perp \supseteq ({}^\perp
Q. \pi)\} \subseteq \big({}^\perp\{\pi \in \Pi:
(\widehat{\operatorname{E}}({}^\perp P))^\perp \supseteq ({}^\perp
Q. \pi)\}\big)^\perp= (\widehat{\operatorname{E}}({}^\perp P))^\perp
\circ Q.$ 
\end{center}
\end{itemize}
\end{enumerate}
\end{proof}
\begin{coro}\label{coro:adjunctorpaks}For all $P \in \mathcal P_\perp(\Pi)$ and for $\operatorname{E}$ as before, we have that:
\begin{equation}\label{eqn:seta1.5} (E{}(^\perp P))^\perp \subseteq (\operatorname{EE})^\perp \circ P.
\end{equation}
\end{coro}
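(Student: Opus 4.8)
The plan is to obtain \eqref{eqn:seta1.5} as a direct instance of Lemma \ref{lema:setheoSeta}, part (2). The only fact needed as input is that $\operatorname{E}=\operatorname{S}(\operatorname{K}\operatorname{I})$ satisfies the $\operatorname{S}\eta$ rule, i.e. $ts \perp \pi \Rightarrow \operatorname{E}t \perp s.\pi$; this is precisely the content of the Theorem preceding the definition of $\operatorname{I}$ and $\operatorname{E}$. With this, the hypothesis of Lemma \ref{lema:setheoSeta}(2) is met with the choice $\widehat{\operatorname{E}}:=\operatorname{E}$.

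First I would instantiate the chain of inclusions \eqref{eqn:setauno} at $\widehat{\operatorname{E}}=\operatorname{E}$. That chain is valid for every $t \in \Lambda$, so in particular for $t=\operatorname{E}$. Reading off the leftmost and rightmost members of \eqref{eqn:setauno}: the left-hand side becomes $\big(\operatorname{E}({}^\perp P)\big)^\perp$, and — using the left-association convention so that $(\widehat{\operatorname{E}}t)^\perp=(\operatorname{E}\operatorname{E})^\perp$ — the right-hand side becomes $(\operatorname{E}\operatorname{E})^\perp \circ P$. This yields exactly $\big(\operatorname{E}({}^\perp P)\big)^\perp \subseteq (\operatorname{E}\operatorname{E})^\perp \circ P$, which is the asserted inclusion.

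If one prefers to avoid citing \eqref{eqn:setauno}, the same argument can be unfolded directly: fix $\pi \in \big(\operatorname{E}({}^\perp P)\big)^\perp$, so that $\operatorname{E}u \perp \pi$ for every $u \in {}^\perp P$; by the $\operatorname{S}\eta$ rule applied to $t=\operatorname{E}$ this gives $\operatorname{E}\operatorname{E} \perp u.\pi$ for every such $u$, i.e. ${}^\perp P.\pi \subseteq (\operatorname{E}\operatorname{E})^\perp$; hence $\pi$ lies in $\{\pi' \in \Pi: {}^\perp P.\pi' \subseteq (\operatorname{E}\operatorname{E})^\perp\}$, which is contained in its own double perpendicular $({}^\perp\{\cdots\})^\perp=(\operatorname{E}\operatorname{E})^\perp \circ P$ by the definition of $\circ$ in Definition \ref{defi:maps}. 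For this to be meaningful one notes that both $(\operatorname{E}\operatorname{E})^\perp$ and $P$ lie in $\mathcal P_\perp(\Pi)$: the former because any set of the form $L^\perp$ is fixed by double perpendicularity (Observation \ref{obse:initialrl}), the latter by hypothesis.

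I do not expect a genuine obstacle here; the corollary is essentially a repackaging of \eqref{eqn:setauno}. The one point requiring care is purely notational bookkeeping: keeping the roles of the ``meta'' combinator $\widehat{\operatorname{E}}$ and of the free term $t$ in \eqref{eqn:setauno} separate, then collapsing them both to $\operatorname{E}$, and checking that the resulting compound term is $\operatorname{E}\operatorname{E}=(\operatorname{E}\operatorname{E})$ and not some mis-associated expression.
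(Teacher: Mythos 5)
Your proposal is correct and is exactly the paper's argument: the paper also proves the corollary simply as the instance of \eqref{eqn:setauno} with $t=\operatorname{E}$ (and $\widehat{\operatorname{E}}=\operatorname{E}$, which satisfies the $\operatorname{S}\eta$ rule by the preceding theorem). Your unfolded version is a faithful expansion of that same instantiation, not a different route.
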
 
\begin{proof}This assertion is a particular case of \eqref{eqn:setauno} when $t=\operatorname{E}$. 
\end{proof}
\begin{obse}
\begin{enumerate} 
%\item %An equivalent formulation of the $\operatorname{S}-\eta$ rule is the following.
%% \begin{equation}\label{eqn:setauno} \text{If}\,\, L,M \in \mathcal P_\perp(\Lambda)\,\, \text{then}\,\, (LM)^\perp \subseteq \{\pi \in \Pi: \operatorname{E}L \subseteq {}^\perp(M. \pi)\}=\{\pi \in \Pi: (\operatorname{E}L)^\perp \supseteq (M . \pi)\}. 
%% \end{equation}

%Applying the above result to the case of $L=t$ and $M={}^\perp P$, we have that:
\item Another consequence of the $\operatorname{S}\eta$ rule, that follows directly from the above results --see inclusion \eqref{eqn:seta3.5}, as well as Definition \ref{defi:maps}, \eqref{item:maps1}--is the following:
\begin{equation}\label{eqn:setadiamond} \text{If}\,\, P,Q \in \mathcal P_\perp(\Pi)\,\, \text{then}\,\, P \diamond Q \subseteq \Big({}^\perp\big({}^\perp Q \leadsto (\operatorname{E}({}^\perp P))^\perp\big)\Big)^\perp.
\end{equation}
\item Notice that we have proved that the operator $\operatorname{E}$ contracts subsets of $\Lambda$ of the form: ${}^\perp(P \rightarrow Q)$ for $P$ and $Q$ in the corresponding $\mathcal P_\perp(\Pi)$--see property \eqref{eqn:setacuatro}. It does not seem possible to prove that $\operatorname{E}$ contracts all subsets $L$ in $\mathcal P_\perp(\Lambda)$.
\item If we put together the above equation \eqref{eqn:seta3.5} and Lemma \ref{lema:axiomset}, ($\mathbb{S}1$), (1), we obtain:
\begin{equation}
P \circ Q \subseteq P \diamond Q \subseteq (\operatorname{E}({}^\perp P))^\perp \circ Q.
\end{equation} 
\end{enumerate}
\end{obse}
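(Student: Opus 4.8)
The plan is to handle the three assertions of the Observation one at a time; each is a short deduction from facts already established, so the real content is just bookkeeping --- in particular, checking that the sets I feed to the operation $\circ$ of Definition \ref{defi:maps}, \eqref{item:maps1} genuinely lie in $\mathcal P_\perp(\Pi)$, so that the definition applies literally and not merely in the loose sense of the earlier footnote.

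For the inclusion \eqref{eqn:setadiamond} I would begin from \eqref{eqn:seta3.5}. That inclusion is available with $\widehat{\operatorname{E}} = \operatorname{E}$ because the concrete combinator $\operatorname{E} = \operatorname{S}(\operatorname{K}\operatorname{I})$ satisfies the $\operatorname{S}\eta$ rule in every $\mathcal{PAKS}$ --- this is exactly the content of the theorem preceding Lemma \ref{lema:setheoSeta} --- and it yields $P \diamond Q \subseteq (\operatorname{E}({}^\perp P))^\perp \circ Q$ for all $P,Q \in \mathcal P_\perp(\Pi)$. Now $(\operatorname{E}({}^\perp P))^\perp$ is of the form $L^\perp$ and hence belongs to $\mathcal P_\perp(\Pi)$ by Observation \ref{obse:initialrl}, so I may apply Definition \ref{defi:maps}, \eqref{item:maps1} with $(\operatorname{E}({}^\perp P))^\perp$ in place of $P$ to obtain $(\operatorname{E}({}^\perp P))^\perp \circ Q = \big({}^\perp({}^\perp Q \leadsto (\operatorname{E}({}^\perp P))^\perp)\big)^\perp$. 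Substituting this equality into the previous inclusion gives \eqref{eqn:setadiamond} verbatim.

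The second assertion carries no proof obligation: the statement that $\operatorname{E}$ contracts every set of the form ${}^\perp(P \rightarrow Q)$ is precisely \eqref{eqn:setacuatro}, already proved, and the accompanying remark that one should not expect $\operatorname{E}$ to contract all of $\mathcal P_\perp(\Lambda)$ is a comment rather than a claim. For the third assertion I would simply chain two inclusions already in hand: $P \circ Q \subseteq P \diamond Q$ is Lemma \ref{lema:axiomset}, ($\mathbb{S}1$), (1), and $P \diamond Q \subseteq (\operatorname{E}({}^\perp P))^\perp \circ Q$ is once again \eqref{eqn:seta3.5} specialised to $\operatorname{E}$.

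Since every step is either a substitution into a definition or the concatenation of two proved inclusions, I do not anticipate any genuine obstacle. The one point I would be careful to state explicitly is the well-definedness remark above, namely that $(\operatorname{E}({}^\perp P))^\perp \in \mathcal P_\perp(\Pi)$, together with the fact that it is the $\operatorname{S}\eta$ rule for the concrete $\operatorname{E}$ --- not for an abstract $\widehat{\operatorname{E}}$ --- that licenses invoking \eqref{eqn:seta3.5} in this situation.
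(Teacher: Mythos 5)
Your proposal is correct and follows exactly the route the paper indicates for this (unproved) Observation: inclusion \eqref{eqn:setadiamond} is \eqref{eqn:seta3.5} (valid for $\widehat{\operatorname{E}}=\operatorname{E}$ since $\operatorname{E}$ satisfies the $\operatorname{S}\eta$ rule) with the right-hand side unfolded via Definition \ref{defi:maps}, \eqref{item:maps1}, and the third item is the concatenation of Lemma \ref{lema:axiomset}, ($\mathbb{S}1$), (1) with \eqref{eqn:seta3.5}. Your extra check that $(\operatorname{E}({}^\perp P))^\perp \in \mathcal P_\perp(\Pi)$ is a welcome precision but does not change the argument.
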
 
The theorem that follows --that is of importance for future developments--is a partial converse to the half adjunction property of Theorem \ref{theo:adjunction}.
 
\begin{theo} \label{theo:adjunctionconverse} Let $P,Q,R \in \mathcal P_\perp(\Pi)$. If $P \circ Q \supseteq R$ then $\operatorname E {}^\perp P \subseteq {}^\perp(Q \rightarrow R)$. Equivalently, if $P \circ Q \supseteq R$ then $(\operatorname{E}{}^\perp P)^\perp \supseteq (Q \rightarrow R)$.   
\end{theo}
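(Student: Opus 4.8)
The plan is to reduce the statement to the set-theoretic reformulation of the $\operatorname{S}\eta$ rule already recorded in Lemma \ref{lema:setheoSeta}, \eqref{eqn:seta3.1}, after first upgrading the hypothesis $P \circ Q \supseteq R$ into a statement about $\diamond$. The point is that $\circ$ and $\diamond$ differ, but only in one direction, and that one direction is exactly what the hypothesis provides.

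First I would note that we are in a $\mathcal{PAKS}$, so axiom (S1) holds, and hence Lemma \ref{lema:axiomset}, ($\mathbb S1$), (1) applies: $P \circ Q \subseteq ({}^\perp P\,{}^\perp Q)^\perp = P \diamond Q$. Combining this with the hypothesis $R \subseteq P \circ Q$ gives $R \subseteq P \diamond Q$, still with $P,Q,R \in \mathcal P_\perp(\Pi)$. Now the combinator $\operatorname{E} = \operatorname{S}(\operatorname{K}(\operatorname{SKK}))$ satisfies the $\operatorname{S}\eta$ rule — this is the theorem preceding the definition of $\operatorname{E}$ — so the equivalent formulation \eqref{eqn:seta3.1} of Lemma \ref{lema:setheoSeta} applies with $\widehat{\operatorname{E}} = \operatorname{E}$, and from $R \subseteq P \diamond Q$ it yields $\operatorname{E}\,{}^\perp P \subseteq {}^\perp({}^\perp Q . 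R)$. Finally, using the identity ${}^\perp(Q \rightarrow R) = {}^\perp({}^\perp Q . R)$ recorded just after Definition \ref{defi:maps}, this reads $\operatorname{E}\,{}^\perp P \subseteq {}^\perp(Q \rightarrow R)$; applying $(\ )^\perp$ to both sides and using $Q \rightarrow R \in \mathcal P_\perp(\Pi)$ gives the equivalent form $(\operatorname{E}\,{}^\perp P)^\perp \supseteq Q \rightarrow R$, which is the second assertion.

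If one prefers to avoid citing \eqref{eqn:seta3.1} and argue by hand, the same chain can be spelled out: from $R \subseteq ({}^\perp P\,{}^\perp Q)^\perp$, antimonotonicity and the double-perpendicular inclusion ${}^\perp P\,{}^\perp Q \subseteq {}^\perp(({}^\perp P\,{}^\perp Q)^\perp)$ give ${}^\perp P\,{}^\perp Q \subseteq {}^\perp R$; hence for every $t \perp P$, $s \perp Q$ and $\pi \in R$ we have $ts \perp \pi$, and the $\operatorname{S}\eta$ rule then yields $\operatorname{E}t \perp s.\pi$, i.e. $\operatorname{E}t \in {}^\perp({}^\perp Q . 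R)$ for all $t \perp P$, which is the desired inclusion.

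I do not expect a genuine obstacle here; the argument is a short composition of already-established facts. The only point that needs care — and the reason this is a theorem about $\mathcal{PAKS}$'s rather than about bare realizability lattices — is that the step $P \circ Q \subseteq P \diamond Q$ really does use axiom (S1): without it one cannot pass from the hypothesis phrased with $\circ$ to the $\diamond$-form required to feed into the $\operatorname{S}\eta$ machinery, and the conclusion is genuinely about the operations of the $\mathcal{PAKS}$.
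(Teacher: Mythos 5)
Your proposal is correct and follows essentially the same route as the paper: both pass from $R \subseteq P\circ Q$ to $R \subseteq P\diamond Q$ via Lemma \ref{lema:axiomset} ($\mathbb S$1)(1), then invoke the $\operatorname{S}\eta$ machinery of Lemma \ref{lema:setheoSeta} (the paper cites \eqref{eqn:setatres} plus an antimonotonicity step, you cite the equivalent form \eqref{eqn:seta3.1} directly), and finish with ${}^\perp(Q\rightarrow R)={}^\perp({}^\perp Q.R)$ and perpendicularity. Your hand-spelled elementwise variant is just an unfolding of the same argument and is also fine.
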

\begin{proof} 
As $R \subseteq P \circ Q \subseteq P \diamond Q= ({}^\perp P {}^\perp
Q)^\perp$ --see Lemma \ref{lema:axiomset} (S1)
\eqref{item:circdiamond}--, we have that ${}^\perp Q .R \subseteq
      {}^\perp Q. ({}^\perp P {}^\perp Q)^\perp$ and
      ${}^\perp({}^\perp Q .R) \supseteq {}^\perp({}^\perp
      Q. ({}^\perp P {}^\perp Q)^\perp)$. Using the inclusion \eqref{eqn:setatres} we deduce that $\operatorname{E}{}^\perp P \subseteq {}^\perp\big({}^\perp Q.({}^\perp P {}^\perp Q )^\perp \big) \subseteq {}^\perp({}^\perp Q .R)= {}^\perp(Q \rightarrow R)$ that is the inequality we wanted to prove.

Clearly the inequality $\operatorname E {}^\perp P \subseteq {}^\perp(Q \rightarrow R)$ is equivalent to $(\operatorname{E}{}^\perp P)^\perp \supseteq (Q \rightarrow R)$ --see Observation \ref{obse:repeatedly}--.  
\end{proof}
%% \begin{obse} 
%% \begin{enumerate}
%% \item The assertion that $P \circ Q \supset R$ implies $(\operatorname{E}{}^\perp P)^\perp  \supseteq (Q \rightarrow R)$ can be viewed as a partial converse of the ``half adjunction'' relationship 
 
%% \end{enumerate}
%% \end{obse}
\item In order to summarize, we write down explicitly the adjunction
  properties valid in a general $\mathcal {PKAS}$. We also put them
  together --for future use-- with the conclusion of
  \eqref{eqn:seta1.5}.  
\begin{theo}\label{theo:main}
Assume that $P,Q,R \in \mathcal P_\perp(\Pi)$. 
\begin{eqnarray}
\label{eqn:direct} (Q \rightarrow R) \subseteq P &\Rightarrow& R \subseteq P \circ Q\\
\label{eqn:converse} R \subseteq P \circ Q &\Rightarrow& (Q \rightarrow R) \subseteq (\operatorname{E}{}^\perp P)^\perp \subseteq (\operatorname{EE})^\perp \circ P
\end{eqnarray}
\end{theo}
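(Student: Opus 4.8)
The plan is to prove the two implications \eqref{eqn:direct} and \eqref{eqn:converse} by simply assembling results already established in the excerpt; there is essentially no new work, only bookkeeping. For \eqref{eqn:direct}, the statement ``if $Q \rightarrow R \subseteq P$, then $R \subseteq P \circ Q$'' is word-for-word Theorem \ref{theo:adjunction} (the half adjunction property), so I would cite it directly. For the first inclusion in \eqref{eqn:converse}, namely $R \subseteq P \circ Q \Rightarrow (Q \rightarrow R) \subseteq (\operatorname{E}{}^\perp P)^\perp$, this is precisely the content of Theorem \ref{theo:adjunctionconverse}, in its ``equivalently'' form: there we showed $P \circ Q \supseteq R$ implies $(\operatorname{E}{}^\perp P)^\perp \supseteq (Q \rightarrow R)$. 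So the only thing left to glue on is the second inclusion $(\operatorname{E}{}^\perp P)^\perp \subseteq (\operatorname{EE})^\perp \circ P$, which is exactly Corollary \ref{coro:adjunctorpaks}, equation \eqref{eqn:seta1.5}, applied to the given $P \in \mathcal P_\perp(\Pi)$.

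Concretely, the proof I would write is short: ``Implication \eqref{eqn:direct} is Theorem \ref{theo:adjunction}. For \eqref{eqn:converse}, assume $R \subseteq P \circ Q$. By Theorem \ref{theo:adjunctionconverse} we get $\operatorname{E}{}^\perp P \subseteq {}^\perp(Q \rightarrow R)$, equivalently $(\operatorname{E}{}^\perp P)^\perp \supseteq (Q \rightarrow R)$, which is the first inclusion. The second inclusion $(\operatorname{E}{}^\perp P)^\perp \subseteq (\operatorname{EE})^\perp \circ P$ is Corollary \ref{coro:adjunctorpaks}. Chaining the two inclusions gives $(Q \rightarrow R) \subseteq (\operatorname{E}{}^\perp P)^\perp \subseteq (\operatorname{EE})^\perp \circ P$, as claimed.''

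The one genuine subtlety — and the place I would be most careful — is making sure the objects in \eqref{eqn:seta1.5} and in Theorem \ref{theo:adjunctionconverse} line up so the chaining is legitimate: both produce the \emph{same} set $(\operatorname{E}{}^\perp P)^\perp$ (note $\operatorname{E}({}^\perp P)$ and $\operatorname{E}{}^\perp P$ denote the same thing, the image set $\operatorname{E}\cdot({}^\perp P)$ under the application map), so there is no mismatch, and one does not even need $(\operatorname{E}{}^\perp P)^\perp$ to lie in $\mathcal P_\perp(\Pi)$ since it is a perpendicular and hence automatically invariant under double perpendicularity by Observation \ref{obse:initialrl}\eqref{item:perpnonempty}. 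I expect no real obstacle: the theorem is a packaging statement collecting Theorem \ref{theo:adjunction}, Theorem \ref{theo:adjunctionconverse}, and Corollary \ref{coro:adjunctorpaks} into a single display for later reference, so the ``hard part'' is merely verifying that the hypotheses of each cited result ($P, Q, R \in \mathcal P_\perp(\Pi)$ and the working $\mathcal{PAKS}$ context, which supplies the $\operatorname{S}\eta$ rule underlying Corollary \ref{coro:adjunctorpaks}) are exactly those assumed here, which they are.
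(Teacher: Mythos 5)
Your proposal is correct and coincides with the paper's own treatment: the theorem is stated there explicitly as a summary, assembling Theorem \ref{theo:adjunction}, Theorem \ref{theo:adjunctionconverse} and Corollary \ref{coro:adjunctorpaks} (inclusion \eqref{eqn:seta1.5}) exactly as you do. The only nitpick is in your aside: the fact that a set of the form $L^\perp$ is invariant under double perpendicularity is item (5) of Observation \ref{obse:initialrl}, not item \eqref{item:perpnonempty}, but this does not affect the argument.
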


\item \label{item:quasiproofsaxioms} When we add to the $\mathcal {PAKS}$ a subset of terms called quasi proofs we obtain the concept of \emph{Abstract Krivine Structure} --$\mathcal {AKS}$. This last concept was introduced by J.L. Krivine and generalized by T. Streicher --see \cite{kn:kr2008} and \cite{kn:streicher} respectively--.
\begin{defi}\label{defi:aksmain}An \emph{Abstract Krivine Structure} is a decuple: \[(\Lambda,\Pi,\Perp,\operatorname{app},\operatorname{save},\operatorname{push}, \operatorname{K},\operatorname{S},\operatorname{cc},\operatorname{QP}),\] where the nonuple: \[(\Lambda,\Pi,\Perp,\operatorname{app},\operatorname{save},\operatorname{push}, \operatorname{K},\operatorname{S},\operatorname{cc}),\] is a $\mathcal {PAKS}$ and the subset $\operatorname{QP} \subseteq \Lambda$ whose elements are called \emph{quasi proofs} satisfies the following conditions:
\newcounter{rcounter}
\begin{list}{(S\roman{rcounter})}{\usecounter{rcounter}}
\item $\operatorname{K},\operatorname{S},\operatorname{cc} \in \operatorname{QP}$
\item $\operatorname{app}(\operatorname{QP},\operatorname{QP}) \subseteq \operatorname{QP}$. 
\end{list} 
\end{defi}

\begin{obse} \label{obse:EEQP}It is clear that if $\operatorname{QP}$ is as in Definition \ref{defi:aksmain}, then $\operatorname{E}$ as well as $\operatorname{EE}$ are elements of $\operatorname{QP}$.
\end{obse}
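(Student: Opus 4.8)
The plan is purely a matter of unwinding the definitions of $\operatorname{I}$ and $\operatorname{E}$ and then iterating the closure condition (Sii). Recall from the excerpt that $\operatorname{I}=\operatorname{SKK}$ and $\operatorname{E}=\operatorname{S}(\operatorname{K}\operatorname{I})=\operatorname{S}(\operatorname{K}(\operatorname{SKK}))$, where under the left-association convention $\operatorname{SKK}$ parses as $(\operatorname{SK})\operatorname{K}$.

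First I would record that $\operatorname{K},\operatorname{S}\in\operatorname{QP}$ by axiom (Si). Applying (Sii) — the inclusion $\operatorname{app}(\operatorname{QP},\operatorname{QP})\subseteq\operatorname{QP}$ — to the pair $(\operatorname{S},\operatorname{K})$ gives $\operatorname{SK}\in\operatorname{QP}$, and applying it once more to $(\operatorname{SK},\operatorname{K})$ gives $\operatorname{SKK}=\operatorname{I}\in\operatorname{QP}$. Next, from $\operatorname{K},\operatorname{I}\in\operatorname{QP}$, axiom (Sii) yields $\operatorname{KI}=\operatorname{app}(\operatorname{K},\operatorname{I})\in\operatorname{QP}$; one further application to $(\operatorname{S},\operatorname{KI})$ produces $\operatorname{E}=\operatorname{S}(\operatorname{KI})\in\operatorname{QP}$. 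A last application of (Sii) to the pair $(\operatorname{E},\operatorname{E})$ gives $\operatorname{EE}=\operatorname{app}(\operatorname{E},\operatorname{E})\in\operatorname{QP}$, which completes the argument.

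There is no real obstacle here: the only point requiring a little care is respecting the left-association convention when parsing the iterated products, so that each term $\operatorname{SK}$, $\operatorname{SKK}$, $\operatorname{KI}$, $\operatorname{S}(\operatorname{KI})$, $\operatorname{EE}$ is exhibited as an explicit binary application of two terms already known to lie in $\operatorname{QP}$. Once the relevant terms are written as finite application trees built from the leaves $\operatorname{K},\operatorname{S}$, membership in $\operatorname{QP}$ follows immediately by the chain above (equivalently, by structural induction on the tree using (Si) for the leaves and (Sii) for the internal nodes).
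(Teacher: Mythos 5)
Your argument is correct and is exactly the computation the paper leaves implicit when it declares the observation ``clear'': since $\operatorname{E}=\operatorname{S}(\operatorname{K}(\operatorname{SKK}))$ is a finite application tree with leaves $\operatorname{S},\operatorname{K}$, axioms (Si) and (Sii) of Definition \ref{defi:aksmain} give $\operatorname{E}\in\operatorname{QP}$ and then $\operatorname{EE}\in\operatorname{QP}$. Your careful step-by-step unwinding, respecting the left-association convention, matches the intended reasoning.
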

\item The abbreviations and notations introduced along this section, will be in force in this notes.
\section{Combinatory algebras and ordered combinatory algebras.}
\label{section:five}
\item We recall the definition of combinatory algebra --abbreviated as $\mathcal {CA}$--. 
\begin{defi} \label{defi:combalg} A combinatory algebra is a quadruple
  $(A,\circ,\operatorname{k},\operatorname{s})$ where $A$ is a set,
  $\operatorname{k},\operatorname{s} \in A$ is a pair of distinguished
  elements of $A$ and $\circ: A \times A \rightarrow A$ is an
  operation --written as $\circ(a,b)=ab$ and called the application of
  $A$. The data displayed above are subject to the axioms:
  $\operatorname{k}\!ab=a$ and $\operatorname{s}\!abc=ac(bc)$.   
\end{defi}

\begin{obse} The application taken above, is not
  necessarily associative, hence as it is customary we associate to
  the left: $abc=(ab)c$ , etc.   

The axioms introduced in Definition \ref{defi:combalg} mean:
\begin{enumerate}
\item $(\operatorname{k}\!a)b=a$; 
\item $((\operatorname{s}\!a)b)c=(ac)(bc)$.
\end{enumerate}
\end{obse}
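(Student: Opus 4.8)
The statement is purely a matter of unravelling notation, so the plan is simply to make the left-association convention explicit and to substitute it into the two defining axioms; I expect no genuine obstacle here.

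First I would recall the standing convention, stated just before Definition \ref{defi:combalg} for products in general and re-invoked in the observation: juxtapositions associate to the left, so that $abc=(ab)c$ and, more generally, $a_1a_2\cdots a_n=(a_1\cdots a_{n-1})a_n$. Under this convention the symbol $\operatorname{k}ab$ \emph{is} the element $(\operatorname{k}a)b$, the symbol $\operatorname{s}abc$ \emph{is} the element $((\operatorname{s}a)b)c$, and on the right-hand side of the $\operatorname{s}$-axiom the expression $ac(bc)$ is $(ac)(bc)$, since the two inner products $ac$ and $bc$ are already fully bracketed.

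Next I would substitute these readings into the axioms $\operatorname{k}ab=a$ and $\operatorname{s}abc=ac(bc)$ of Definition \ref{defi:combalg}. The first becomes $(\operatorname{k}a)b=a$, which is assertion (1); the second becomes $((\operatorname{s}a)b)c=(ac)(bc)$, which is assertion (2). Since $a,b,c$ were arbitrary elements of $A$, both equalities hold throughout $A$, and this is all the observation asserts.

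The only point requiring care — and it is a bookkeeping point, not a mathematical one — is to apply the left-association rule uniformly to \emph{every} juxtaposition, including the nested ones, rather than only to the outermost. Once that is done the two displayed equalities are literally the axioms of Definition \ref{defi:combalg} with brackets reinstated; passing between the two forms neither loses nor adds information, because $(\operatorname{k}a)b=a$ and $((\operatorname{s}a)b)c=(ac)(bc)$ are the unique fully-bracketed equations whose bracket-erasures are the axioms as originally displayed.
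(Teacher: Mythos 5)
Your proposal is correct and matches the paper's treatment: the observation is purely a notational unpacking, and the paper offers no further argument beyond the left-association convention you invoke. Reinstating the brackets uniformly, as you do, is exactly what the observation asserts.
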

\item We perform some manipulations in a $\mathcal {CA}$.
\begin{obse}
\begin{enumerate}
\item $\operatorname{skk}a=\operatorname{k}a(\operatorname{k}a)=a$, in
  other words the element $\operatorname{skk}$ behaves as the identity
  with respect to the operation in $A$.   

The first equality is a direct consecuence of the second axiom of a
combinatory algebra and the second equality follows directly from the
first --see Definition \ref{defi:combalg}--. 
\item $\operatorname{k}ab=a$, so that $\operatorname{k}$ works as the
  projection in the first coordinate. 
\item
  $\operatorname{k(skk)}ab=(\operatorname{k(skk)}a)b=\operatorname{skk}b=b$,
  so that $\operatorname{k(skk)}$ operates as the projection in the
  second coordinate.  

The first equality is just the law of parenthesis, the second is the
first axiom of a combinatory algebra and the third was just proved.  
\end{enumerate}
\end{obse}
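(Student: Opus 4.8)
The plan is to verify all three identities by straightforward rewriting with the two defining equations of Definition~\ref{defi:combalg}, taking care at each step to respect the left-association convention $abc=(ab)c$ recorded just above; there is no structural difficulty here, only this bookkeeping.

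I would first dispose of item~(2), which is nothing more than the first axiom $\operatorname{k}ab=a$ read with explicit parentheses as $(\operatorname{k}a)b=a$: it says that for every $a$ the element $\operatorname{k}a$ sends each $b$ to $a$, i.e.\ $\operatorname{k}$ is the projection onto the first coordinate. No computation is needed beyond unfolding the juxtaposition.

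For item~(1) I would start from $\operatorname{skk}a=\bigl((\operatorname{s}\operatorname{k})\operatorname{k}\bigr)a$ and apply the second axiom $\operatorname{s}abc=ac(bc)$ with the instantiation $a:=\operatorname{k}$, $b:=\operatorname{k}$, $c:=a$, obtaining $\operatorname{skk}a=\operatorname{k}a(\operatorname{k}a)$; then item~(2), applied with first argument $a$ and second argument $\operatorname{k}a$, collapses the right-hand side to $a$. Hence $\operatorname{skk}a=a$ for all $a\in A$, so $\operatorname{skk}$ behaves as the identity. For item~(3) I would rewrite $\operatorname{k(skk)}ab=\bigl((\operatorname{k}(\operatorname{skk}))a\bigr)b$ and apply item~(2) to the inner subterm $(\operatorname{k}(\operatorname{skk}))a$, which equals $\operatorname{skk}$; the whole expression then reduces to $\operatorname{skk}b$, and item~(1) gives $\operatorname{skk}b=b$, exhibiting $\operatorname{k(skk)}$ as the projection onto the second coordinate.

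The only place where care is genuinely required — and the sole spot where a slip could occur — is matching the pattern of each axiom against the correctly parenthesized subterm before rewriting; once the left-association convention is kept in view the three computations are immediate, and in particular item~(1) should be established before item~(3) since the latter invokes it.
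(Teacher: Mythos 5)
Your proposal is correct and follows essentially the same route as the paper: item (2) is the first axiom with explicit parentheses, item (1) is obtained from the $\operatorname{s}$-axiom followed by the $\operatorname{k}$-axiom, and item (3) uses left-association, the $\operatorname{k}$-axiom, and then item (1). The attention to the left-association convention is exactly the bookkeeping the paper relies on.
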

\item Next we recall the manner in which $\lambda$--calculus can be
  reformulated in the above framework without performing substitutions
  when using reduction.  
\begin{defi} Assume that we have $\mathcal{V}$ a countable set of
  variables 
  that we denote as $x_1,x_2,\cdots$. Consider $\mathcal{U}\subseteq
  \mathcal{V}$ and define  
  $A[\mathcal{U}]$ as the smallest set containing $\mathcal{U}$,
  $\operatorname{k},\operatorname{s}$ and that is closed 
  under application. 
\end{defi}
Observe that each element of
  $A[\mathcal{V}]$ contains only a \emph{finite} number of
  variables and then \[A[\mathcal{V}]=\bigcup\{A[x_1, \dots,
    x_k]\ |\ k\in\mathbb{N}\}\]    
\begin{theo} There is a function $\lambda^*y:A[x_1,\cdots, x_k,y]
  \rightarrow A[x_1,\cdots, x_k]$ satisfying the following property: 
\[\forall t \in A[x_1,\cdots,x_k,y]\,\,,\,\,\forall u \in
A[x_1,\cdots,x_k]\quad \operatorname{then} \quad (\lambda^*y
(t))\circ u=t\{y:=u\}. \]  
\end{theo}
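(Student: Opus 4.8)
The plan is to construct $\lambda^* y$ by recursion on the structure of terms in $A[x_1,\dots,x_k,y]$, mimicking the classical bracket abstraction from combinatory logic. Concretely, I would define $\lambda^* y(t)$ by the three clauses: $\lambda^* y(y) = \operatorname{skk}$ (the identity combinator, which was just shown to satisfy $\operatorname{skk}\,a = a$); $\lambda^* y(t) = \operatorname{k}\!t$ whenever $y$ does not occur in $t$ — in particular for $t = x_i$ and for $t \in \{\operatorname{k},\operatorname{s}\}$; and $\lambda^* y(uv) = \operatorname{s}\,(\lambda^* y(u))\,(\lambda^* y(v))$ when $y$ occurs in the application $uv$. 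One has to check this is well-defined: every element of $A[x_1,\dots,x_k,y]$ is either a variable, one of $\operatorname{k},\operatorname{s}$, or uniquely of the form $uv$ with $u,v \in A[x_1,\dots,x_k,y]$, so the recursion is justified by the inductive definition of $A[\mathcal U]$ as the smallest set containing the generators and closed under application. I would also note that $\lambda^* y$ lands in $A[x_1,\dots,x_k]$: the base cases manifestly do, and the application case does by the induction hypothesis together with closure under application.

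Next I would verify the substitution property $(\lambda^* y(t)) \circ u = t\{y := u\}$ by induction on $t$, for a fixed $u \in A[x_1,\dots,x_k]$. In the case $t = y$: the left side is $\operatorname{skk}\,u = u = y\{y:=u\}$, using the computation $\operatorname{skk}a = a$ from the preceding observation. In the case where $y$ does not occur in $t$: the left side is $(\operatorname{k}\!t)\circ u = \operatorname{k}\,t\,u = t$ by the first combinatory axiom, and $t\{y:=u\} = t$ since $y$ is not free in $t$. In the case $t = vw$ with $y$ occurring in $vw$: the left side is $\operatorname{s}\,(\lambda^* y(v))\,(\lambda^* y(w))\circ u = (\lambda^* y(v)\circ u)\,(\lambda^* y(w)\circ u)$ by the second combinatory axiom $\operatorname{s}abc = ac(bc)$, and by the induction hypotheses this equals $v\{y:=u\}\,w\{y:=u\} = (vw)\{y:=u\}$, since substitution commutes with application. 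This closes the induction.

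The only genuinely delicate point — and the one I would be most careful about — is the bookkeeping around variable occurrence in the middle clause. The definition branches on whether $y$ occurs in $t$, so I must ensure this is a decidable, well-defined predicate on $A[x_1,\dots,x_k,y]$ (it is, by the same structural recursion), and that the two clauses for an application $uv$ — the "$y$ absent, use $\operatorname{k}$" clause versus the "$y$ present, use $\operatorname{s}$" clause — do not conflict and together cover all cases; they do, since "$y$ occurs in $uv$" is exactly "$y$ occurs in $u$ or $y$ occurs in $v$". A secondary subtlety is that $\operatorname{k}$ and $\operatorname{s}$ themselves count as terms with no occurrence of $y$, so $\lambda^* y(\operatorname{k}) = \operatorname{k}\operatorname{k}$ and $\lambda^* y(\operatorname{s}) = \operatorname{k}\operatorname{s}$; this is harmless and the substitution property still goes through via the "$y$ absent" case. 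No use of associativity is needed beyond the left-association convention already fixed, and indeed the proof only ever invokes the two defining axioms of a $\mathcal{CA}$ plus the derived identity $\operatorname{skk}a = a$, so nothing deeper than Definition \ref{defi:combalg} is required.
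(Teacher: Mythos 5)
Your construction is exactly the paper's: the same three bracket-abstraction clauses ($\operatorname{k}t$ when $y$ is absent, $\operatorname{skk}$ for $y$, and $\operatorname{s}(\lambda^*y(p))(\lambda^*y(q))$ for applications), so the approach is essentially identical. In fact the paper's proof stops after giving the definition, whereas you additionally carry out the inductive verification of $(\lambda^*y(t))\circ u=t\{y:=u\}$ (matching the verification the paper only writes out later, for the ordered case in Theorem \ref{theo:calculusinA}), which is correct and if anything more complete.
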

\begin{proof} We abbreviate $(\lambda^*y
(t))\circ u$ as $(\lambda^*y
(t)) u$. Denote $\lambda^*y(t)= \lambda^*y.t$. Define:
\begin{enumerate}
\item $\lambda^*y.t=\operatorname{k}t$ provided that $y$ does not
  appear in $t$. 
\item $\lambda^*y.y=\operatorname{skk}$.
\item $\lambda^*y.(tu)=\operatorname{s}(\lambda^*y.t)(\lambda^*y.u)$. 
\end{enumerate}
\end{proof}
\item Taking the above into account, one could define the standard
  Krivine abstract machine --abbreviated as $\mathcal {KAM}$-- in the
  following manner. 
\begin{defi}\label{defi:kam}
\begin{enumerate} 
\item The terms and stacks are:
\[\Lambda :
x\,|
\operatorname{K}|\operatorname{S}|\operatorname{cc}|\operatorname{k}_\pi|\,ts  
\quad;\quad \Pi : \alpha\,|\,t.\pi,\] 
and as before the elements of the set $\Lambda$ are called the terms and the elements of the set $\Pi$ are called the stacks. The element $\alpha$ is called a constant stack.

As before, the elements of $\Lambda \times \Pi$ are called processes and a generic process is denoted as $t \star \pi$. 
\item The reduction is defined by the following rules:
\newcounter{pcounter}
\begin{list}{(R\arabic{pcounter})}{\usecounter{pcounter}}
\item $ts\star \pi  \ssucc t \star s.\pi$;
\item $\operatorname{K} \star\, t.s.\pi \ssucc t \star \pi$;
\item $\operatorname{S} \star\, t.s.u .\pi \ssucc tu(su) \star \pi$;
\item $\operatorname{cc} \star\, t.\pi \ssucc t \star k_\pi.\pi$;
\item $\operatorname{k}_\pi \star\, t.\pi' \ssucc t \star \pi$.
\end{list}
\end{enumerate}
\end{defi} 
\begin{obse} It is worth noticing that the reduction rules introduced
  in Definition \ref{defi:kam} are equivalent to the assertion that
  $\Perp$ is closed by the antireduction determined by the rules
  written in  Definition \ref{defi:aks} item \eqref{item:aksaxioms}. 
\end{obse}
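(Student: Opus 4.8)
The plan is to show that the statement is, once both sides are unwound, essentially a tautology: the five reduction schemes of Definition \ref{defi:kam} read ``backwards'' as closure conditions on $\Perp$ are pointwise the five axioms (S1)--(S5) of Definition \ref{defi:aks}, \eqref{item:aksaxioms}. The only structural remark one needs to make at the outset is that the reduction $\ssucc$ of Definition \ref{defi:kam} carries \emph{no} congruence (context) rule --- a process $t\star\pi$ is only ever rewritten at its head --- so one-step reduction is, as a binary relation on the set $\Lambda\times\Pi$ of processes, literally the union of the five schemes (R1)--(R5). Consequently ``$\Perp$ is closed under antireduction'', i.e. $p\ssucc p'$ and $p'\in\Perp$ imply $p\in\Perp$, unfolds --- after casing on the shape of $p$ --- into the conjunction of five implications, one per scheme.

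First I would exhibit the pairing explicitly. Orienting (R1) backwards yields ``$t\star s.\pi\in\Perp\Rightarrow ts\star\pi\in\Perp$'', which is verbatim (S1); backwards (R2) yields ``$t\star\pi\in\Perp\Rightarrow\operatorname{K}\star t.s.\pi\in\Perp$ for all $s$'', which is (S2); backwards (R3) yields ``$tu(su)\star\pi\in\Perp\Rightarrow\operatorname{S}\star t.s.u.\pi\in\Perp$'', which is (S3); backwards (R4) yields ``$t\star k_\pi.\pi\in\Perp\Rightarrow\operatorname{cc}\star t.\pi\in\Perp$'', which is (S4); and backwards (R5) yields ``$t\star\pi\in\Perp\Rightarrow k_\pi\star t.\pi'\in\Perp$ for all $\pi'$'', which is (S5). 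Since each ``$(\mathrm{R}i)$ backwards $=(\mathrm{S}i)$'' is an identity of statements, the conjunction over $i=1,\dots,5$ of the left-hand sides coincides with the conjunction over $i=1,\dots,5$ of the right-hand sides, i.e. exactly item \eqref{item:aksaxioms} of Definition \ref{defi:aks}.

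Finally I would add the routine remark that it is immaterial whether ``closed under antireduction'' is read with respect to the one-step relation $\ssucc$ or with respect to its reflexive--transitive closure: a subset of $\Lambda\times\Pi$ closed under single-step antireduction is closed under finite antireduction sequences by an immediate induction on length, and the converse is trivial; so the phrase is unambiguous and the observation is proved. There is no genuine obstacle here --- the whole content of the statement is that Definitions \ref{defi:aks} and \ref{defi:kam} package the very same datum in a ``closure-theoretic'' versus an ``operational'' guise. The one point demanding a little care is the \emph{orientation}: the $\mathcal{KAM}$ schemes are written as reductions $p\ssucc p'$ while the axioms (S$i$) are written as antireductions $p'\in\Perp\Rightarrow p\in\Perp$, and one must line them up as above and not confuse the two directions.
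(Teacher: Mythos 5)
Your proposal is correct: the paper states this observation without any proof, and your pointwise matching of the backwards-oriented rules (R1)--(R5) with the axioms (S1)--(S5), together with the remarks about absence of congruence rules and the irrelevance of one-step versus multi-step antireduction, is exactly the intended (routine) verification. Nothing is missing and nothing diverges from the paper's reasoning.
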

\begin{question} What are the differences between choosing as models one or the other of the following two contexts?
\[\mathcal{PAKS} \Leftrightarrow \mathcal{KAM}\]
\end{question}
\noindent
{\bf Partial answer:}
\begin{center}
\begin{tabular}{ | l | c| c |}
\hline
& $\mathcal{PAKS}$& $\mathcal{KAM}$\\ \hline
The processes & $\Lambda,\Pi$ are more general. & $\Lambda,\Pi$ are more standard. \\ \hline
The calculus & Can be more abstract. & Is more rigid. \\ \hline
\end{tabular}
\end{center}
For example, in a general $\mathcal{PAKS}$ the sets $\Lambda$ and
$\Pi$ could be the same. Moreover, in the situation of an abstract
$\mathcal{PAKS}$ the application can have properties that the standard
$\lambda$--calculus does not have, e.g. it can be commutative. 

\item As the definition of a $\mathcal{PAKS}$ does not involve an
  equality defined in advance, in order to relate this concept with
  the concept of a combinatory algebra, we need to relax the
  definitions and look at \emph{ordered combinatory algebras}
  \cite{kn:hofstra2006}.  
\begin{defi} \label{defi:oca}An ordered combinatory algebra
  --$\mathcal{OCA}$-- consists of the following:  
\begin{enumerate} 
\item A quintuple \[(A,\circ,\leq,\operatorname{k},\operatorname{s}),\] where:
\begin{enumerate}
\item $A$ is a set.
\item $\circ: A \times A \rightarrow A$ is a function $(a,b)\mapsto
  \circ(a,b)=a\circ b$ --the function $\circ$ is called the
  \emph{application} and concerning this application we always associate
  to the left--. 
\item The relation $\leq$ is a partial orden in $A$\footnote{Recall
  that a partial order in $A$ is a relation $\leq\,\, \subseteq A
  \times A$ with the following properties:  (1) Reflexivity: $a \leq
  a$; (2) Antisymmetry: $a \leq b$, $b \leq a$ implies, $a=b$; (3)
  Transitivity: $a \leq b$ and $b \leq c$, imply $a \leq c$. A partial
  order that do not necessarily satisfies (2), is called a
  preorder.}. 
\item $\operatorname{k}$ and $\operatorname{s}$ are a pair of
  distinguished elements of $A$. 
\end{enumerate}
\item The above ingredients are subject to the following axioms.
\begin{enumerate}
\item The map $\circ:A \times A \rightarrow A$ is monotone with respect to the cartesian product order in $A \times A$ --i.e. if $a \leq a'$ and $b \leq b'$, then $ab \leq a'b'$--. 
\item \label{item:ks} The distinguished elements satisfy:
\begin{enumerate}
\item $\operatorname{k}\!ab \leq a$; 
\item $\operatorname{s}\!abc \leq ac(bc)$.
\end{enumerate}
\end{enumerate}
\item  \label{item:application}We say that the $\mathcal {OCA}$ is
  equipped with an implication, if there is a binary operation
  --called implication-- $\rightarrow: A \times A \rightarrow A$ with
  the following properties: 
\begin{enumerate}
\item \label{item:halfadj}(\emph{Half Adjunction property}.) For all $a,b,c \in A$, if $a \leq (b \rightarrow c)$ then $ab \leq c$. 
\item \label{item:monotony} The map $\rightarrow: A \times A \rightarrow A$ is monotone in the second variable and antimonotone in the first.
\end{enumerate}
\item (\emph{Adjunction property}.) We say that the $\mathcal {OCA}$
  with implication has the \emph{complete adjunction property} or
  simply the $\emph{adjunction property}$ if there is a distinguished
  element $\operatorname {e} \in A$, with the property that for all
  $a,b,c \in A$, if $ab \leq c$ then $\operatorname{e}a \leq (b
  \rightarrow c)$. The element $\operatorname{e}$ is called an
  \emph{adjunctor}.  
\item We say that the $\mathcal {OCA}$ $A$ is \emph{classic}, if there is an element $\operatorname{c}$ with the property that $\operatorname{c} \leq (((a\rightarrow b)\rightarrow a)\rightarrow a)$. 
\item A subset $B \subset A$ is a sub--$\mathcal {OCA}$ if:
\begin{enumerate}
\item $\circ(B \times B) \subseteq B$.
\item $\operatorname{k},\operatorname{s} \in B$.
\item If the original $\mathcal {OCA}$ has an implication   
$\rightarrow$, we ask $B$ to satisfy that $\rightarrow(B \times B)
  \subseteq B$.  
\item In the situation that $A$ has an adjunctor $\operatorname{e} \in A$, we assume that $\operatorname{e} \in B$.
\end{enumerate}
 \end{enumerate}
\end{defi}
\begin{obse} 
\begin{enumerate}
\item It is clear that if $B \subseteq A$ is a sub--$\mathcal {OCA}$,
  then $(B,\circ|_{B \times B}, \leq|_{B\times B},
  \operatorname{k},\operatorname{s})$ is 
  also an $\mathcal {OCA}$. Moreover, if $A$ has an implication 
  $\rightarrow$, then the restriction $\rightarrow|_{B \times B}$ is
  an implication for $(B,\circ|_{B \times
    B}, \leq|_{B\times B},
  \operatorname{k},\operatorname{s})$. Similarly, if 
  $\operatorname{e}$ is an adjunctor for $A$ that belongs to $B$, it is
  also an adjunctor for $B$.  
\item The property above --Definition \ref{defi:oca},
  \eqref{item:halfadj} is called ``half adjunction property'', because
  of the following. If we fix $x \in A$, the morphisms $R_x: y \mapsto
  (x \rightarrow y) :A \rightarrow A$ and $L_x:y \mapsto (x\circ y): A
  \rightarrow A$, satisfy the property that $a \leq R_b(c)$ implies
  that $L_b(a) \leq c$. If we view the preorder set $(A, \leq)$ as a
  category and the maps $L_x,R_x$ as functors, the equivalence $a \leq
  R_b(c)$ if and only if $L_b(a) \leq c$ can be stated as: for all $x
  \in A$ the functor $R_x$ is the right adjoint of $L_x$.  
\item In case that the original $\mathcal {OCA}$ has an adjunctor, we
  have the following situation: for all $a,b,c \in A$:    
\[a \leq R_b(c) \Rightarrow L_a(b) \leq c \Rightarrow
\operatorname{e}a \leq R_b(c).\] 
\end{enumerate}   
\end{obse}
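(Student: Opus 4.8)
The statement collects three observations about Definition~\ref{defi:oca}, and the plan is to obtain each by a direct unwinding of the axioms; no new idea is required.

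For item~(1) I would start from the principle that every clause defining an $\mathcal{OCA}$ --- and likewise every clause governing an implication or an adjunctor --- is either a closure condition on a subset or a universally quantified (in)equality, and that both kinds of condition pass automatically to a subset closed under the operations named in the sub--$\mathcal{OCA}$ clauses of Definition~\ref{defi:oca}. Concretely, given a sub--$\mathcal{OCA}$ $B\subseteq A$, I would check in turn: that $\leq|_{B\times B}$ is a partial order, which is immediate since reflexivity, antisymmetry and transitivity hold pointwise; that $\circ$ restricts to $B\times B\to B$ and $\operatorname{k},\operatorname{s}\in B$, which are exactly the sub--$\mathcal{OCA}$ clauses; that $\circ|_{B\times B}$ is monotone, because $a\leq a'$ and $b\leq b'$ in $B$ are the same inequalities read in $A$, so $ab\leq a'b'$ in $A$ with both sides in $B$; and that $\operatorname{k}ab\leq a$ and $\operatorname{s}abc\leq ac(bc)$ hold for $a,b,c\in B$, because they hold for all $a,b,c\in A$ and the displayed products lie in $B$ by closure under application. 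That settles the first assertion. For the clause about implication I would note that the sub--$\mathcal{OCA}$ hypothesis provides $\rightarrow|_{B\times B}\colon B\times B\to B$ and that the half adjunction property and the monotonicity/antimonotonicity of $\rightarrow$ are universal, hence hold in $B$. For the clause about the adjunctor I would observe that, assuming $\operatorname{e}\in B$, the implication $ab\leq c\Rightarrow\operatorname{e}a\leq(b\rightarrow c)$ holds for $a,b,c\in B$ since it holds in $A$ and $ab,\operatorname{e}a,b\rightarrow c$ all lie in $B$.

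Items~(2) and~(3) I would treat as immediate. The implication in item~(2), $a\leq R_b(c)\Rightarrow L_b(a)\leq c$, is just Definition~\ref{defi:oca},~\eqref{item:halfadj} rewritten in the functorial notation $L_x,R_x$, and the two sentences after it are the standard categorical gloss --- viewed inside the poset-category $(A,\leq)$, the full equivalence $a\leq R_b(c)\Leftrightarrow L_b(a)\leq c$, which is not being claimed in general, is exactly the assertion that $R_b$ is right adjoint to $L_b$ --- so nothing is left to prove there. Item~(3) I would get by composing two facts already in hand: the half adjunction property gives $a\leq R_b(c)\Rightarrow L_a(b)\leq c$, and the defining inequality of the adjunctor $\operatorname{e}$ gives $L_a(b)\leq c\Rightarrow\operatorname{e}a\leq R_b(c)$; concatenating these two implications is precisely the displayed chain.

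I do not expect a genuine obstacle here --- the whole statement is a transcription exercise. The only point that will repay a moment's care is notational bookkeeping: keeping track of which argument of the non-associative, left-associating product $\circ$ is held fixed when passing to the maps $L_x$ and $R_x$, so that the products appearing in items~(2) and~(3) line up with the one occurring in the half adjunction axiom and in the definition of the adjunctor.
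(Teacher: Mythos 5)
Your proposal is correct and is essentially the paper's own (implicit) argument: the observation is stated without proof precisely because every clause is a universally quantified inequality or a closure condition that restricts verbatim to a sub--$\mathcal{OCA}$, which is exactly the unwinding you carry out. Your closing remark about which argument of $\circ$ is held fixed is well taken --- note that with the paper's convention $L_x(y)=x\circ y$ the half adjunction axiom yields $L_a(b)\leq c$ (as in item (3)), so the $L_b(a)$ appearing in item (2) is the paper's own notational slip, not a gap in your argument.
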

\begin{defi} \label{defi:suboca}Assume that in (the $\mathcal {OCA}$)
  $A$, we have a subset $X \subseteq A$. Define the sub--$\mathcal{OCA}
  , \langle X \rangle=\bigcap \{B \subseteq A:  X \subseteq B,
  \text{$B$ sub $\mathcal{OCA}$ of}\,\, A\}$. 

In the case $A$ has an adjunctor, we assume that $\mathcal{OCA}$s we take in the intersection always contain $\operatorname{e}$. This is in order to guarantee that $\langle X \rangle$ has an adjunctor.   
\end{defi}
\begin{obse} It is important to remark the following difference. In
  combinatory algebras the concept or reduction is not present, only
  the concept of computation. In the present context, the symbol
  $\leq$ should be interpreted as ``reduces to''.   
\end{obse}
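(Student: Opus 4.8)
The final statement is an interpretive observation rather than a formal claim, so the justification I would give consists of reading the orientation of $\leq$ off the defining axioms and matching it against the directed reduction of the Krivine machine. The plan is to contrast the combinator axioms of the two structures and then to identify the $\mathcal{OCA}$ order with the reflexive--transitive closure of the one-step reduction $\ssucc$ of the $\mathcal{KAM}$.

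First I would recall that in a $\mathcal{CA}$ (Definition \ref{defi:combalg}) the combinator axioms are the \emph{equalities} $\operatorname{k}ab=a$ and $\operatorname{s}abc=ac(bc)$. An equality is symmetric, hence it identifies a redex with its contractum and offers no way to tell the two apart; the relation it generates is the congruence ``computes to the same value'', with no preferred direction. This establishes the first sentence of the observation: a $\mathcal{CA}$ carries computation but no reduction. Passing to an $\mathcal{OCA}$ (Definition \ref{defi:oca}, \eqref{item:ks}), the very same data reappear as the \emph{inequalities} $\operatorname{k}ab\leq a$ and $\operatorname{s}abc\leq ac(bc)$, which are now oriented from redex to contractum.

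To pin down that this orientation is exactly ``reduces to'', I would match these inequalities with the one-step reduction rules of the $\mathcal{KAM}$ (Definition \ref{defi:kam}): rule (R2), $\operatorname{K}\star t.s.\pi \ssucc t\star\pi$, corresponds to $\operatorname{k}ab\leq a$, and rule (R3), $\operatorname{S}\star t.s.u.\pi \ssucc tu(su)\star\pi$, corresponds to $\operatorname{s}abc\leq ac(bc)$. Since $\ssucc$ is literally a single step of head reduction, the order $\leq$ abstracts its reflexive--transitive closure, whence ``$a\leq b$'' reads ``$a$ reduces to $b$''. The monotonicity axiom of Definition \ref{defi:oca} then says precisely that this reduction is a congruence for application, i.e.\ that it may be carried out inside a context, and the half-adjunction property \eqref{item:halfadj}, $a\leq(b\rightarrow c)\Rightarrow ab\leq c$, reads as the reduction form of modus ponens.

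The one delicate point, and the step I expect to need care, is reconciling the requirement that $\leq$ be a genuine \emph{partial order} with the behaviour of reduction. Reflexivity and transitivity are immediate, corresponding to the empty reduction and to concatenating reduction sequences. Antisymmetry, however, is strictly stronger than reduction supplies on the nose: it forces $a=b$ whenever $a\leq b$ and $b\leq a$, thereby excluding nontrivial cycles. I would therefore state the conclusion as an idealisation: $\leq$ is the reduction relation obtained by taking the reflexive--transitive closure of $\ssucc$ and passing to the induced partial order, which is exactly the reading the observation proposes and the sense in which the absence of reduction in a $\mathcal{CA}$ becomes its presence, recorded by $\leq$, in an $\mathcal{OCA}$.
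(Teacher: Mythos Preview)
Your proposal is appropriate: the statement is an interpretive \emph{Observation} and the paper offers no proof of it at all --- it is simply asserted as a remark contrasting the equalities of Definition~\ref{defi:combalg} with the inequalities of Definition~\ref{defi:oca}. You correctly recognise this and supply a careful justification the paper does not, matching the $\mathcal{OCA}$ axioms $\operatorname{k}ab\leq a$, $\operatorname{s}abc\leq ac(bc)$ against the $\mathcal{KAM}$ rules (R2), (R3) and flagging the one genuine subtlety (antisymmetry versus mere preorder on reduction), so your write-up is if anything more thorough than the source.
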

\item We perform some computations in the $\mathcal{OCA}$.
\begin{lema}\label{lema:ocaoperations} If $A$ is an $\mathcal {OCA}$, the following properties are valid.
\begin{enumerate} 
 \item If for $b \in A$ we call $\operatorname{i}_b= \operatorname
   {sk}b$ we have that $\operatorname{i}_ba \leq a$ for all $a \in
   A$. In particular the same is valid for
   $\operatorname{i}=\operatorname{i_k}$ 
\item $\operatorname{ki}a=\operatorname{k(skk)}a \leq \operatorname{skk}$ and $\operatorname{ki}ab=\operatorname{k(skk)}ab \leq b$.
\item Call $\operatorname{e}_0=\operatorname{s(ki)}$, then $\operatorname{e}_0ab \leq ab$.  
\item In particular $\operatorname{e}_0\operatorname{e}_0 a \leq \operatorname{e}_0 a$. 
\end{enumerate} 
\begin{proof}
\begin{enumerate} 
 \item We have that: $(((\operatorname{sk})b)a) \leq
   (\operatorname{k}a)(ba) \leq a$, using the conditions appearing in
   Definition \ref{defi:oca}, \eqref{item:ks}.  
\item We prove the second inequality, the first is similar: $\operatorname{k(skk)}ab=(\operatorname{k(skk)}a)b \leq \operatorname{skk}b \leq b$.
\item  $\operatorname{e}_0ab= \operatorname{s(ki)}ab \leq (\operatorname{ki}b)(ab)\leq \operatorname{i}(ab)\leq ab$. 
\item The inequality $\operatorname{e}_0\operatorname{e}_0 a \leq \operatorname{e}_0a$, follows directly from the previous result.  
\end{enumerate}
\end{proof}
\end{lema}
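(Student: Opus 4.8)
The plan is to establish the four items in the order given, each one reducing to the two defining inequalities $\operatorname{k}ab\leq a$ and $\operatorname{s}abc\leq ac(bc)$ from Definition~\ref{defi:oca}, \eqref{item:ks}, together with monotonicity of the application (Definition~\ref{defi:oca}) and transitivity of $\leq$. Throughout I keep in mind that $\circ$ associates to the left, so that for instance $\operatorname{sk}ba$ abbreviates $((\operatorname{s}\operatorname{k})b)a$.

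For (1) I would chain the $\operatorname{s}$-inequality with the $\operatorname{k}$-inequality: $\operatorname{i}_b a=(\operatorname{sk}b)a\leq(\operatorname{k}a)(ba)\leq a$, the first step being the instance of $\operatorname{s}xyz\leq xz(yz)$ with $x:=\operatorname{k}$, $y:=b$, $z:=a$, and the second being $\operatorname{k}a(ba)\leq a$; transitivity concludes, and the assertion for $\operatorname{i}=\operatorname{i}_{\operatorname{k}}=\operatorname{skk}$ is the special case $b:=\operatorname{k}$. For (2), since $\operatorname{i}=\operatorname{skk}$ we have $\operatorname{ki}=\operatorname{k(skk)}$, and $\operatorname{k(skk)}a\leq\operatorname{skk}$ is an instance of $\operatorname{k}uv\leq u$; applying $\circ\, b$ on the right and using monotonicity of $\circ$ in the first argument gives $\operatorname{k(skk)}ab=(\operatorname{k(skk)}a)b\leq(\operatorname{skk})b$, and $(\operatorname{skk})b\leq b$ is exactly (1). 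For (3), unfolding $\operatorname{e}_0=\operatorname{s(ki)}$ and using the $\operatorname{s}$-inequality yields $\operatorname{e}_0ab=(\operatorname{s(ki)}a)b\leq(\operatorname{ki}b)(ab)$; by (2) we have $\operatorname{ki}b\leq\operatorname{skk}=\operatorname{i}$, hence $(\operatorname{ki}b)(ab)\leq\operatorname{i}(ab)$ by monotonicity, and $\operatorname{i}(ab)\leq ab$ by (1). Finally (4) is just (3) instantiated at $(\operatorname{e}_0,a)$ in place of $(a,b)$.

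There is no genuine obstacle here; the lemma is a bookkeeping exercise in the combinator axioms. The only points requiring care are respecting left-association when reading the combinator words, invoking monotonicity of $\circ$ (in the relevant argument only) each time an already-established inequality is placed inside a further application, and then letting transitivity of the partial order assemble the resulting chains.
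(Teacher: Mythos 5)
Your proof is correct and follows essentially the same route as the paper's: each item is obtained by instantiating the axioms $\operatorname{k}ab\leq a$ and $\operatorname{s}abc\leq ac(bc)$, chaining with monotonicity of the application and transitivity, with (3) built on (1) and (2) and (4) as an instance of (3). The only difference is that you make the monotonicity steps explicit where the paper leaves them implicit.
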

\item Let $A$ be an $\mathcal{OCA}$, we introduce the concept of filter in $A$. 
\begin{defi}\label{defi:filter}
A subset $\Phi \subseteq A$ is said to be a filter if:
\newcounter{mcounter}
\begin{list}{(F\arabic{mcounter})}{\usecounter{mcounter}}

%\item If $f \in \Phi$ and $a \in A$ is $f \leq a$, then $a \in \Phi$.
\item The subset $\Phi$ is closed under application.
\item $\operatorname{k},\operatorname{s} \in \Phi$. 
\item If $A$ has an adjunctor $\operatorname{e}$, then $\operatorname{e} \in \Phi$. 
\item If $A$ is classic, we assume that $c \in \Phi$. 
\end{list}
\end{defi}
\begin{obse} It is clear that given $A$ and $\Phi$ as above, if we
  restrict to the filter the application and the order, then $\Phi$
  becomes a sub $\mathcal{OCA}$ of $A$.  
\end{obse}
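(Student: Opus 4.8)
The plan is to run through the defining clauses of a \emph{sub--$\mathcal{OCA}$} from Definition \ref{defi:oca} for the restricted data $(\Phi,\circ|_{\Phi\times\Phi},\leq|_{\Phi\times\Phi},\operatorname{k},\operatorname{s})$, to check each of them against the filter axioms (F1)--(F4) of Definition \ref{defi:filter}, and then to invoke the Observation immediately following Definition \ref{defi:oca}: once $\Phi$ is known to be a sub--$\mathcal{OCA}$, that Observation tells us that the restricted data is itself an $\mathcal{OCA}$, and that it inherits the implication $\rightarrow|_{\Phi\times\Phi}$ and the adjunctor of $A$ whenever $A$ has them.

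The clause-by-clause matching is immediate. The requirement $\circ(\Phi\times\Phi)\subseteq\Phi$ is exactly axiom (F1); the requirement $\operatorname{k},\operatorname{s}\in\Phi$ is exactly axiom (F2); and the requirement that $\operatorname{e}\in\Phi$ whenever $A$ carries an adjunctor $\operatorname{e}$ is exactly axiom (F3) (note that axiom (F4), on the Peirce element $c$, is not even needed, since the definition of sub--$\mathcal{OCA}$ imposes nothing about $c$). The only defining clause of a sub--$\mathcal{OCA}$ that is not literally one of (F1)--(F4) is the one asking that $\rightarrow(\Phi\times\Phi)\subseteq\Phi$ when $A$ carries an implication; since the filter axioms do not by themselves force this, the statement is to be read either as concerning the underlying implication-free structure $(A,\circ,\leq,\operatorname{k},\operatorname{s})$, or under the tacit convention --- parallel to the one already made for $\operatorname{e}$ in (F3) and for $c$ in (F4) --- that a filter of an $\mathcal{OCA}$-with-implication is additionally required to be closed under $\rightarrow$.

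With the sub--$\mathcal{OCA}$ clauses in hand, nothing remains to be computed: the monotonicity of $\circ|_{\Phi\times\Phi}$ with respect to the product order, the inequalities $\operatorname{k}ab\leq a$ and $\operatorname{s}abc\leq ac(bc)$ for $a,b,c\in\Phi$, and the reflexivity, antisymmetry and transitivity of $\leq|_{\Phi\times\Phi}$ are all simply the restrictions to $\Phi\subseteq A$ of the corresponding facts in $A$, which is precisely what the quoted Observation packages. I therefore do not expect any genuine obstacle: each verification is a one-line restriction argument, and the only point that deserves care is making the bookkeeping around the implication explicit, as indicated above.
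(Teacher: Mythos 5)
Your verification is correct and is exactly the routine clause-by-clause check the paper treats as immediate (the paper offers no proof beyond ``it is clear''): (F1)--(F3) are literally the sub--$\mathcal{OCA}$ clauses for application, $\operatorname{k},\operatorname{s}$, and the adjunctor. Your remark about closure under $\rightarrow$ is well taken and consistent with the paper's wording, which restricts only ``the application and the order'' to $\Phi$, i.e.\ the observation concerns the implication-free structure.
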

\item In what follows, we will program directly in the
  $\mathcal{OCA}$, using the standard codifications in the
  combinatory algebras. 

\begin{defi} Let $A$ be an $\mathcal {OCA}$ and take a countable set
  of \emph{variables}: $\mathcal{V}=\{x_1,x_2,\cdots\}$. Consider
  $A(\mathcal{V})$ --called \emph{the set of terms in $A$}-- that is
  the set of formal 
  expressions given by the following grammar:
  \[
  p_1, p_2 ::= a \quad |\quad x\quad |\quad p_1 p_2
  \]
  where $a\in A$ and $x\in\mathcal{V}$. We denote as $A(x_1,\dots,
    x_k)$ the set of terms in $A$ containing only the variables $x_1,\cdots,x_k$. The term $p_1p_2$ is called the application of $p_1$ and $p_2$.   
\end{defi}
We can endow canonically a quotient of $A(\mathcal{V})$ with an $\mathcal{OCA}$ structure.  
\begin{obse} Consider the  --minimal-- partial preorder $R$ on $A(\mathcal{V})$ defined by the following statements: 
  \begin{enumerate}
    \item For $a, b\in A$ and if $a\leq b$, then $a \ R \ b$.
    \item For all $a, b\in A$: $a b \ R \ a\circ b$ and
      $a\circ b \ R \ a b$. 
    \item If $p_1, p_2, q_1, q_2\in A(\mathcal{V})$ are such that $p_1
      \ R \ p_2$ and $q_1 \ R \ q_2$ then $p_1q_1
      \ R \ p_2 q_2$. 
    \item If $p_1, p_2\in A(\mathcal{V})$ then $\operatorname{k} p_1 p_2 \ R
      \ p_1$. 
    \item If $p_1, p_2, p_3\in A(\mathcal{V})$ then $\operatorname{s} p_1 p_2 p_3
      \ R \ p_1 p_3 (p_2 p_3)$. 
  \end{enumerate}

Notice that this minimal preorder exists because we can take the intersection of the non empty family of preorders that satisfy the above conditions and the family is not empty because it always contains the trivial relation $A(\mathcal{V}) \times A(\mathcal{V})$.
   
  Define an equivalence relation $\equiv_R$ on $A(\mathcal{V})$ as: $p
    \equiv_R q$ iff $p \ R \ q$ and $q \ R \ p$. Thus, the order $R$ can be factored to the
    quotient $A[\mathcal{V}]:=A(\mathcal{V})/\equiv_R$ endowing it with a partial order. This quotient is called the set of polynomials in $A$. Observe that $ab \equiv_R a\circ b$ for
    all $a, b\in A$.  
    
    In order to simplify notations, we will use the same symbol $p$ to denote a polynomial (an element of the quotient) as well as for a term which belongs to the equivalence class of $p$.  

    Observe that in accordance with (3), if $p_1, p'_1, p_2, p'_2$ are terms such that
    $p_1\equiv_R p'_1$ and $p_2\equiv_R p'_2$ then $p_1 p_2 \equiv_R
    p'_1 p'_2$. Thus the application of terms induces a corresponding ``application'' of polynomials that we denote as $p_1 \star p_2$. 

    Then, by definition, $(A[\mathcal{V}], R, \star)$ is an $\mathcal{OCA}$ and $(A,
    \leq, \circ)$ is a sub-$\mathcal{OCA}$ of $(A[\mathcal{V}], R, \star)$.

    Abusing slightly the notations and when there are not
    possibilities of confusion, we denote the relation $R$ as $\leq$
    and the operation~$\star$ as~$\circ$ or as the concatenation of
    the factors. Also we call the elements of $A[\mathcal V]$
    \emph{terms} instead of \emph{polynomials}. We say that
    $(A[\mathcal{V}], \leq, \circ)$ is an extension of $(A, \leq,
    \circ)$.    
\end{obse}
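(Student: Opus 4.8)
The claim packages two assertions: that $(A[\mathcal V], R, \star)$, with the classes of $\operatorname{k}$ and $\operatorname{s}$ as distinguished elements, is an $\mathcal{OCA}$; and that the canonical map $\iota\colon A\to A[\mathcal V]$, $a\mapsto [a]$, presents $(A,\leq,\circ)$ as a sub-$\mathcal{OCA}$, so that $A[\mathcal V]$ genuinely extends $A$. The first assertion I would treat as bookkeeping. Since $R$ is a preorder (it is the intersection of a nonempty family of preorders, hence reflexive and transitive), $\equiv_R$ is an equivalence relation and the quotient $A[\mathcal V]$ is well defined; the relation it inherits from $R$ is reflexive and transitive by inheritance and antisymmetric by the construction of $\equiv_R$ (if $[p]\leq[q]$ and $[q]\leq[p]$ then $p\ R\ q$ and $q\ R\ p$, i.e. $[p]=[q]$), hence a partial order. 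Clause (3) in the definition of $R$, read in both directions, shows that $\star$ is well defined on classes and monotone; clauses (4) and (5) descend to give $\operatorname{k}\,[p_1]\,[p_2]\leq[p_1]$ and $\operatorname{s}\,[p_1]\,[p_2]\,[p_3]\leq[p_1]\,[p_3]\,([p_2]\,[p_3])$, which are exactly the $\operatorname{k}$- and $\operatorname{s}$-axioms. That settles the $\mathcal{OCA}$ structure.

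For the second assertion, clause (1) says $\iota$ is monotone, clause (2) says $\iota$ is multiplicative ($\iota(a)\star\iota(b)=[ab]=[a\circ b]=\iota(a\circ b)$), and $\iota$ sends $\operatorname{k},\operatorname{s}$ to the distinguished elements of $A[\mathcal V]$; hence $\iota(A)$ is closed under $\star$ and contains $[\operatorname{k}],[\operatorname{s}]$, i.e. it is a sub-$\mathcal{OCA}$ of $A[\mathcal V]$. What makes the identification of $A$ with $\iota(A)$ legitimate --- and the word \emph{extension} accurate --- is that $\iota$ is an \emph{order-embedding}, equivalently that $R\cap(A\times A)$ is exactly ${\leq}$. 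One inclusion is clause (1); the content is the reverse inclusion $R\cap(A\times A)\subseteq{\leq}$, i.e. that the quotient collapses no comparabilities internal to $A$.

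To prove it I would exploit the minimality of $R$: it suffices to produce one preorder $R_0$ on $A(\mathcal V)$ satisfying clauses (1)--(5) with $R_0\cap(A\times A)={\leq}$, for then $R\subseteq R_0$ yields $R\cap(A\times A)\subseteq{\leq}$. Take $R_0$ to be the \emph{evaluation} preorder: each term $p\in A(x_1,\dots,x_n)$ induces a map $A^n\to A$, $\vec a\mapsto p(\vec a)$, by substituting and computing with $\circ$; declare $p\ R_0\ q$ iff, after padding $p$ and $q$ with dummy variables to a common $A(x_1,\dots,x_n)$, one has $p(\vec a)\leq q(\vec a)$ for all $\vec a\in A^n$ (this is independent of the padding). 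Reflexivity and transitivity of $R_0$ follow from those of $\leq$ on $A$; clause (1) is immediate; clause (2) holds because $ab$ and $a\circ b$ have equal evaluations; clause (3) is precisely monotonicity of $\circ$ on $A$; and clauses (4) and (5) are the $\mathcal{OCA}$ inequalities $\operatorname{k}ab\leq a$ and $\operatorname{s}abc\leq ac(bc)$ applied pointwise. Finally, for $a,b\in A$ evaluation at the empty valuation gives $a\ R_0\ b\Leftrightarrow a\leq b$, so $R_0\cap(A\times A)={\leq}$; combined with antisymmetry of $\leq$ on $A$ this also yields injectivity of $\iota$.

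The one step that requires care --- and the closest thing to an obstacle --- is this construction of $R_0$ together with the routine verification that it validates clauses (1)--(5): that is the single place where the $\mathcal{OCA}$ axioms of $A$ (monotonicity of $\circ$ together with the $\operatorname{k}$- and $\operatorname{s}$-inequalities) are actually invoked, everything else being formal manipulation of the defining clauses of $R$. I therefore expect no real difficulty beyond carefully setting up the evaluation map and the padding convention so that $R_0$ is well defined.
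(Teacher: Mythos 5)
Your proposal is correct, and it does more than the paper does: the paper records this statement as an Observation and gives essentially no argument --- the only verifications it sketches are that the intersection defining $R$ exists, that $\equiv_R$ lets $R$ descend to a partial order on the quotient, and that clause (3) makes $\star$ well defined on classes; the $\mathcal{OCA}$ axioms for $A[\mathcal{V}]$ then hold ``by definition'' because clauses (4) and (5) are imposed by fiat, and the sub-$\mathcal{OCA}$/extension claim is simply asserted. Where you genuinely depart from the paper is the evaluation preorder $R_0$: by minimality of $R$ you obtain $R\cap(A\times A)\subseteq{\leq}$, which together with clause (1) shows that the canonical map $A\to A[\mathcal{V}]$ is an injective order-embedding. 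That is precisely what is needed for ``$(A,\leq,\circ)$ is a sub-$\mathcal{OCA}$ of $(A[\mathcal{V}],R,\star)$'' to be literally true with the original order $\leq$ --- i.e.\ that passing to the quotient neither identifies distinct elements of $A$ nor creates new comparabilities between elements of $A$ --- and the paper leaves this point implicit. Your construction of $R_0$ checks out (clauses (1)--(5) follow pointwise from monotonicity of $\circ$ and the $\operatorname{k}$- and $\operatorname{s}$-inequalities in $A$), and it also makes transparent that these axioms of $A$ are invoked only at this step, since on the quotient they hold by construction. Both routes are sound: the paper's buys brevity, yours buys a rigorous justification of the words ``sub-$\mathcal{OCA}$'' and ``extension'' at the modest cost of setting up the evaluation map and the padding convention.
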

\begin{theo} \label{theo:calculusinA} For any finite set of variables
  $\{x_1,\cdots,x_k,y\} $, 
  there is a function $\lambda^*y:A[x_1,\cdots,x_k,y] \rightarrow
  A[x_1,\cdots, x_k]$ satisfying the following property: 
  \begin{equation}\label{eqn:basic} \mbox{If }t \in
    A[x_1,\cdots,x_k,y]\,\,,\,\,\mbox{and } u \in 
  A[x_1,\cdots,x_k] \quad \operatorname{then} \quad (\lambda^*y
  (t))\circ u \leq t\{y:=u\}.
\end{equation}   
  Moreover if $X \subseteq A$ is an arbitrary subset and $t$ is a term
  with all its coefficients in 
  $X$, then $\lambda^*y(t)$ is a term with all its coefficients in
  $\langle X \rangle$. In particular if all the coefficients of $t$ are in the
  filter $\Phi$, then $\lambda^*y(t)$ is a polynomial with all the
  coefficients in $\Phi$. 
\end{theo}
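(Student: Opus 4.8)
The plan is to define $\lambda^*y$ by structural recursion on the term $t \in A[x_1,\dots,x_k,y]$, mimicking the classical bracket abstraction for combinatory algebras but now working in the ordered setting where equalities become inequalities $\leq$ (i.e. ``reduces to''). Concretely, I would set $\lambda^*y.t = \operatorname{k}t$ when $y$ does not occur in $t$ (here $t$ is then already a term in $A[x_1,\dots,x_k]$), $\lambda^*y.y = \operatorname{skk}$ (equivalently $\operatorname{i}$, using Lemma \ref{lema:ocaoperations}), and $\lambda^*y.(t_1t_2) = \operatorname{s}(\lambda^*y.t_1)(\lambda^*y.t_2)$ in the remaining case. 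Since every term is built from atoms in $A$, variables, and applications, this recursion is well-defined; one minor point is that a term of the form $t_1t_2$ in which $y$ does not occur should be handled by the first clause (or one checks the clauses agree up to $\leq$), so I would phrase the recursion as: first test whether $y\in\free(t)$, and only if so descend into the application clause.

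Next I would verify the defining inequality \eqref{eqn:basic} by induction on $t$, in parallel with the recursion. In the constant/variable-free case, $(\operatorname{k}t)u \le t = t\{y:=u\}$ by axiom \eqref{item:ks}(i) of an $\mathcal{OCA}$ together with monotonicity of $\circ$ (Definition \ref{defi:oca}, axiom (2)(a)); note $t\{y:=u\}=t$ here. For $t=y$, we have $(\lambda^*y.y)u = \operatorname{skk}u \le u = y\{y:=u\}$ by Lemma \ref{lema:ocaoperations}(1). For the application case $t = t_1t_2$ with $y$ occurring, $(\operatorname{s}(\lambda^*y.t_1)(\lambda^*y.t_2))u \le (\lambda^*y.t_1)u\,((\lambda^*y.t_2)u) \le t_1\{y:=u\}\,(t_2\{y:=u\}) = (t_1t_2)\{y:=u\}$, where the first step is axiom \eqref{item:ks}(ii), the second is the inductive hypothesis applied to $t_1$ and $t_2$ together with monotonicity of $\circ$, and the last is the definition of substitution. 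Here I am using transitivity of $\le$ freely, and the fact that the extension $(A[\mathcal V],\le,\circ)$ is itself an $\mathcal{OCA}$ so that all the axioms and Lemma \ref{lema:ocaoperations} apply verbatim to polynomials.

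Finally, the coefficient-tracking claim follows by the same induction: the only atoms introduced by the three clauses are $\operatorname{k}$, $\operatorname{s}$, and (via Lemma \ref{lema:ocaoperations}) $\operatorname{skk}$ — all of which lie in any sub-$\mathcal{OCA}$ and in particular in $\langle X\rangle$ and in any filter $\Phi$ — and otherwise $\lambda^*y(t)$ is built by application from subterms whose coefficients are, inductively, among the coefficients of $t$ together with $\operatorname{k},\operatorname{s}$. Since $\langle X\rangle$ is closed under application (Definition \ref{defi:suboca}) and contains $X \cup \{\operatorname{k},\operatorname{s}\}$, and likewise $\Phi$ is closed under application and contains $\operatorname{k},\operatorname{s}$ by (F1)–(F2), both assertions drop out. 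I do not expect a serious obstacle here; the only thing to be careful about is the bookkeeping of variable occurrences (so that the recursion terminates and the clauses are exhaustive and compatible up to $\le$) and making sure every inequality is applied inside the extension $A[\mathcal V]$ rather than in $A$ itself, since the intermediate polynomials genuinely contain the variable-free combinators but also the parameter $u$.
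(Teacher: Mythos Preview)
Your proposal is correct and follows essentially the same approach as the paper: the paper defines $\lambda^*y$ by the same three clauses ($\operatorname{k}t$ when $y$ is absent, $\operatorname{skk}$ for $y$, and $\operatorname{s}(\lambda^*y.p)(\lambda^*y.q)$ for an application), verifies \eqref{eqn:basic} by the same case analysis, and handles the coefficient claim by the same closure argument. Your extra care about the overlap of the first and third clauses and about working in the extension $A[\mathcal V]$ is a welcome refinement but not a departure from the paper's argument.
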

\begin{proof} We give the following recursive definition for
  $\lambda^*y$: 
  \begin{itemize}
  \item If $y$ does not appears in $t$, then $\lambda^*y(t):= \operatorname{k}t$
  \item $\lambda^*y(y):= \operatorname{skk}$
  \item If $p, q$ are polynomials in $A[x_1, \dots, x_k, y]$, then
    $\lambda^*y(p q):= \operatorname{s}(\lambda^*y(p)) (\lambda^*y(q))$
  \end{itemize}
  Next we show that this function satisfies the requirements. Let us
  consider $u\in A[x_1, \dots, x_k]$. If $x\neq y$ then
  $(\lambda^*y(x)) u=\operatorname{k} x u\leq
  x=x\{y:=u\}$. $(\lambda^*y(y)) u=\operatorname{skk}u 
  \leq \operatorname{k}u(\operatorname{k}u)\leq u$. Suppose now that $p, q$ are such that
  $(\lambda^*y(p)), (\lambda^*y(p))$ when applied to $u$ satisfy  the
  inequalities \eqref{eqn:basic}. Then,
  $(\lambda^*y(pq))u=\operatorname{s}(\lambda^*y(p)) (\lambda^*y(q)) u\leq 
  (\lambda^*y(p)) u ((\lambda^*y(q))u)\leq p\{y:=u\}
  q\{y:=u\}=pq\{y:=u\}$.       

  Observe that, since $\langle X\rangle$ contains $\operatorname{k, s}$ and is closed
  under applications, then the condition on the coefficients of
  $\lambda^*y(t)$ follows by induction.  
\end{proof}
\begin{obse}\begin{enumerate}
\item Sometimes we write $\lambda^*y(t)=\lambda^*y.t$
\item Since application is monotone in both arguments, the proof of Theorem \ref{theo:calculusinA} can be interpreted as a method to translate lambda terms into elements of $A[\mathcal{V}]$ in such a way that $\leq$ 
reflects $\beta$-reduction.
\item
Moreover, the condition on the coefficients guarantees that lambda terms are translated as polinomials with coefficients on $\langle
\emptyset\rangle$ which is included into any filter $\Phi$ (it is in fact
the minimal filter of $A$). In
particular, a closed lambda term is translated as a constant
polynomial with coefficients on $\Phi$, which is identified with an
element of $\Phi$.
\end{enumerate}  
\end{obse}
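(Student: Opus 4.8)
The statement bundles three assertions of quite different natures --- a notational convention (item (1)), an interpretive claim (item (2)), and a coefficient/filter claim (item (3)) --- and I would treat them separately. Item (1) merely fixes the abbreviation $\lambda^* y(t) = \lambda^* y.t$ and requires no argument, so I would simply adopt it and move on.

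For item (2) the plan is to make the informal ``translation'' explicit and then verify that $\beta$-reduction lands inside the relation $\leq$. I would fix a pure $\lambda$-calculus and define a map $\tau$ from $\lambda$-terms into $A[\mathcal{V}]$ by recursion: $\tau(x) = x$, $\tau(MN) = \tau(M)\,\tau(N)$, and $\tau(\lambda y.M) = \lambda^* y(\tau(M))$. The crux is the single head $\beta$-step: for a redex $(\lambda y.M)N$, Theorem \ref{theo:calculusinA} applied to $t = \tau(M)$ and $u = \tau(N)$ gives $(\lambda^* y(\tau(M)))\circ \tau(N) \leq \tau(M)\{y := \tau(N)\}$, and an easy induction on $M$ --- using that $\lambda^*$ commutes with substitution of the remaining variables --- shows the right-hand side equals $\tau(M\{y := N\})$, so that $\tau((\lambda y.M)N) \leq \tau(M\{y := N\})$. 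To propagate a contraction occurring inside an applicative context I would invoke the monotonicity of $\circ$ in both arguments (Definition \ref{defi:oca}): from $\tau(M) \leq \tau(M')$ one obtains $\tau(PM) \leq \tau(PM')$ and $\tau(MP) \leq \tau(M'P)$. Transitivity of $\leq$ then yields the implication $M \twoheadrightarrow_\beta N \Rightarrow \tau(M) \leq \tau(N)$, which is exactly the assertion that $\leq$ reflects $\beta$-reduction.

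For item (3) I would use the coefficient clause of Theorem \ref{theo:calculusinA}. A pure $\lambda$-term carries no constants from $A$, so $\tau(M)$ is built solely from variables by $\circ$ and $\lambda^*$; since $\lambda^*$ introduces only $\operatorname{k}$ and $\operatorname{s}$, the coefficient set of $\tau(M)$ as a subset of $A$ is empty, and the theorem then places all coefficients of $\tau(M)$ in $\langle \emptyset \rangle$. I would next check that $\langle \emptyset \rangle \subseteq \Phi$ for every filter $\Phi$: by Definition \ref{defi:suboca}, $\langle \emptyset \rangle$ is the intersection of all sub-$\mathcal{OCA}$s (retaining $\operatorname{e}$), and by Definition \ref{defi:filter} every filter is such a sub-$\mathcal{OCA}$, hence appears in that intersection. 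Conversely, $\langle \emptyset \rangle$ is itself closed under application and contains $\operatorname{k}, \operatorname{s}$ (and $\operatorname{e}, \operatorname{c}$ as applicable), so it is a filter; being contained in all filters it is the minimal one. Finally, for a closed term $M$ the polynomial $\tau(M)$ is variable-free, so repeatedly using the identification $ab \equiv_R a\circ b$ collapses it to a single element of $A$; this element lies in $\langle \emptyset \rangle \subseteq \Phi$, which is the claimed identification of a closed $\lambda$-term with an element of $\Phi$.

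I expect the only genuine obstacle to lie in item (2), in the congruence step for a contraction occurring \emph{under} a $\lambda$-abstraction. If the intended reduction is the weak (non-under-$\lambda$) reduction of the Krivine machine, this case never arises and the monotonicity of $\circ$ suffices --- which is precisely the point the remark emphasizes. For full $\beta$-reduction, however, one additionally needs $\lambda^* y(p) \leq \lambda^* y(q)$ whenever $p \leq q$, i.e. the monotonicity of the abstraction operator; this does not follow immediately from the monotonicity of $\circ$ because $\lambda^*$ branches on the syntactic shape of its argument, and I would establish it by a separate induction checking that each clause of the recursive definition of $\lambda^*$ is monotone. Everything else reduces to a direct appeal to Theorem \ref{theo:calculusinA}, to the monotonicity axiom, or to the defining properties of sub-$\mathcal{OCA}$s and filters.
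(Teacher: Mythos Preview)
Your proposal is correct and considerably more detailed than what the paper offers: the paper gives no proof at all for this observation, treating it as immediate commentary on Theorem~\ref{theo:calculusinA}. Your explicit translation map $\tau$, the head-redex argument via the theorem, the propagation through applicative contexts by monotonicity, and the coefficient argument for item~(3) are all sound, and your caveat about reduction under $\lambda$ is a genuine subtlety the paper simply does not address.

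One minor point on item~(3): your claim that ``every filter is such a sub-$\mathcal{OCA}$, hence appears in that intersection'' is not literally justified by the paper's definitions, since Definition~\ref{defi:oca}(6) requires a sub-$\mathcal{OCA}$ to be closed under $\rightarrow$ while Definition~\ref{defi:filter} does not impose this on filters. The conclusion $\langle\emptyset\rangle\subseteq\Phi$ is nonetheless correct, but for a more direct reason: the recursive clauses for $\lambda^*$ introduce only $\operatorname{k}$, $\operatorname{s}$ and application, so the coefficients of $\tau(M)$ for a pure $\lambda$-term lie in the closure of $\{\operatorname{k},\operatorname{s}\}$ under application, and any filter contains $\operatorname{k},\operatorname{s}$ and is closed under application. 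This is in fact exactly how Theorem~\ref{theo:calculusinA} itself justifies the coefficient clause (``since $\langle X\rangle$ contains $\operatorname{k},\operatorname{s}$ and is closed under applications''), and the paper's own loose remark after Definition~\ref{defi:filter} that a filter ``becomes a sub $\mathcal{OCA}$'' should be read in this restricted sense.
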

\begin{theo} \label{theo:compoca}If $A$ is an $\mathcal {OCA}$, then:
\begin{enumerate}
\item \label{item:compoca1}There are elements $p,p_1,p_2 \in \Phi$ with the following properties: 
\begin{equation}\label{eqn:pairing}\forall a,b \in A\,,\, p_1(p ab) \leq a \,;\, p_2(pab) \leq b.
\end{equation}
It is customary to call $pab=a \wedge b$ and in that case the properties above --Equation \eqref{eqn:pairing}-- read: 
\begin{equation}\label{eqn:projections}\forall a,b \in A\,,\, p_1(a\wedge b) \leq a \,;\, p_2(a \wedge b) \leq b.
\end{equation}
\item There is an $f \in \Phi$ such that for all $a,b \in A$ we have that
\begin{equation} \label{eqn:commutation} (fa)b \leq ba.
\end{equation}
\item \label{item:compoca2}There are functions $D,E,F,G: A \rightarrow A$ and $M:A \times A \rightarrow A$ such that for all $a,b,c \in A$, then:
\begin{eqnarray}
\label{eqn:cero} ((D(a)c)b) \leq c(ab) &,& ((E(a)b)c) \leq c(ab) \\
\label{eqn:primera}(F(c)a)b &\leq& c (ab)\\
\label{eqn:segunda}G(c)(pab) &\leq& (ca)b \\
\label{eqn:cuarta}M(c,b)a &\leq& (ca)b.
\end{eqnarray} 
Moreover: $D(\Phi) \subseteq \Phi, E(\Phi) \subseteq \Phi\,,\,F(\Phi) \subseteq \Phi\,,\,G(\Phi) \subseteq \Phi$ and $M(\Phi,\Phi) \subseteq \Phi$. 

\end{enumerate} 
\end{theo}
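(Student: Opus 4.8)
The plan is to produce all the elements and functions claimed in the statement as $\lambda^{*}$-terms in the sense of Theorem~\ref{theo:calculusinA}, applied where necessary to the relevant arguments as coefficients, so that each displayed inequality is obtained by iterating the defining property $(\lambda^{*}y(t))\circ u\le t\{y:=u\}$ together with the monotonicity of $\circ$ (Definition~\ref{defi:oca}), transitivity of $\le$, the combinator inequalities $\operatorname{k}ab\le a$ and $\operatorname{s}abc\le ac(bc)$, and the consequences collected in Lemma~\ref{lema:ocaoperations}. In every case membership in $\Phi$ will be read off from the last sentence of Theorem~\ref{theo:calculusinA}: the coefficients occurring in the terms below are either built from $\operatorname{k},\operatorname{s}$ alone, hence lie in $\langle\emptyset\rangle\subseteq\Phi$, or are among the parameters $a,b,c$, which we assume to lie in $\Phi$.

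For part~\ref{item:compoca1} I would use the standard Church encoding of pairs. Put $p=\lambda^{*}x\,\lambda^{*}y\,\lambda^{*}z.\,zxy$, $p_{1}=\lambda^{*}w.\,w\operatorname{k}$ and $p_{2}=\lambda^{*}w.\,w(\operatorname{k}(\operatorname{skk}))$. Then $pab\le\lambda^{*}z.\,zab$, so $p_{1}(pab)\le(pab)\operatorname{k}\le\operatorname{k}ab\le a$ and $p_{2}(pab)\le(pab)(\operatorname{k}(\operatorname{skk}))\le\operatorname{k}(\operatorname{skk})ab\le b$, the last step by Lemma~\ref{lema:ocaoperations}; writing $pab=a\wedge b$ this is exactly~\eqref{eqn:projections}. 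For the commutator of part~(2) take $f=\lambda^{*}x\,\lambda^{*}y.\,yx$, so that $(fa)b\le(\lambda^{*}y.\,ya)b\le ba$, which is~\eqref{eqn:commutation}.

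For part~\ref{item:compoca2} each function is again an abstraction, with the variables displayed in the statement bound and the remaining symbols treated as coefficients:
\[D(a)=\lambda^{*}c\,\lambda^{*}b.\,c(ab),\qquad E(a)=\lambda^{*}b\,\lambda^{*}c.\,c(ab),\qquad F(c)=\lambda^{*}a\,\lambda^{*}b.\,c(ab),\qquad M(c,b)=\lambda^{*}a.\,(ca)b,\]
together with $G(c)=\lambda^{*}w.\,c(p_{1}w)(p_{2}w)$. The inequalities~\eqref{eqn:cero},~\eqref{eqn:primera} and~\eqref{eqn:cuarta} then follow by applying the defining property of $\lambda^{*}$ once per bound variable. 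For~\eqref{eqn:segunda} I would first obtain $G(c)(pab)\le c(p_{1}(pab))(p_{2}(pab))$ from the $\lambda^{*}$ property and then use $p_{1}(pab)\le a$ and $p_{2}(pab)\le b$ from part~\ref{item:compoca1}, together with monotonicity of $\circ$, to conclude $G(c)(pab)\le(ca)b$. Stability of $\Phi$ under $D,E,F,G,M$ is then immediate from Theorem~\ref{theo:calculusinA}, using for $G$ that $p_{1},p_{2}\in\Phi$ were already produced in part~\ref{item:compoca1}.

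I do not expect a genuine obstacle: the content is entirely combinatory bookkeeping. The points that need care are keeping the left-association and the order of bound variables straight --- in particular distinguishing $D$ from $E$, which have the same reduct but transposed arguments --- and, for $G$ and~\eqref{eqn:segunda}, chaining the monotonicity of $\circ$ with the projection inequalities rather than with a single reduction step, which is why part~\ref{item:compoca1} should be proved first and its witnesses $p,p_{1},p_{2}$ reused throughout.
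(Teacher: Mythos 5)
Your proposal is correct and follows essentially the same route as the paper: the same Church-style pairing $p,p_1,p_2$ (the paper's $\operatorname{k}'=\lambda^*x_1\lambda^*x_2.x_2$ is just your $\operatorname{k}(\operatorname{skk})$), the same commutator $f=\lambda^*x\lambda^*y.yx$, and the same $\lambda^*$-abstraction definitions of $D,E,F,G,M$, with $\Phi$-stability read off from Theorem~\ref{theo:calculusinA}. The paper merely states the terms and omits the verification details you spell out (monotonicity chaining for \eqref{eqn:segunda} in particular), so there is nothing to change.
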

\begin{proof} 
\begin{enumerate}
\item Define $p=\lambda^*x_1\lambda^*x_2\lambda^*x_3 x_3x_1x_2\,,\,p_1=\lambda^*x_1x_1\operatorname{k}\,,\,p_2=\lambda^*x_1 x_1 \operatorname{k}'$; where $\operatorname{k}'=\lambda^*x_1.\lambda^*x_2.x_2$.
\item Consider $f=\lambda^*x_1\lambda^*x_2x_2x_1$. In this situation it is clear that $(fa)b \leq ba$. 
\item Define $D(a)=\lambda^*x\lambda^*y x(ay)$, $E(a)=\lambda^*x\lambda^*y y(ax)$, $F(c)=\lambda^*x\lambda^*y c(xy)$, $G(c)=\lambda^*x (c(p_1x))(p_2x)$ and $M(c,b)=\lambda^*x. (cx)b$.

\end{enumerate}
\end{proof}

For later use we prove some properties of the \emph{meet} or \emph{wedge} operator. 
\begin{lema}\label{lema:meetoper} Assume that $A$ is an $\mathcal{OCA}$ as above --Definition \ref{defi:oca}--. 
\begin{enumerate}
\item \label{item:meetcero} The operator $\wedge: A \times A \rightarrow A$ is monotone in both variables, i.e. $a \leq a', b \leq b'$ implies that $a \wedge b \leq a'\wedge b'$
\item \label{item:meetuno} There is a map $R:A \rightarrow A$ with the property that for all $a,b,c \in A$ we have that $R(c)(a \wedge b) \leq a \wedge (cb)$. Moreover
$R(\Phi)\subseteq \Phi$. 
\end{enumerate}  
\end{lema}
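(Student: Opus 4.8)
The plan is to treat the two items separately; both reduce to the elementary $\lambda^*$-calculus of Theorem \ref{theo:calculusinA} together with the pairing and projection combinators $p,p_1,p_2\in\Phi$ supplied by Theorem \ref{theo:compoca}, \eqref{item:compoca1}, and the monotonicity of $\circ$. For item \eqref{item:meetcero} I would simply unfold $a\wedge b=p\,a\,b=(p\,a)\,b$: if $a\leq a'$ then $p\,a\leq p\,a'$ and hence $(p\,a)\,b\leq (p\,a')\,b$ by monotonicity of $\circ$ in each argument, and if $b\leq b'$ then $(p\,a')\,b\leq(p\,a')\,b'$; chaining these gives $a\wedge b\leq a'\wedge b'$. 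There is nothing more to do here.

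For item \eqref{item:meetuno} the idea is to reconstruct the pair: given the code $x$ of $a\wedge b$, extract its first component with $p_1$ and its second with $p_2$, apply $c$ to the second component, and pair up again. Concretely I would set
\[ R(c):=\lambda^*x.\,p\,(p_1 x)\,(c\,(p_2 x)). \]
Applying Theorem \ref{theo:calculusinA} with the substitution $x:=a\wedge b$ yields $R(c)(a\wedge b)\leq p\,\bigl(p_1(a\wedge b)\bigr)\,\bigl(c\,(p_2(a\wedge b))\bigr)$. By \eqref{eqn:projections} we have $p_1(a\wedge b)\leq a$ and $p_2(a\wedge b)\leq b$, so monotonicity of $\circ$ gives first $p\,\bigl(p_1(a\wedge b)\bigr)\leq p\,a$ and $c\,(p_2(a\wedge b))\leq c\,b$, and then $p\,\bigl(p_1(a\wedge b)\bigr)\,\bigl(c\,(p_2(a\wedge b))\bigr)\leq (p\,a)(c\,b)=p\,a\,(cb)=a\wedge(cb)$. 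Combining the two displayed inequalities gives $R(c)(a\wedge b)\leq a\wedge(cb)$, as wanted.

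For the filter statement, the term $p\,(p_1 x)\,(c\,(p_2 x))$ has coefficients among $p,p_1,p_2,c$; since $p,p_1,p_2\in\Phi$ by Theorem \ref{theo:compoca}, \eqref{item:compoca1}, all of these lie in $\Phi$ whenever $c\in\Phi$, and then the ``moreover'' clause of Theorem \ref{theo:calculusinA} shows that $R(c)=\lambda^*x.\,p\,(p_1 x)\,(c\,(p_2 x))$ again has all coefficients in $\Phi$, i.e. $R(\Phi)\subseteq\Phi$. The only point deserving a little care is bookkeeping of the direction of the inequalities: the projections $p_1,p_2$ only recover the components of $a\wedge b$ up to $\leq$, not up to equality, so it matters that we reassemble them in a position where monotonicity of $\circ$ propagates these inequalities upward toward $a\wedge(cb)$ — which it does, precisely because $a\wedge(cb)$ is built by the same pairing combinator $p$ applied to the (larger) genuine components $a$ and $cb$. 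Beyond this I do not anticipate any real obstacle.
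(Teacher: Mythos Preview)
Your proof is correct. For item \eqref{item:meetcero} you argue exactly as the paper does. For item \eqref{item:meetuno} your route is genuinely different: you define $R(c)$ in one shot as the $\lambda^*$-term $\lambda^*x.\,p\,(p_1x)\,(c\,(p_2x))$ and check the inequality by a direct unpack--apply--repack computation using the projection laws and monotonicity. The paper instead assembles $R(c)$ compositionally out of the auxiliary functions of Theorem \ref{theo:compoca}\,\eqref{item:compoca2}, setting $R(c)=G\bigl(F(D(c))\,p\bigr)$ and chaining the inequalities \eqref{eqn:cero}, \eqref{eqn:primera}, \eqref{eqn:segunda}. Both arguments ultimately rest on the same $\lambda^*$-machinery (since $D,F,G$ are themselves $\lambda^*$-defined), so there is no real difference in strength; your version is more transparent about what $R(c)$ actually does, while the paper's version illustrates how the general reshuffling combinators $D,F,G$ can be composed mechanically, which is the style it favors in later computations. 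The filter preservation follows in either case: for you from the coefficient clause of Theorem \ref{theo:calculusinA}, for the paper from the already-recorded facts $D(\Phi),F(\Phi),G(\Phi)\subseteq\Phi$ and $p\in\Phi$.
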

\begin{proof}
\begin{enumerate}
\item This part follows directly from the fact that the application in
  $A$ is monotone in both variables.  
\item The following chain of inequalities yields the result.
\[a \wedge (cb) = (pa)(cb) \geq (D(c)(pa))b \geq \Big(\big(F(D(c))p\big)a\Big)b \geq G(F(D(c))p)(pab) \geq R(c)(a\wedge b).\]
Where we denoted $G(F(D(c))p)=R(c)$.  The justification of the chain of inequalities is the following going from left to right: \eqref{eqn:cero}, \eqref{eqn:primera}, \eqref{eqn:segunda}.
\end{enumerate}
\end{proof}
We need some consequences of Theorem \ref{theo:compoca}, that we record here for later use.
\begin{coro} \label{coro:globaladj} \begin{enumerate}
\item There is a function $H:A \times A \rightarrow A$ with the property that for all $a,b,c,m,n \in A$, we have that:
\begin{equation}\label{eqn:globaladj}
m((na)b) \leq c \Rightarrow H(m,n)a \leq (b \rightarrow c).
\end{equation}
Moreover, the function $H$ satisfies that $H(\Phi,\Phi) \subseteq \Phi$. 
\item \label{item:preparticular} In the previous notations, for any $a,b,c \in A$ we have that 
\[ (F(\operatorname{e})F(c))(a \rightarrow b) \leq (a \rightarrow (cb)).\]
\item \label{item:particular} In the previous notations, for any $a,b \in A$ we have that 
\[(F(\operatorname {e}) f) a \leq b \rightarrow ba.\]
In particular \[(F(\operatorname {e}) f) a \leq \operatorname{i} \rightarrow a.\]
\item \label{item:partconv} For all $a,b \in A$ as for $f \in \Phi$ as above, we have that: \[(fb)(b \rightarrow a) \leq a.\]
In particular \[(f\operatorname{i})(\operatorname{i} \rightarrow a) \leq a.\]
\end{enumerate}
\end{coro}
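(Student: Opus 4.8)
The plan is to prove all four parts with one uniform recipe. To bound an element of the form $w\,a$ below $(b\rightarrow c)$, I would first exhibit an auxiliary element $x$ satisfying $x\,b\le c$, so that the adjunction property of Definition \ref{defi:oca} yields $\operatorname{e}\,x\le (b\rightarrow c)$, and then bound $w\,a\le\operatorname{e}\,x$ by a ``rewiring'' computation using the combinators $F$ (from \eqref{eqn:primera}) and $f$ (from \eqref{eqn:commutation}) of Theorem \ref{theo:compoca}; I will use the half adjunction property \eqref{item:halfadj} repeatedly in the form $(u\rightarrow v)\,u\le v$.

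For \eqref{eqn:globaladj}: assume $m((na)b)\le c$ and set $x:=F(m)(na)$. By \eqref{eqn:primera}, $x\,b=(F(m)(na))\,b\le m((na)b)\le c$, so the adjunction property gives $\operatorname{e}(F(m)(na))\le(b\rightarrow c)$. It then suffices to produce $H$ with $H(m,n)\,a\le\operatorname{e}(F(m)(na))$, which is pure rewiring; I would take $H(m,n):=\lambda^{*}x.\,\operatorname{e}\bigl(F(m)(nx)\bigr)$, so that \eqref{eqn:basic} gives $H(m,n)\,a\le\operatorname{e}(F(m)(na))$ (equivalently one can use the closed form $H(m,n)=F(\operatorname{e})\bigl(F(F(m))\,n\bigr)$, applying \eqref{eqn:primera} twice, the last time with head $\operatorname{e}$). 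For $H(\Phi,\Phi)\subseteq\Phi$: when $m,n\in\Phi$ the coefficients of $H(m,n)$ are $\operatorname{e}$, $F(m)$ and $n$, all in $\Phi$ since $\operatorname{e}\in\Phi$ and $F(\Phi)\subseteq\Phi$, so $H(m,n)\in\Phi$ by Theorem \ref{theo:calculusinA}.

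For \eqref{item:preparticular} I would apply the recipe with $x:=F(c)(a\rightarrow b)$: since $(a\rightarrow b)\,a\le b$ by half adjunction, \eqref{eqn:primera} gives $x\,a=(F(c)(a\rightarrow b))\,a\le c\bigl((a\rightarrow b)\,a\bigr)\le c\,b$, hence the adjunction property gives $\operatorname{e}\bigl(F(c)(a\rightarrow b)\bigr)\le\bigl(a\rightarrow(cb)\bigr)$, and one more use of \eqref{eqn:primera} with head $\operatorname{e}$ gives $(F(\operatorname{e})F(c))(a\rightarrow b)\le\operatorname{e}\bigl(F(c)(a\rightarrow b)\bigr)$. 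For \eqref{item:particular}, take $x:=f\,a$: \eqref{eqn:commutation} gives $x\,b=(fa)\,b\le b\,a$, so the adjunction property gives $\operatorname{e}(fa)\le(b\rightarrow ba)$, and \eqref{eqn:primera} gives $(F(\operatorname{e})f)\,a\le\operatorname{e}(fa)$; this holds for every $b$, and specializing $b:=\operatorname{i}$ and using $\operatorname{i}\,a\le a$ (Lemma \ref{lema:ocaoperations}) with monotonicity of $\rightarrow$ in the second argument \eqref{item:monotony} gives $(F(\operatorname{e})f)\,a\le(\operatorname{i}\rightarrow a)$. Part \eqref{item:partconv} needs no adjunctor: \eqref{eqn:commutation} gives $(fb)(b\rightarrow a)\le(b\rightarrow a)\,b$ and half adjunction gives $(b\rightarrow a)\,b\le a$; the particular case is $b:=\operatorname{i}$.

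I do not expect any real obstacle here: the entire argument is the uniform recipe ``reduce the goal $w\,a\le(b\rightarrow c)$ to an inequality $x\,b\le c$, apply the adjunctor, then rewire $w\,a$ below $\operatorname{e}\,x$''. The only care needed is bookkeeping with left-association, so that in each invocation of \eqref{eqn:primera} or \eqref{eqn:commutation} the intended subterm plays the role of the argument, together with the routine verification that the coefficients of $H$ remain inside $\Phi$ in part \eqref{eqn:globaladj}.
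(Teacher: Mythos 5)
Your proof is correct and follows essentially the same route as the paper: the same reductions via \eqref{eqn:primera} and \eqref{eqn:commutation}, the adjunctor applied to the same intermediate inequalities, and in part (1) your closed form $F(\operatorname{e})\bigl(F(F(m))n\bigr)$ is exactly the paper's $H(m,n)=F(\operatorname{e})(F^2(m)n)$. The only differences are cosmetic (you apply the adjunctor before the final rewiring, and you spell out the monotonicity step for the $b=\operatorname{i}$ specialization, which the paper leaves implicit).
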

\begin{proof} The proof follows from previous constructions. 
\begin{enumerate}
\item By applying a few times inequality \eqref{eqn:primera} we have that:
\[((F^2(m)n)a)b \leq (F(m)(na))b \leq m((na)b) \leq c.\]
By the basic property of the adjunctor we deduce that:
$\operatorname{e}((F^2(m)n)a) \leq (b \rightarrow c)$. Using again the
inequality \eqref{eqn:primera} we obtain that:
$\Big(F(\operatorname{e})\big(F^2(m)n\big)\Big)a \leq
\operatorname{e}\big((F^2(m)n)a\big) \leq (b \rightarrow c)$. Then,
this part is proved by taking
$H(m,n)=F(\operatorname{e})\big(F^2(m)n\big)$. 
\item Starting from $(a \rightarrow b) \leq (a \rightarrow b)$ we
  deduce that $(a\rightarrow b)a \leq b$ and then $c((a \rightarrow
  b)a) \leq cb$. By using inequality \eqref{eqn:primera} we deduce
  that $(F(c)(a \rightarrow b))a \leq cb$ and by the property of the
  adjunctor we deduce that $e(F(c)(a\rightarrow b))\leq (a \rightarrow
  cb)$. Then, the proof can be finished using again inequality
  \eqref{eqn:primera}.  
\item Starting from --see \eqref{eqn:commutation}-- $(fa)b \leq ba$ we
  deduce that $\operatorname{e}(fa) \leq b \rightarrow ba$. Using
  \eqref{eqn:primera}, we conclude that $(F(\operatorname{e})f)a \leq
  b \rightarrow ba$. The rest of the assertion is guaranteed if we
  take $b=\operatorname{i}$.  
\item By the definition of $f$ we have that $f b (b\to a)\leq (b\to a)b$ which
  is less or equal than $a$. 
\end{enumerate}
\end{proof} 
\begin{obse}\label{obse:converseadj}
\begin{enumerate}
\item \label{item:converseadj} Concerning the converse of the above Corollary \ref{coro:globaladj}, one has the following easy result that is a direct consequence of the inequality \eqref{eqn:segunda} defining the function $G$.

For the function $G:A \rightarrow A$ we have that for all $m,a,b,c \in A$:
\[ ma \leq (b \rightarrow c) \Rightarrow G(m)(a \wedge b)=G(m)((pa)b) \leq c. \]

Indeed, the basic half adjunction property guarantees that $ma \leq (b \rightarrow c) \Rightarrow (ma)b \leq c$. The rest follows from the definition of $G$.
Moreover, the function $G$ is such that $G(\Phi) \subset \Phi$ and the element $p \in \Phi$.
\item It is interesting to consider the following. The  proof of Corollary \ref{coro:globaladj} uses strongly the property of the existence of the adjunctor $\operatorname{e}$ in the $\mathcal {OCA}$. Here we show a converse, i.e. if the result \eqref{eqn:globaladj} is valid, the existence of the adjunctor can be deduced. 

Indeed, if we assume that $ab \leq c$, applying twice the fact that  $\operatorname{i}d \leq d$ for all $d \in A$, we conclude that $\operatorname{i}((\operatorname{i}a)b) \leq (\operatorname{i}a)b \leq ab\leq c$. Hence using the result of Corollary \ref{coro:globaladj}, we deduce that $H(\operatorname{i},\operatorname{i})a \leq (b \rightarrow c)$. Hence, the element 
 $\operatorname{e} = H(\operatorname{i},\operatorname{i}) \in \Phi$, works as an adjunctor.
\end{enumerate}   
\end{obse}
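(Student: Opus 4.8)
The plan is to dispatch the two parts of the Observation independently, since each reduces to chaining inequalities already available, with the only care being to match the shape of the hypotheses of the cited results. For the first part, I would begin from the assumption $ma \leq (b \rightarrow c)$ and apply the half adjunction property (Definition \ref{defi:oca}, \eqref{item:halfadj}) with the element $ma$ in the role of ``$a$'', obtaining $(ma)b \leq c$. I would then invoke the defining inequality of $G$, namely \eqref{eqn:segunda}, which after renaming reads $G(m)(pab) \leq (ma)b$; composing the two inequalities by transitivity gives $G(m)(a \wedge b) = G(m)(pab) \leq (ma)b \leq c$, the desired implication. The coefficient assertions $G(\Phi) \subseteq \Phi$ and $p \in \Phi$ require no separate argument: both $G$ and $p$ were produced in Theorem \ref{theo:compoca} as $\lambda^*$-expressions whose coefficients already lie in $\Phi$, and that theorem records precisely these membership facts.

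For the second part, the goal is to extract an adjunctor from the hypothetical global adjunction datum $H$ of \eqref{eqn:globaladj}, so I would aim to feed \eqref{eqn:globaladj} an instance of its premise $m((na)b) \leq c$ with $m = n = \operatorname{i}$. Starting from an arbitrary triple $a,b,c$ with $ab \leq c$, I would build the chain $\operatorname{i}((\operatorname{i}a)b) \leq (\operatorname{i}a)b \leq ab \leq c$, where the outer step uses $\operatorname{i}d \leq d$ (Lemma \ref{lema:ocaoperations}) at $d = (\operatorname{i}a)b$, the middle step uses $\operatorname{i}a \leq a$ together with monotonicity of the application in Definition \ref{defi:oca}, and the last step is the hypothesis. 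Applying \eqref{eqn:globaladj} to this chain with $m = n = \operatorname{i}$ yields $H(\operatorname{i},\operatorname{i})a \leq (b \rightarrow c)$, so that setting $\operatorname{e} := H(\operatorname{i},\operatorname{i})$ verifies the defining property of an adjunctor in Definition \ref{defi:oca}; since $H(\Phi,\Phi) \subseteq \Phi$ and $\operatorname{i} \in \Phi$, this $\operatorname{e}$ lands in $\Phi$ as well.

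The hard part will not be any single estimate but the bookkeeping of shapes: \eqref{eqn:globaladj} consumes a premise of the exact form $m((na)b) \leq c$, so the two insertions of $\operatorname{i}$ are essential rather than decorative, being what make the premise applicable with a genuine $m$ and $n$ in place rather than the bare $ab \leq c$. I would therefore be careful to respect the left-association convention $abc = (ab)c$ throughout the chain and to justify the monotonicity step $(\operatorname{i}a)b \leq ab$ from $\operatorname{i}a \leq a$ together with monotonicity of $\circ$, so that the final inequality lands in the literal template demanded by \eqref{eqn:globaladj}.
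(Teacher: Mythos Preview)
Your proposal is correct and follows essentially the same route as the paper: for part (1) you chain the half adjunction $(ma)b \leq c$ with the defining inequality \eqref{eqn:segunda} of $G$, and for part (2) you insert two copies of $\operatorname{i}$ to force the premise into the template $m((na)b) \leq c$ of \eqref{eqn:globaladj} and then read off $\operatorname{e} = H(\operatorname{i},\operatorname{i})$. Your version is in fact slightly more explicit than the paper's, since you justify the intermediate step $(\operatorname{i}a)b \leq ab$ via monotonicity of the application rather than leaving it implicit in ``applying twice the fact that $\operatorname{i}d \leq d$''.
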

\item In what follows we construct in an $\mathcal {OCA}$ with a filter $\Phi$ a new partial order (not necessarily antisymmetric) that will be used to construct a tripos from the $\mathcal {OCA}$. 
\begin{defi}\label{defi:squareord} Assume that the quintuple $(A,\circ,\leq,\operatorname{k},\operatorname{s},\Phi)$ is an $\mathcal {OCA}$ with a filter. We define the relation $\sqsubseteq_\Phi$ in $A$ as follows: 
\[a \sqsubseteq_\Phi b, \text{if and only if}\,\, \exists f \in \Phi: f\circ a \leq b.\]
\end{defi} 

Usually we omit the subscript $\Phi$ in the notation of the relation $\sqsubseteq_\Phi$, and as usual omit the symbol $\circ$ when dealing with the application in $A$ that is written $a \circ b=ab$. 
%Moreover, when there is no danger of confusion we omit all the symbols andwrite $ab=a\circ b=a.b$ for the operation in $A$ of the elements $a,b$. 

\begin{lema} \label{lema:propsq}In the context of Definition \ref{defi:squareord}, we have the following properties of $\sqsubseteq$.
\begin{enumerate}
\item \label{item:propsq1}  The relation $\sqsubseteq$ is a partial order in $A$ --not necessarilty antisymmetric--. 
\item \label{item:propsq2}The partial order $\leq$ is stronger than
  $\sqsubseteq$ (i.e. if $a \leq b$, then $a \sqsubseteq b$). 
\item The order $\sqsubseteq$ has the following compatibility relation
  with the application on $A$: for all $a,a',b,b'\in A$ we have that 
\[\sqleq{a}{b}\quad \text{and} \quad \sqleq{a'}{b'} \Rightarrow
\sqleq{a \wedge a'}{bb'}.\] 
\item  If $f \sqsubseteq (a \rightarrow b)$ with $f \in \Phi$, then $a \sqsubseteq b$.
\item If $A$ has an adjunctor, then for all $a,b \in A$, $a \sqsubseteq b$ if and only if there is an element $f \in \Phi$ such that $f \leq a \rightarrow b$. 
\end{enumerate}
\end{lema}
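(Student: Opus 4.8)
The plan is to verify the five assertions in order, using the combinators produced in Theorem \ref{theo:compoca} together with the basic term-calculus of Theorem \ref{theo:calculusinA} to manufacture the witnessing elements of $\Phi$.

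\medskip

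\emph{Part \eqref{item:propsq1}.} Reflexivity: take $f=\operatorname{i}$, since $\operatorname{i}a\leq a$ by Lemma \ref{lema:ocaoperations}. Transitivity: if $fa\leq b$ and $gb\leq c$ with $f,g\in\Phi$, I want a single $h\in\Phi$ with $ha\leq c$. The natural candidate is the composition combinator: using $F$ from Theorem \ref{theo:compoca} (inequality \eqref{eqn:primera}) one has $(F(g)f)a \leq g(fa)\leq gb\leq c$, so $h=F(g)f$ works, and $h\in\Phi$ because $g,f\in\Phi$ and $F(\Phi)\subseteq\Phi$. (Equivalently one may build $h=\lambda^*x. g(fx)$ directly and invoke the coefficient clause of Theorem \ref{theo:calculusinA}.) Antisymmetry is not claimed, so nothing to do.

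\medskip

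\emph{Part \eqref{item:propsq2}.} If $a\leq b$, then $\operatorname{i}a\leq a\leq b$ by monotonicity of $\circ$ and Lemma \ref{lema:ocaoperations}, and $\operatorname{i}\in\Phi$ (it is built from $\operatorname{k},\operatorname{s}$, which lie in $\Phi$, and $\Phi$ is closed under application). Hence $a\sqsubseteq b$.

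\medskip

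\emph{Part (3) [compatibility with application].} Assume $fa\leq b$ and $ga'\leq b'$ with $f,g\in\Phi$; I must produce $h\in\Phi$ with $h(a\wedge a')\leq bb'$. Recall $a\wedge a' = paa'$ and the projections $p_1(a\wedge a')\leq a$, $p_2(a\wedge a')\leq a'$ from \eqref{eqn:projections}. The idea is to define, via $\lambda^*$, the term $h = \lambda^*x.\, \bigl(f(p_1 x)\bigr)\bigl(g(p_2 x)\bigr)$. Then for $x = a\wedge a'$ we get $h(a\wedge a') \leq (f(p_1(a\wedge a')))(g(p_2(a\wedge a'))) \leq (fa)(ga') \leq bb'$, using monotonicity of $\circ$ in both arguments at the last two steps. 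Since $f,g,p_1,p_2\in\Phi$, Theorem \ref{theo:calculusinA} gives $h\in\Phi$.

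\medskip

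\emph{Part (4).} Suppose $f\sqsubseteq (a\rightarrow b)$ with $f\in\Phi$; unfolding, there is $g\in\Phi$ with $gf\leq (a\rightarrow b)$. Since $f\in\Phi\subseteq A$, $gf\in A$, and the half adjunction property (Definition \ref{defi:oca}, \eqref{item:halfadj}) applied to $gf\leq (a\rightarrow b)$ yields $(gf)a\leq b$. Now $(gf)a = (gf)\circ a$, and $gf\in\Phi$ (closure of $\Phi$ under application), so taking the witness $gf\in\Phi$ shows $a\sqsubseteq b$.

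\medskip

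\emph{Part (5).} Assume $A$ has an adjunctor $\operatorname{e}\in\Phi$. ($\Leftarrow$) If $f\leq a\rightarrow b$ with $f\in\Phi$, then by the half adjunction property $fa\leq b$, so $a\sqsubseteq b$ directly. ($\Rightarrow$) If $a\sqsubseteq b$, there is $g\in\Phi$ with $ga\leq b$; by the adjunction property $\operatorname{e}g\leq (a\rightarrow b)$, and $\operatorname{e}g\in\Phi$ since $\operatorname{e},g\in\Phi$. So $f:=\operatorname{e}g$ is the required element.

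\medskip

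\textbf{Where the work concentrates.} The only nonroutine point is Part (3): one must pick the right polynomial combining the two witnesses through the pairing/projection machinery of Theorem \ref{theo:compoca}, and then check the chain of monotonicity steps goes through with $x$ specialized to $a\wedge a'$. Everything else is a one-line application of $\operatorname{i}$, of $F$ (composition), or of the (half-)adjunction property, plus the standing fact that $\Phi$ is closed under application and contains $\operatorname{k},\operatorname{s},\operatorname{e}$ and hence all $\lambda^*$-terms with coefficients in $\Phi$.
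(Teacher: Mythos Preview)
Your proposal is correct and follows essentially the same approach as the paper. The only cosmetic difference is in Part~(3): you construct the witness directly as $h=\lambda^*x.\,(f(p_1x))(g(p_2x))$ via Theorem~\ref{theo:calculusinA}, whereas the paper assembles it explicitly from combinators as $\operatorname{s}(F(f)p_1)(F(f')p_2)$ using \eqref{eqn:primera} and the $\operatorname{s}$-axiom; these are the same term modulo the $\lambda^*$-translation, so the two arguments coincide.
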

\begin{proof}
\begin{enumerate}
\item 
\begin{enumerate}
 \item $a \sqsubseteq a$ is a consequence of the fact that $\operatorname{i} a \leq a$ --see Lemma \ref{lema:ocaoperations}. Observe that being $\Phi$ closed under the operation of $A$, the element $\operatorname{i} \in \Phi$.  
\item If $a \sqsubseteq b$ and $b \sqsubseteq c$, then $a \sqsubseteq c$.
Indeed, by definition we can find $g,f \in \Phi$ such that:
\[ga \leq b \quad,\quad fb \leq c,\]
and using the monotony of the operation of $A$ we deduce that $f(ga) \leq fb \leq c$. Using Theorem \ref{theo:compoca},\eqref{item:compoca2} we deduce that there is an $h \in \Phi$ such that $ha \leq f(ga) \leq c$, that is our conclusion. 
\end{enumerate}
\item Suppose that  $a \leq b$, then $\operatorname{i}a \leq a \leq b$ so that $a \sqsubseteq b$.
\item By hypotesis, there exist $f,f'\in \Phi$ with the property that: $fa \leq b$ and $f'a'\leq b'$. Call $a_0=a \wedge a'$ and recall that $p_1a_0 \leq a$ and $p_2a_0 \leq a'$ as in Theorem \ref{theo:compoca} \eqref{item:compoca1}. Then $(F(f)p_1)a_0 \leq f(p_1a_0)\leq fa$ and $(F(f')p_2)a_0 \leq f'(p_2a_0)\leq f'a'$ --see \eqref{eqn:primera}. If we abbreviate: $g_1=F(f)p_1, g_2=F(f)p_2$ we deduce that $(g_1a_0)(g_2a_0)\leq bb'$. Using the basic property of $\operatorname{s}$ we obtain that $\operatorname{s}g_1g_2a_0 \leq (g_1a_0)(g_2a_0) \leq bb'$ and reducing again using the inequality \eqref{eqn:primera} we deduce that for some $h \in \Phi$ --depending only on $s,g_1,g_2$--, it is verified that $ha_0 \leq bb'$. This is our conclusion.   
\item If $a \sqsubseteq b$, then for some $f \in \Phi$ we have that $fa\leq b$, then $ef \leq a \rightarrow b$. Conversely, if $f \leq a \rightarrow b$ for $f \in \Phi$, then $fa \leq b$ and $a \sqsubseteq b$.  
\item  If $f \sqsubseteq (a \rightarrow b)$, then there is a $g \in \Phi$ such that $gf \leq (a \rightarrow b)$ and then $(gf)a \leq b$ and then $a \sqsubseteq b$. 
\end{enumerate}
\end{proof}

The theorem that follows, guarantees the complete adjunction property in an $\mathcal {OCA}$ with adjunctor, with respect to the order $\sqsubseteq$ , the ``meet'' operation and the arrow. It will be important for the categorification of the structures.
 
\begin{theo}\label{theo:propsmain} If the original $\mathcal {OCA}$ has an adjunctor, then the partial order $\sqsubseteq$ satisfies the following \emph{``adjunction property''} with respect to the operations $\wedge,\rightarrow$\footnote{Reading the proof the reader may verify that the existence of the adjunctor is not necessary to prove the assertion: $a\sqsubseteq (b \rightarrow c) \Rightarrow a \wedge b \sqsubseteq c$.}\emph{:}  
\begin{equation*}  a \wedge b \sqsubseteq c \ssi a\sqsubseteq (b \rightarrow c).
\end{equation*}
\end{theo}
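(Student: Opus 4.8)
The plan is to prove the two implications of the biconditional separately, in each case unwinding Definition \ref{defi:squareord} and then invoking one of the auxiliary functions constructed above.

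For the implication $a \sqsubseteq (b \rightarrow c) \Rightarrow a \wedge b \sqsubseteq c$ I would argue as follows. By Definition \ref{defi:squareord} there is some $f \in \Phi$ with $f a \leq (b \rightarrow c)$. Applying Observation \ref{obse:converseadj} (the statement concerning the function $G$) with $m = f$ yields $G(f)(a \wedge b) \leq c$; since $G(\Phi) \subseteq \Phi$ we have $G(f) \in \Phi$, so $G(f)$ witnesses $a \wedge b \sqsubseteq c$. This half uses only the half adjunction property and the function $G$, not the adjunctor $\operatorname{e}$, which is exactly what the footnote to the statement asserts.

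For the converse $a \wedge b \sqsubseteq c \Rightarrow a \sqsubseteq (b \rightarrow c)$ I would again unwind the definition: there is $m \in \Phi$ with $m(a \wedge b) \leq c$. The key point is that $a \wedge b$ is by definition $p a b$, i.e. $(p a) b$ recalling left association, so the inequality reads $m\bigl((p a) b\bigr) \leq c$, which is precisely the hypothesis of the implication \eqref{eqn:globaladj} with the variable $n$ instantiated to $p$. Hence $H(m, p) a \leq (b \rightarrow c)$, and since $m \in \Phi$, $p \in \Phi$ and $H(\Phi, \Phi) \subseteq \Phi$, the element $H(m, p) \in \Phi$ witnesses $a \sqsubseteq (b \rightarrow c)$. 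This direction does rely on the adjunctor, since $H$ was built from $\operatorname{e}$ in Corollary \ref{coro:globaladj}.

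There is no genuine obstacle: the combinatorial work was already absorbed into the constructions of $G$ and $H$ in Theorem \ref{theo:compoca}, Corollary \ref{coro:globaladj} and Observation \ref{obse:converseadj}, together with the bookkeeping that these functions map $\Phi$ (resp. $\Phi \times \Phi$) into $\Phi$. The only step requiring a moment's care is the shape-matching in the converse direction, namely recognizing that the term $a \wedge b = (pa)b$ fits the antecedent of \eqref{eqn:globaladj} verbatim, and, throughout, confirming that each produced combinator ($G(f)$ and $H(m,p)$) indeed lies in $\Phi$ because $\Phi$ is closed under application and contains $\operatorname{k}$, $\operatorname{s}$ and $\operatorname{e}$.
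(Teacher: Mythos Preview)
Your proof is correct and follows essentially the same route as the paper's: for each direction you use the very same combinators $G$ and $H$ (built from $F$ and $\operatorname{e}$) that the paper uses, the only difference being that you cite the packaged statements in Corollary \ref{coro:globaladj} and Observation \ref{obse:converseadj} whereas the paper unrolls their proofs inline. In particular your shape-matching $m((pa)b)\leq c$ with $n=p$ in \eqref{eqn:globaladj} is exactly the computation the paper carries out by iterating \eqref{eqn:primera} and then applying the adjunctor.
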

\begin{proof} Assume that $a \sqsubseteq (b \rightarrow c)$, then for some $f \in \Phi$, $fa \leq (b \rightarrow c)$ and then $(fa)b \leq c$. From the inequality \eqref{eqn:segunda}, we deduce that $G(f)(pab)\leq (fa)b \leq c$ and then that $a \wedge b \sqsubseteq c$. 

Conversely, if we assume that $a \wedge b \sqsubseteq c$, then $f((pa)b) \leq c$ for some $f \in \Phi$. Then applying the inequality \eqref{eqn:primera}, we deduce that $(F(f)(pa))b \leq f((pa)b) \leq c$. Applying again the same inequality to the first factor we obtain that  $((F^2(f)p)a)b \leq (F(f)(pa))b \leq c$ and then we deduce that: $\operatorname{e}((F^2(f)p)a) \leq (b \rightarrow c)$. Using once again the inequality \eqref{eqn:primera} we obtain that $(F(\operatorname{e})(F^2(f)p))a \leq \operatorname{e}((F^2(f)p)a) \leq (b \rightarrow c)$, then as $\operatorname{e},F(\operatorname{e}),f,F(f),F^2(f),p \in \Phi$, we conclude that $a \sqsubseteq (b \rightarrow c)$. 
\end{proof}
\section{Construction of an $\mathcal {OCA}$ from a $\mathcal {PAKS}$.}
\label{section:six}
\label{section:pakstooca}
\item In this section we show how to perform a natural construction of an $\mathcal {OCA}$ from a $\mathcal {PAKS}$.
\begin{defi}\label{defi:paksoca}
Assume we have a $\mathcal {PAKS}$
\[(\Lambda,\Pi,\Perp,\operatorname{app},\operatorname{save},\operatorname{push}, \operatorname{K},\operatorname{S},\operatorname{cc}),\]
and define a set $A$, an order, an application, an
implication, the combinators $\operatorname{k, s}$, and an 
adjunctor $\operatorname{e}$ in the following manner: 
\begin{enumerate}
\item $A=\mathcal P_\perp(\Pi)$;
\item For a pair of elements $a,b \in A$ we say that $a \leq b$ iff $a \supseteq b$. 
\item For a pair of elements $a,b \in A$ we define $a \circ b$ as in Definition \ref{defi:maps}, \eqref{item:maps1}. In other words: \[a\circ b=({}^{\perp}\{\pi \in \Pi: \forall t \in {}^\perp a , \forall s \in {}^\perp b \quad t \perp s.\pi\})^{\perp}=({}^{\perp}\{\pi \in \Pi: {}^\perp a \subseteq ({}^\perp b .\,\pi)^\perp\})^{\perp}=({}^\perp \{\pi \in \Pi: a \supseteq ({}^\perp b .\,\pi)\})^{\perp}.\]
\item For a pair of elements $a,b \in A$ we define $a \rightarrow b$ as in Definition \ref{defi:maps}, \eqref{item:maps2}. In other words: \[a \rightarrow b= ({}^\perp\operatorname{push}({}^\perp a,b))^{\perp}=({}^\perp(a^{\perp} \cdot b))^{\perp}.\]
\item We define the following elements of $A$:
\[\operatorname{k}=\{\pi \in \Pi: \operatorname{K} \perp \pi\}=\{\operatorname{K}\}^\perp\,,\,\operatorname{s}=\{\pi \in \Pi: \operatorname{S} \perp \pi\}=\{\operatorname{S}\}^\perp.\]
\item We define $\operatorname{e}=\{\operatorname{E}\operatorname{E}\}^\perp$.
\end{enumerate}
\end{defi}  
\item We prove the following crucial theorem.
\begin{theo}\label{theo:pakstooca} Consider the $\mathcal{PAKS}$: \[(\Lambda,\Pi,\Perp,\operatorname{app},\operatorname{save},\operatorname{push}, \operatorname{K},\operatorname{S},\operatorname{cc}),\]
and the quintuple as presented in Definition \ref{defi:paksoca}: \[(A,\leq,\circ, \operatorname{k},\operatorname{s}).\] 

The above quintuple forms an $\mathcal{OCA}$. Moreover, the map
$\rightarrow$ is an implication and the element $\operatorname{e}$ is
an adjunctor and if the $\mathcal {AKS}$ is classical, so is the
$\mathcal {OCA}$.  
\end{theo}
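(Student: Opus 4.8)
The plan is to run through the clauses of Definition \ref{defi:oca} one at a time, translating every relation through the dictionary ``$a\le b$'' $=$ ``$a\supseteq b$''. Almost all of these clauses turn out to be immediate restatements of results already proved above; the only genuine computations are the two combinator laws, and even those follow a single pattern.

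First I would dispose of the bookkeeping. That $A=\mathcal P_\perp(\Pi)$ is a set and that reverse inclusion is a partial order is clear; that $\circ$ and $\rightarrow$ actually take values in $A$ is exactly Definition \ref{defi:maps}, \eqref{item:maps1} and \eqref{item:maps2}; and $\operatorname k=\{\operatorname K\}^\perp$, $\operatorname s=\{\operatorname S\}^\perp$, $\operatorname e=\{\operatorname E\operatorname E\}^\perp$ all lie in $\mathcal P_\perp(\Pi)$ because $({}^\perp(L^\perp))^\perp=L^\perp$ for every $L$, by Observation \ref{obse:initialrl}. Monotonicity of $\circ$ for the product order and the monotony requirement on $\rightarrow$ in Definition \ref{defi:oca}, \eqref{item:monotony}, are obtained by unfolding $a\circ b=({}^\perp({}^\perp b\leadsto a))^\perp$ and $a\rightarrow b=({}^\perp({}^\perp a\cdot b))^\perp$ and using that ${}^\perp(-)$ and $(-)^\perp$ are antimonotone while the conductor $\leadsto$, the push and the application are monotone in the relevant variables for set inclusion; composing two antimonotone steps gives the asserted monotonicity, and the lone extra antimonotone step through the first argument of $\rightarrow$ gives antimonotony there.

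Next come the two combinator laws, which I would prove by a uniform ``peeling'' argument resting on the inclusion $P\circ Q\supseteq\{\sigma\in\Pi:{}^\perp Q.\sigma\subseteq P\}$ (immediate from $({}^\perp X)^\perp\supseteq X$ applied to Definition \ref{defi:maps}, \eqref{item:maps1}). For $\operatorname k ab\le a$, i.e. $a\subseteq(\operatorname k\circ a)\circ b$, I take $\rho\in a$; by the displayed inclusion it suffices that $s.\rho\in\operatorname k\circ a$ for every $s\in{}^\perp b$, and peeling once more it suffices that $u.s.\rho\in\{\operatorname K\}^\perp$ for every $u\in{}^\perp a$; but $u\perp a\ni\rho$ gives $u\perp\rho$, so axiom (S2) yields $\operatorname K\perp u.s.\rho$, as required. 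For $\operatorname s abc\le ac(bc)$, i.e. $(a\circ c)\circ(b\circ c)\subseteq((\operatorname s\circ a)\circ b)\circ c$, I take $\pi\in(a\circ c)\circ(b\circ c)$ and peel off the three outer occurrences of $\circ$ applied to $\operatorname s$; the problem reduces to showing $\operatorname S\perp u.v.w.\pi$ for arbitrary $u\in{}^\perp a$, $v\in{}^\perp b$, $w\in{}^\perp c$. By axiom (S3) this holds as soon as $uw(vw)\perp\pi$, and the latter follows from $\pi\in(a\circ c)\circ(b\circ c)$ together with the consequence of (S1) recorded in Lemma \ref{lema:axiomset} (its clause ($\mathbb{S}1$)(1)): since $u\perp a$, $w\perp c$ give $uw\perp a\circ c$ and $v\perp b$, $w\perp c$ give $vw\perp b\circ c$, we get $(uw)(vw)\perp(a\circ c)\circ(b\circ c)\ni\pi$.

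Finally, the structural properties are direct translations of earlier theorems. The half adjunction property of Definition \ref{defi:oca}, \eqref{item:halfadj}, is Theorem \ref{theo:adjunction} read with $P=a$, $Q=b$, $R=c$ (namely ``$b\rightarrow c\subseteq a$ implies $c\subseteq a\circ b$''). That $\operatorname e$ is an adjunctor is the implication \eqref{eqn:converse} of Theorem \ref{theo:main} with the same substitution, since $(\operatorname E\operatorname E)^\perp\circ a=\operatorname e\circ a$. For the classical case I would put $\operatorname c:=\{\operatorname{cc}\}^\perp\in A$ and invoke Lemma \ref{lema:axiomset}, \eqref{item:classicalcc}(c), which gives $\operatorname{cc}\perp((P\rightarrow Q)\rightarrow P)\rightarrow P$, i.e. $((a\rightarrow b)\rightarrow a)\rightarrow a\subseteq\operatorname c$, i.e. $\operatorname c\le(((a\rightarrow b)\rightarrow a)\rightarrow a)$ for all $a,b$. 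The main obstacle is essentially clerical: keeping the direction of every inclusion straight under the order reversal, with the longest single computation being the $\operatorname s$-law and its threefold peeling; everything else is a one-line rephrasing of a result already in hand.
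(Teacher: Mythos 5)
Your proposal is correct and follows essentially the same route as the paper: reverse inclusion as the order, monotonicity by unfolding the definitions of $\circ$ and $\rightarrow$, the combinator laws from (S2), (S3) and Lemma \ref{lema:axiomset} ($\mathbb{S}1$), the half adjunction from Theorem \ref{theo:adjunction}, the adjunctor from Theorem \ref{theo:main}, and classicality from Lemma \ref{lema:axiomset}, \eqref{item:classicalcc}. The only cosmetic difference is that the paper first establishes $\operatorname{k}a\leq(b\rightarrow a)$ and $\operatorname{s}ab\leq c\rightarrow(ac)(bc)$ and then invokes the half adjunction, whereas you peel the occurrences of $\circ$ directly via $X\subseteq({}^\perp X)^\perp$; the underlying computation is identical.
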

\begin{proof} It is clear that $\circ$ is an application in $A$, that
  $\leq$ is a partial order, and we have defined the elements
  $\operatorname{k}$ and $\operatorname{s}$.  
Concerning the monotony of the application we have to prove that if: $a \supseteq a'\,,\,b \supseteq b'$, then if $\pi \in \Pi$ satisfies that $a'  \supseteq ({}^\perp b' . \pi)$, then $a \supseteq ({}^\perp b . \pi)$.

We have that $a \supseteq a' \supseteq ({}^\perp b' . \pi)$. As $b
\supseteq b'$, ${}^\perp b' \supseteq {}^\perp b$ and then $({}^\perp
b' . \pi) \supseteq ({}^\perp b . \pi)$ and the proof of the monotony
of $\circ $ is finished.  

The monotony and antimonotony of the map $\rightarrow$ is similarly proved. 
The fact that the arrow $\rightarrow$ satisfies the \emph{half
  adjunction property}: if $a \leq (b \rightarrow c)$ then $ab \leq
c$, was established in Theorem \ref{theo:adjunction}.

Next, we prove that $\operatorname{k}ab \leq a$.  
We have seen that $\operatorname{K} \in \big({}^\perp a.({}^\perp b
. a)\big)^{\perp}$ and that means that $\{\operatorname{K}\} \subseteq
\big({}^\perp a.({}^\perp b . a)\big)^{\perp}$ that implies
$\operatorname{k} \supseteq \Big({}^\perp\big({}^\perp a.({}^\perp b
. a)\big)\Big)^{\perp} \supseteq {}^\perp a.({}^\perp b . a)$. 

Now, from the above inclusion we deduce that:
$\operatorname{k}a=({}^\perp\{\pi \in \Pi: \operatorname{k} \supseteq
{}^\perp a . \pi\})^{\perp} \supseteq \{\pi \in \Pi: \operatorname{k}
\supseteq {}^\perp a . \pi\} \supseteq {}^\perp b . a$, or in other
words that $\operatorname{k}a \leq (b \rightarrow a)$ --see Definition
\ref{defi:maps}, \eqref{item:maps2}--. Using the half adjunction
property \ref{theo:adjunction}, we deduce that $\operatorname{k}ab
\leq a$.  

The condition $\operatorname{s}abc \leq (ac)(bc)$ can be proved similarly.

Indeed, it is enough to prove that $\operatorname{s}ab \leq c
\rightarrow (ac)(bc)$ that means that
$({}^\perp\{\pi:\operatorname{s}a \supseteq {}^\perp b .\pi
\})^{\perp} \supseteq \Big({}^\perp\big({}^\perp c
.(ac)(bc)\big)\Big)^{\perp}$. Then, it is enough to prove that
$\{\pi:\operatorname{s}a \supseteq {}^\perp b .\pi \} \supseteq
  {}^\perp c .(ac)(bc)$ or $\operatorname{s}a \supseteq {}^\perp b
  .{}^\perp c .(ac)(bc)$. Now, as $\operatorname{s}a=\{\pi \in
  \Pi:\operatorname{s} \supseteq {}^\perp a . \pi\}$ we have to check
  that $\operatorname{s} \supseteq {}^\perp a . {}^\perp b .{}^\perp c
  .(ac)(bc)$ or equivalently that $\operatorname{S} \perp {}^\perp a
  . {}^\perp b .{}^\perp c .(ac)(bc)$ or $\operatorname{S} \in
  {}^\perp({}^\perp a . {}^\perp b .{}^\perp c .(ac)(bc))$.    

Hence, we take $t \perp a$, $s \perp b$, $u \perp c$ and $\pi \in
(ac)(bc)$ and using Lemma \ref{lema:axiomset} (S1)
\eqref{item:circdiamond}, we deduce that $tu(su) \perp \pi$. 

Using now {\Large{\bf{\ref{item:axiomsperp}.}}} condition (S3), we
prove that $\operatorname{S} \perp t.s.u.\pi$, that is the result we
want.  

Finally the proof that $\operatorname{e}$ as introduced above
--Definition \ref{defi:paksoca}--, is an adjunctor is the content of
Theorem \ref{theo:main}.  

If we take $\operatorname{c}=\operatorname{cc}^\perp$, we proved in
Lemma \ref{lema:axiomset}, $(\mathbb{S}4)$ that
$\operatorname{cc} \in {}^\perp(((a \rightarrow b) \rightarrow
a)\rightarrow a)$, that implies that $\operatorname c \supseteq
({}^\perp(((a \rightarrow b) \rightarrow a)\rightarrow
a))^\perp=(((a\rightarrow b)\rightarrow a)\rightarrow a)$,
i.e. $\operatorname c \leq (((a\rightarrow b)\rightarrow a)\rightarrow
a)$.  
\end{proof}
\section{Construction of an $\mathcal {OCA}$ with a filter from an $\mathcal {AKS}$.} \label{section:seven}
\item Assume that we have a
  decuple \[(\Lambda,\Pi,\Perp,\operatorname{app},\operatorname{save},\operatorname{push}, 
  \operatorname{K},\operatorname{S},\operatorname{cc},\operatorname{QP}),\]
  where the first nine elements define a $\mathcal {PAKS}$ and the
  last $\operatorname{QP} \subseteq \Lambda$ is a subset of terms that
  contains the distinguished elements
  $\operatorname{K},\operatorname{S}$ and $\operatorname{cc}$ and is
  closed by application.  
\begin{defi} Define the subset $\Phi$ of $A=\mathcal P_\perp(\Pi)$ as
  follows: \[\Phi=\{f \in A: {}^\perp f \cap \operatorname{QP} \neq
  \emptyset\}=\{f \in A: \exists t \in \operatorname{QP}, t \perp
  f\}.\]  
\end{defi}
\begin{lema}\label{lema:akstoocaf} The subset $\Phi \subseteq A$ is a
  filter in $A$ --see Definition \emph{\ref{defi:filter}} --that
  contains $\operatorname{e}$ and $\operatorname{c}$.  
\end{lema}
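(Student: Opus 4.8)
The plan is to check directly the four clauses (F1)--(F4) of the definition of a filter (Definition \ref{defi:filter}), for $A=\mathcal P_\perp(\Pi)$ equipped with $\circ$, $\leq$, $\operatorname{k}$, $\operatorname{s}$, the adjunctor $\operatorname{e}$ and the element $\operatorname{c}$ exactly as introduced in Definition \ref{defi:paksoca} and in the proof of Theorem \ref{theo:pakstooca}. Throughout, the only facts about $\operatorname{QP}$ that will be used are that it contains $\operatorname{K},\operatorname{S},\operatorname{cc}$ and is closed under the term application, i.e. the two axioms (Si) and (Sii) of an $\mathcal {AKS}$.

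Three of the four clauses are immediate unwindings. Since $\operatorname{k}=\{\operatorname{K}\}^\perp$ we have $\operatorname{K}\in{}^\perp\operatorname{k}$, and as $\operatorname{K}\in\operatorname{QP}$ this gives ${}^\perp\operatorname{k}\cap\operatorname{QP}\neq\emptyset$, i.e. $\operatorname{k}\in\Phi$; the same argument with $\operatorname{S}$ shows $\operatorname{s}\in\Phi$, proving (F2). For (F3): $\operatorname{e}=\{\operatorname{EE}\}^\perp$, so $\operatorname{EE}\in{}^\perp\operatorname{e}$, and by Observation \ref{obse:EEQP} we have $\operatorname{EE}\in\operatorname{QP}$; hence $\operatorname{e}\in\Phi$. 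For (F4): $\operatorname{c}=\{\operatorname{cc}\}^\perp$, so $\operatorname{cc}\in{}^\perp\operatorname{c}\cap\operatorname{QP}$ and $\operatorname{c}\in\Phi$. These same computations also yield the two extra assertions in the statement, namely that $\Phi$ contains $\operatorname{e}$ and $\operatorname{c}$.

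The only clause requiring an argument beyond chasing definitions is closure under application, (F1). Given $f,g\in\Phi$, choose $t\in{}^\perp f\cap\operatorname{QP}$ and $s\in{}^\perp g\cap\operatorname{QP}$, so that $t\perp f$ and $s\perp g$. I would then invoke Lemma \ref{lema:axiomset} (S1) \eqref{item:circdiamond}, whose third formulation of part (1) states precisely that $t\perp f$ and $s\perp g$ imply $ts\perp f\circ g$. Since $\operatorname{QP}$ is closed under application we get $ts\in\operatorname{QP}$, hence $ts\in{}^\perp(f\circ g)\cap\operatorname{QP}$, and therefore $f\circ g\in\Phi$; note $f\circ g$ indeed lies in $A=\mathcal P_\perp(\Pi)$ by Definition \ref{defi:maps} \eqref{item:maps1}. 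I do not expect a real obstacle here: the statement is essentially bookkeeping, and the one genuinely substantive ingredient --- that realizers of $f$ and $g$ produce a realizer of $f\circ g$ which moreover stays in $\operatorname{QP}$ --- has already been isolated as condition (1) of Lemma \ref{lema:axiomset} together with axiom (Sii). The only point to be careful about is to use consistently the perpendicularity form of the $\mathcal {PAKS}$ axioms, i.e. the reformulation in {\Large{\bf{\ref{item:axiomsperp}.}}}
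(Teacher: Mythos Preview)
Your proposal is correct and follows essentially the same route as the paper: the paper verifies the same clauses in the same way, citing the same Observation \ref{obse:EEQP} for $\operatorname{EE}\in\operatorname{QP}$ and the same part of Lemma \ref{lema:axiomset} ($\mathbb{S}1$)(1) for closure under application. The only cosmetic difference is that the paper also records, in passing, the upward-closure property of $\Phi$ with respect to $\leq$, which is not part of the filter definition and which you rightly omit.
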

\begin{proof}
\newcounter{ncounter}
\begin{list}{(F\arabic{ncounter})}{\usecounter{ncounter}}

\item If $f \in \Phi$ and $a \in A$ is $f \leq a$, then $a \in
  \Phi$. This is because, by hypothesis we have an element $t_f \in
      {}^\perp f \cap \operatorname{QP} \subseteq {}^\perp a \cap
      \operatorname{QP}$. Hence, $a \in \Phi$.  
\item The subset $\Phi$ is closed under application because in
  accordance with Lemma \ref{lema:axiomset}, (S1), (5), if $t_f\in
  {}^\perp f \cap \operatorname{QP}$ and $t_g\in {}^\perp g \cap
  \operatorname{QP}$ then $t_ft_g\in {}^\perp f {}^\perp g \cap
  \operatorname{QP} \subseteq {}^\perp(f \circ g) \cap
  \operatorname{QP}$.   
\item $\operatorname{k},\operatorname{s} \in \Phi$ because
  $\operatorname{K} \in {}^\perp \operatorname{k} \cap
  \operatorname{QP}$ and $\operatorname{S} \in {}^\perp
  \operatorname{s} \cap \operatorname{QP}$.  
\item $\operatorname{e} \in \Phi$ because $\operatorname{EE} \in
  {}^\perp\operatorname{e} \cap \operatorname{QP}$ --see Observation
  \ref{obse:EEQP}.\item Being
  $\operatorname{c}=\{\operatorname{cc}\}^\perp$, it is clear that:
  $\operatorname{cc} \in {}^\perp\operatorname{c} \cap
  \operatorname{QP}$.  
\end{list}
\end{proof}
\item
Now we have enough machinery in order to answer the following
question: \emph{
  Is the filter built as above closed under meets?}

Assume that it is closed under
  meets. In this case there is an element $\Omega \in \Phi$ with
  the property that 
$\Omega \leq \operatorname{k}$ and also $\Omega \leq \operatorname{s}$. 
\begin{enumerate}
\item In that situation a direct computation guarantees that $\Omega
  \Omega \operatorname{k}\operatorname{k}$ is at the same time $\Omega
  \Omega \operatorname{k}\operatorname{k} \leq k$ and $\Omega \Omega
  \operatorname{k}\operatorname{k} \leq \operatorname{sk}$. 
Indeed, $\Omega \Omega \operatorname{k}\operatorname{k}\leq
\operatorname{skkk} \leq \operatorname{kk(kk)} \leq
\operatorname{k}$. Also, $\Omega \Omega
\operatorname{k}\operatorname{k}\leq \operatorname{kskk} \leq
\operatorname{sk}$. 
\item Hence, given $f,g \in \Phi$ we have that $\Omega \Omega
  \operatorname{k}\operatorname{k}fg \leq \operatorname{k}fg \leq f$
  and also: $\Omega \Omega \operatorname{k}\operatorname{k}fg \leq
  \operatorname{sk}fg \leq \operatorname {k}g (fg) \leq g$.  

Then, in this situation for any pair $f,g \in \Phi$ the element
$\Omega \Omega \operatorname{k}\operatorname{k}fg \leq f$ and also
$\Omega \Omega \operatorname{k}\operatorname{k}fg \leq g$.    
\item Consider an $\operatorname{AKS}$ and the corresponding $\mathcal
  {OCA}$. We have that $\Omega \supseteq \operatorname{s} \cup
  \operatorname{k}$ and ${}^\perp\Omega \subseteq
  {}^\perp(\{\operatorname{S}\}^\perp \cup \{\operatorname{K}\}^\perp) 
  \subseteq {}^\perp(\{\operatorname{S}\}^{\perp}) \cap
  {}^\perp(\{\operatorname{K}\}^{\perp})$.
\item The above condition means that: $\forall Q \in {}^\perp\Omega,
  \forall \pi \in \Pi, (S\star \pi \in \Perp \Rightarrow Q \star
  \pi\in\Perp)\,\,\, \text{and}\,\,\,(\operatorname{K}\star \pi \in \Perp \Rightarrow
  Q \star \pi)\in\Perp$. 
\item Consider the $\mathcal{AKS}$ defined by the $\mathcal{KAM}$ with
  only substitutive and deterministic instructions,
  defining $\Perp=\{t\star \pi \succ \operatorname{S} \star
  \alpha\,\,\text{or} \,\, t\star \pi \succ \operatorname{K} \star
  \beta\}$, with $\alpha,\beta$ different stack constants. Since, 
  $\alpha\in \{\operatorname{S}\}^{\perp},  \beta \in
  \{\operatorname{K}\}^{\perp}$ we get 
  $\alpha, \beta \in {}^\perp\Omega$. In this situation: if $Q \in
  {}^\perp\Omega$ we get from the statement above $Q \star \alpha, Q
  \star \beta \in \Perp$, since $\alpha\in \{\operatorname{K}\}^\perp$,
  $\beta\in\{\operatorname{S}\}^\perp$. By definition of $\Perp$: $Q \star \alpha
  \succ \operatorname{S} \star \alpha$ or $Q \star \alpha \succ
  \operatorname{K} \star \beta$.  
\item Assume now that $Q \in \operatorname{QP}$. Then $q$ does not
  contain $k_\pi$ and cannot change the stack constant, and hence: $Q
  \star \alpha \succ \operatorname{S} \star \alpha$. By substitution $Q \star \beta
  \succ \operatorname{S} \star \beta$. But, again because $Q$ cannot change the stack
  constant and $q\star \beta\in\Perp$, we get $Q\star\beta\succ
  \operatorname{K}\star\beta$. Thus we obtain $\operatorname{S}\star\beta\succ \operatorname{K}\star\beta$ or
  $\operatorname{K}\star\beta\succ \operatorname{S}\star\beta$ which is impossible because both
  $\operatorname{K}\star\beta$ and $\operatorname{S}\star\beta$ does not recduce because they does
  not have arguments.  Then, we conclude that $\Omega^\perp \cap
  \operatorname{QP} = \emptyset$. This contradicts the assumption that
  $\Omega \in \Phi$.   
\end{enumerate}
A model where it is true that a pair of elements of $\Phi$ always has
a minimum is when $\Perp=\emptyset$. Here
$\operatorname{s}=\{\operatorname{S}\}^\perp=\{\operatorname{K}\}^\perp=\operatorname{k}$,
being $\operatorname{s}=\operatorname{k}$ the set $\Phi$ is a filter
in the usual sense. 
\section{From $\mathcal{OCA}$s to Tripos}
\label{section:eight}
\item Assume we have an $\mathcal {OCA}$:
  $(A,\circ,\leq,\operatorname{k},\operatorname{s})$, that is equipped
  with an implication, an adjunctor and a filter --called
  respectively: $\rightarrow\,,\,
  \operatorname{e}\,\,\text{and}\,\,\Phi$. 

Let $I$ be an arbitrary set and consider $A^I$ the cartesian product
of $I$ copies of $A$ --viewed in general as the set of functions
$A^I=\{\varphi:I \rightarrow A: \varphi\,\, \text{is a function}\}$. 
\begin{obse}\label{obse:orders}We consider some properties of the
  order and the operations in an $\mathcal{OCA}$ and its extensions to
  cartesian products.  
\begin{enumerate} 
\item We have the following orders in $A^I$. 
\begin{enumerate}  
\item \label{item:cart}{\emph{Cartesian product of $\leq$}}: If
  $\varphi,\psi \in A^I$, $\varphi \leq \psi$ if and only if $\forall
  i \in I: \varphi(i) \leq \psi(i)$. 
\item \label{item:sq} {\emph{Cartesian product of $\sqsubseteq$}}: If
  $\varphi,\psi \in A^I$, $\varphi \sqsubseteq \psi$ if and only if
  $\forall i \in I, \exists f_i \in \Phi: f_i \varphi(i) \leq
  \psi(i)$. 
\item \label{item:ntile} {\emph{Entilement order}}:\quad\quad\, If
  $\varphi,\psi \in A^I$, $\varphi \vdash \psi$ if and only if
  $\exists f \in \Phi, \forall i \in I: f \varphi(i) \leq \psi(i)$. 
\end{enumerate}
\item In the case that $\Phi$ has $\operatorname{inf}$, it is clear that the orders listed in (b) and (c) above, are equivalent. 
\item Clearly the first order above is reflexive, antisymmetric and
  transitive; the second and third orders are reflexive and
  transitive. The proof of these last properties are identical to the
  proofs of the corresponding properties of the order $\sqsubseteq$ in
  $A$. 
\item One can define the arrow in $A^I$ simply as: $(\varphi \rightarrow \psi)(i)= \varphi(i) \rightarrow \psi(i)$.
\item The ``meet'' in $A^I$ can be defined as $(\varphi \wedge \psi)(i)=
\varphi(i)\wedge \psi(i)=p\varphi(i)\psi(i)$.
\item A manner to view the entilement order is the following. Assume
  that we take $A$ to be an $\mathcal {OCA}$ as above and that $M$ is
  an $A$--module, i,e, a set $M$ together with an operation
  $(a,m)\mapsto a.m:A\times M \rightarrow M$. The standard example of
  an $A$--module is $A^I$, with the operation
  $(a.\psi)(i)=a\psi(i)$. If we have a partial orden $\leq_M \subset M
  \times M$, we can define a new order $\sqsubseteq_M \subset M \times
  M$ as follows: if $m,n \in M$ we say that $m \sqsubseteq_M n$ if and
  only if there exist an element $f \in \Phi$ such that $f.m \leq_M
  n$. In this sense the order appearing in (1)(c) above --the
  entilement order-- is obtained from the cartesian product order
  appearing in (1)(a), by the process just mentioned. 

\end{enumerate}
\end{obse}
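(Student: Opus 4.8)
The plan is as follows. Most of Observation \ref{obse:orders} is a list of definitions (the three orders on $A^I$, the pointwise arrow and meet, the module reformulation), so the only assertions carrying mathematical content are: the order-theoretic properties of item (3), the equivalence of the cartesian $\sqsubseteq$ and the entailment order $\vdash$ when $\Phi$ is closed under $\operatorname{inf}$ (item (2)), and the identification in item (6) of $\vdash$ with the order produced by the $A$-module construction. I would prove these three points in turn, in each case reducing to the one-variable computations already performed in Lemma \ref{lema:propsq} and Theorem \ref{theo:compoca}.

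First, the order properties. The cartesian product of $\leq$ is reflexive, antisymmetric and transitive componentwise, directly from the fact that $\leq$ is a partial order on $A$; there is nothing to do. For $\sqsubseteq$ and $\vdash$ I would replay the argument of Lemma \ref{lema:propsq}, \eqref{item:propsq1} coordinate by coordinate. Reflexivity uses that $\operatorname{i}\varphi(i)\leq\varphi(i)$ for all $i$ (Lemma \ref{lema:ocaoperations}) together with $\operatorname{i}\in\Phi$. Transitivity: if $f\varphi(i)\leq\psi(i)$ and $g\psi(i)\leq\chi(i)$ then $g(f\varphi(i))\leq\chi(i)$ by monotonicity of the application, and inequality \eqref{eqn:primera} applied to the combinator $F$ of Theorem \ref{theo:compoca} gives $(F(g)f)\varphi(i)\leq g(f\varphi(i))\leq\chi(i)$, with $F(g)f\in\Phi$ since $F(\Phi)\subseteq\Phi$ and $\Phi$ is closed under application. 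The point to notice is that for $\vdash$ the witnesses $f,g$ are uniform in $i$, so the single element $F(g)f$ serves for all $i$ simultaneously, whereas for $\sqsubseteq$ one does this for each $i$ separately; neither relation is antisymmetric, just as for $\sqsubseteq$ on $A$.

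Next, item (2). That $\varphi\vdash\psi$ implies $\varphi\sqsubseteq\psi$ is immediate, taking the constant family. For the converse, assume $\Phi$ is closed under $\operatorname{inf}$ and that $\varphi\sqsubseteq\psi$; choosing $f_i\in\Phi$ with $f_i\varphi(i)\leq\psi(i)$ for each $i$ and setting $f=\operatorname{inf}_i f_i\in\Phi$, the inequality $f\leq f_i$ and monotonicity of the application give $f\varphi(i)\leq f_i\varphi(i)\leq\psi(i)$ for every $i$, i.e.\ $\varphi\vdash\psi$. Finally, item (6) is just an unfolding: with $M=A^I$, action $(a.\psi)(i)=a\psi(i)$ and $\leq_M$ the cartesian product of $\leq$, the relation $\varphi\sqsubseteq_M\psi$ --- there is $f\in\Phi$ with $f.\varphi\leq_M\psi$ --- reads $f\varphi(i)\leq\psi(i)$ for all $i$, which is the definition of $\varphi\vdash\psi$.

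I do not foresee a genuine difficulty; the whole argument is a pointwise transcription of Lemma \ref{lema:propsq} together with the unravelling of two definitions. The only step requiring a moment's care is the reading of ``$\Phi$ has $\operatorname{inf}$'' in item (2): it must be taken to mean that any family of elements of $\Phi$ has an infimum in $A$ which again belongs to $\Phi$, and one must use that this infimum is compatible with the (monotone) application. Once that convention is pinned down, the equivalence of the second and third orders, and hence the whole observation, follows.
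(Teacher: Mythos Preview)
Your proposal is correct and follows exactly the approach the paper indicates: the paper gives no explicit proof here, merely asserting that the reflexivity and transitivity of $\sqsubseteq$ and $\vdash$ are ``identical to the proofs of the corresponding properties of the order $\sqsubseteq$ in $A$'' (Lemma \ref{lema:propsq}), and declares item (2) to be ``clear''. You have spelled out precisely those coordinate-wise arguments --- using $\operatorname{i}$ for reflexivity, the combinator $F$ from Theorem \ref{theo:compoca} for transitivity, and the infimum for item (2) --- so your write-up is a faithful and more detailed execution of what the paper leaves implicit.
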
 

\item The following ``complete adjunction property --or simply
  adjunction property--'' of the order ``entile'' is important. It is
  worth noticing that it does not follow directly from the
  corresponding property proved for $A$ in Theorem
  \ref{theo:propsmain} --i.e the property valid for all $a,b,c \in A$
  thay states that $\sqleq{a \wedge b}{c} \Leftrightarrow \sqleq{a}{(b
    \rightarrow c)}$--. We need the subtler properties given in
  Corollary \ref{coro:globaladj} and Observation
  \ref{obse:converseadj}.  
\begin{theo}\label{theo:entileadjunc} In the notations above for an
  $\mathcal{OCA}$ with implication, adjunctor and filter, the
  following is true for all $\varphi, \psi, \theta  \in
  A^I$: \[\varphi \wedge \psi \vdash \theta \Longleftrightarrow
  \varphi \vdash (\psi \rightarrow \theta).\] 
\end{theo}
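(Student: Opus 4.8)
The plan is to unwind both sides against the definitions and then reduce each implication to one of the \emph{global} adjunction statements already proved for $A$: Corollary \ref{coro:globaladj}, which supplies the function $H$ with $H(\Phi,\Phi)\subseteq\Phi$ satisfying \eqref{eqn:globaladj}, and the first item of Observation \ref{obse:converseadj}, which supplies $G$ with $G(\Phi)\subseteq\Phi$. The reason the pointwise statement of Theorem \ref{theo:propsmain} does not suffice is that a witness for $\vdash$ must be a \emph{single} element of $\Phi$ good for every index $i\in I$; since $H$ and $G$ are genuine functions $A\times A\to A$ and $A\to A$, feeding them a fixed realizer produces exactly such an index-independent element. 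Throughout I use that $(\varphi\wedge\psi)(i)=\varphi(i)\wedge\psi(i)=p\,\varphi(i)\,\psi(i)$ and $(\varphi\rightarrow\psi)(i)=\varphi(i)\rightarrow\psi(i)$, together with $p\in\Phi$.

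For the forward implication, I would assume $\varphi\wedge\psi\vdash\theta$. Unfolding the entailment order and the pointwise meet, this gives an $f\in\Phi$ with $f\big((p\,\varphi(i))\,\psi(i)\big)\leq\theta(i)$ for all $i\in I$. I then read this as the hypothesis of \eqref{eqn:globaladj} with $m:=f$, $n:=p$, $a:=\varphi(i)$, $b:=\psi(i)$, $c:=\theta(i)$, obtaining $H(f,p)\,\varphi(i)\leq\big(\psi(i)\rightarrow\theta(i)\big)=(\psi\rightarrow\theta)(i)$ for every $i$. Since $f,p\in\Phi$ and $H(\Phi,\Phi)\subseteq\Phi$, the element $H(f,p)\in\Phi$ witnesses $\varphi\vdash(\psi\rightarrow\theta)$.

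For the converse, I would assume $\varphi\vdash(\psi\rightarrow\theta)$, i.e.\ there is $g\in\Phi$ with $g\,\varphi(i)\leq\big(\psi(i)\rightarrow\theta(i)\big)$ for all $i$. Here I invoke the consequence of the inequality \eqref{eqn:segunda} recorded in Observation \ref{obse:converseadj} (the statement $ma\leq(b\rightarrow c)\Rightarrow G(m)\big((pa)b\big)\leq c$), with $m:=g$, $a:=\varphi(i)$, $b:=\psi(i)$, $c:=\theta(i)$: it yields $G(g)\big((p\,\varphi(i))\,\psi(i)\big)=G(g)\big(\varphi(i)\wedge\psi(i)\big)\leq\theta(i)$ for every $i$. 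As $g\in\Phi$ and $G(\Phi)\subseteq\Phi$, the element $G(g)\in\Phi$ witnesses $\varphi\wedge\psi\vdash\theta$; note this half uses only $G$ and not the adjunctor, which is the point of the footnote to the statement.

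I do not expect a real obstacle: once the definitions are unwound, each direction is essentially a single application of the corresponding global lemma. The only things to keep an eye on are the membership bookkeeping ($H(f,p)\in\Phi$, $G(g)\in\Phi$, $p\in\Phi$) and the observation — which is really the content of the theorem — that the realizers produced are the same for all $i$, so they live in $A$ and not merely in $A^I$.
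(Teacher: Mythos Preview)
Your proof is correct and follows essentially the same approach as the paper: the forward direction via Corollary~\ref{coro:globaladj} with the uniform witness $H(f,p)\in\Phi$, and the converse via Observation~\ref{obse:converseadj} with $G(g)\in\Phi$. Your additional remarks---that the realizers must be index-independent and that the converse does not use the adjunctor---are exactly the points the paper emphasizes in its preamble and footnote.
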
 
\begin{proof}
\begin{enumerate}
\item [$\Longrightarrow$] Take $f \in \Phi$ such that $f(p \varphi(i)
  \psi(i))\leq \theta(i)$ for all $i \in I$. Using Corollary
  $\ref{coro:globaladj}$ we obtain that $H(f,p)\varphi(i) \leq
  (\psi(i) \rightarrow \theta(i))$. Hence, we deduce that $\varphi
  \vdash (\psi \rightarrow \theta)$. 

\item [$\Longleftarrow$] Assume that for $f \in \Phi$ we have that
  $f\varphi(i) \leq (\psi(i) \rightarrow \theta(i))$ for all $i \in
  I$. Then, in accordance with Observation \ref{obse:converseadj},
  \eqref{item:converseadj} we deduce that
  $G(f)((p\varphi(i))\psi(i))=G(f)(\varphi(i) \wedge \psi(i)) \leq
  \theta(i)$ for all $i \in I$. In other words we have proved that:
  $\varphi \wedge \psi \vdash \theta$.  
\end{enumerate}
\end{proof}
\item \label{item:enrichedoca} Next we add some structure in order to
  continue with the construction of the tripos. For an arbitrary
  subset $X \subset A$ of the $\mathcal {OCA}$,  
  there is an element $\operatorname{inf}(X) \in A$ that is the infimum
  of $X$ with respect to the order $\leq$.  
\begin{defi}\label{defi:sicomplete}
Let us consider that we have an
$\mathcal{OCA}$  
$(A,\leq,\circ, \operatorname{s,k})$, equipped with an implication
$\rightarrow$, an 
adjunctor $\operatorname{e}$ and a filter $\Phi$ as seen in
Definition \ref{defi:oca}. This    
$\mathcal {OCA}$ is said to be a $\koca$ if it is
\emph{$\operatorname{inf}$--complete}; i.e.: 
if the operator $\operatorname{inf}:\mathcal P(A) \rightarrow A$ is
everywhere defined. 
\end{defi} 
\begin{defi} We define the element $\perp \in A$ as $\perp=\operatorname{inf}A$.
\end{defi}

We list a few basic properties of the operations in the $\mathcal
{OCA}$ in relation with the element $\perp$.  
\begin{lema} \label{lema:forbeckche} Let
  us assume that $A$ is a \koca\ (c.f. Definition
  \ref{defi:sicomplete}), then:  
\begin{enumerate}
\item For all $a\in A$ we have that $\perp a = \perp$. 
\item If $b \leq (\operatorname{i} \rightarrow a)$, then
  $\operatorname{e}(\operatorname{si}b) \leq (\operatorname{i}
  \rightarrow a)$ for $a,b \in A$. In particular in the usual notation
  for the function $F$ --see Theorem \ref{theo:compoca}-- we have that
  $(F(\operatorname{e})(\operatorname{si}))(\operatorname {i}
  \rightarrow a) \leq (\operatorname {i} \rightarrow a)$.  
%% \item If $b \leq (\operatorname{i} \rightarrow a)$, then
  %% $\operatorname{e}(\operatorname{si}b) \leq (\perp \rightarrow c)$
  %% for $a,b,c \in A$. In particular in the usual notation for the
  %% function $F$ --see Theorem \ref{theo:compoca}-- we have that
  %% $(F(\operatorname{e})(\operatorname{si}))(\operatorname {i}
  %% \rightarrow a) \leq (\perp \rightarrow c)$. 
\item If $a \in A$, then $\operatorname{si}a\perp = \perp$. Moreover,
  for all $a,b \in A$ we have that $(F(\operatorname e) (\operatorname
  {si})) a \leq (\perp \rightarrow b)$.  
\end{enumerate} 
\end{lema}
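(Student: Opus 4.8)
The plan is to derive all three assertions from a small toolkit already in place: the reduction laws $\operatorname{i}x\leq x$, $\operatorname{k}xy\leq x$ and $\operatorname{s}xyz\leq xz(yz)$ for the combinators, the inequality $(F(c)x)y\leq c(xy)$ of \eqref{eqn:primera}, monotony of the application, the half adjunction property (Definition \ref{defi:oca}, \eqref{item:halfadj}: $b\leq(c\rightarrow d)\Rightarrow bc\leq d$), the adjunctor property ($xy\leq z\Rightarrow \operatorname{e}x\leq(y\rightarrow z)$), and — crucially — the fact that $\perp=\operatorname{inf}A$, so $\perp\leq x$ for every $x\in A$. Here $\operatorname{si}$ abbreviates the term $\operatorname{s}\operatorname{i}$, so that $\operatorname{si}xy\leq(\operatorname{i}y)(xy)\leq y(xy)$. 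I would present the parts in the order (1), (2), (3), since (2) is independent and (3) uses (1).

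For (1), fix $a\in A$ and let $x\in A$ be arbitrary. Since $\operatorname{k}x\in A$ and $\perp=\operatorname{inf}A$, we have $\perp\leq\operatorname{k}x$, whence by monotony of the application $\perp a\leq(\operatorname{k}x)a\leq x$, the last step being the $\operatorname{k}$-law. As $x$ was arbitrary, $\perp a\leq\operatorname{inf}A=\perp$; the reverse inequality $\perp\leq\perp a$ is immediate because $\perp$ is the global infimum, so by antisymmetry of $\leq$ we get $\perp a=\perp$.

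For (2), I would use the adjunctor: it suffices to prove $\operatorname{si}b\operatorname{i}\leq a$, since then $\operatorname{e}(\operatorname{si}b)\leq(\operatorname{i}\rightarrow a)$ follows at once. Now $\operatorname{si}b\operatorname{i}=\operatorname{s}\operatorname{i}b\operatorname{i}\leq(\operatorname{i}\operatorname{i})(b\operatorname{i})\leq\operatorname{i}(b\operatorname{i})\leq b\operatorname{i}$, using $\operatorname{s}xyz\leq xz(yz)$, then $\operatorname{i}\operatorname{i}\leq\operatorname{i}$ with monotony, then $\operatorname{i}y\leq y$. By hypothesis $b\leq(\operatorname{i}\rightarrow a)$, so the half adjunction property gives $b\operatorname{i}\leq a$; chaining, $\operatorname{si}b\operatorname{i}\leq a$ as required. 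For the ``in particular'' clause, apply \eqref{eqn:primera} with $c=\operatorname{e}$ and $x=\operatorname{si}$ to get $(F(\operatorname{e})(\operatorname{si}))(\operatorname{i}\rightarrow a)\leq\operatorname{e}(\operatorname{si}(\operatorname{i}\rightarrow a))$, and then invoke the statement just proved with $b=(\operatorname{i}\rightarrow a)$ — legitimate since $(\operatorname{i}\rightarrow a)\leq(\operatorname{i}\rightarrow a)$ — to bound the right-hand side by $(\operatorname{i}\rightarrow a)$.

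For (3), first compute $\operatorname{si}a\perp=\operatorname{s}\operatorname{i}a\perp\leq(\operatorname{i}\perp)(a\perp)\leq\perp(a\perp)$, using the $\operatorname{s}$-law, $\operatorname{i}\perp\leq\perp$ and monotony; by part (1), $\perp(a\perp)=\perp$, so $\operatorname{si}a\perp\leq\perp$ and hence $\operatorname{si}a\perp=\perp$ by antisymmetry. For the ``moreover'' clause, \eqref{eqn:primera} gives $(F(\operatorname{e})(\operatorname{si}))a\leq\operatorname{e}(\operatorname{si}a)$; since $\operatorname{si}a\perp=\perp\leq b$ (again because $\perp$ is the infimum), the adjunctor property yields $\operatorname{e}(\operatorname{si}a)\leq(\perp\rightarrow b)$, and combining the two inequalities finishes the proof. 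I do not expect a genuine obstacle here: the whole lemma is bookkeeping inside the reduction preorder, and the only point that needs a moment's care is making sure each step uses monotony of $\circ$ in the correct argument and invokes $\perp=\operatorname{inf}A$ exactly at the spot where the ``absorption'' into $\perp$ takes place.
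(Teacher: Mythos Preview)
Your proof is correct and, for parts (2) and (3), essentially identical to the paper's: the same chain $\operatorname{si}b\operatorname{i}\leq\operatorname{ii}(b\operatorname{i})\leq\operatorname{i}(b\operatorname{i})\leq b\operatorname{i}\leq a$ followed by the adjunctor, and the same use of \eqref{eqn:primera} to pass to $F(\operatorname{e})(\operatorname{si})$. The only difference is in part (1): the paper observes directly that $\perp\leq(a\rightarrow\perp)$ and applies the half adjunction to get $\perp a\leq\perp$ in one step, whereas you route through $\perp\leq\operatorname{k}x$ and the $\operatorname{k}$-law to show $\perp a\leq x$ for every $x$; both arguments are valid and of comparable length.
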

\begin{proof}
\begin{enumerate}
\item Clearly as $\perp \leq (a \rightarrow \perp)$ we deduce that $\perp a \leq \perp$ then $\perp a = \perp$. 
\item We have that $\operatorname{si}b\operatorname{i} \leq
  \operatorname{ii}(b \operatorname{i}) \leq \operatorname{i}(b
  \operatorname{i}) \leq b \operatorname{i} \leq a$, the last equality
  coming from the hypothesis that $b \leq (\operatorname{i}
  \rightarrow a)$. 
Hence, from the basic property of the adjuntor we obtain that:
$\operatorname{e}(\operatorname{si}b) \leq (\operatorname{i}
\rightarrow a)$. If we apply the above result to the case that
$b=(\operatorname{i} \rightarrow a)$, and then Theorem
\ref{theo:compoca}, we obtain the second part of the conclusion.   
%% \item We have that: $\operatorname{si}b\perp \leq
%% \operatorname{i}\perp(b\perp) \leq \perp (b\perp) \leq c$ where the
%% first inequality comes from the characterization of
%% $\operatorname{s}$ the second from the characterization of
%% $\operatorname{i}$ and the third was proved in (1). Hence, our
%% result is deduced from the basic property of the adjunctor. The
%% second part follows similarly than before taking
%% $b=(\operatorname{i} \rightarrow a)$. 
\item We have that: $\operatorname{si}a\perp \leq
  \operatorname{i}\perp(a\perp) \leq \perp (a\perp) = \perp$ where the
  first inequality comes from the characterization of
  $\operatorname{s}$ the second from the characterization of
  $\operatorname{i}$ and the third was proved in (1). Hence,
  $\operatorname{si}a\perp \leq b$ for all $b$. From the basic
  property of the adjunctor we deduce that $\operatorname e
  (\operatorname {si} a) \leq (\perp \rightarrow b)$ and the proof is
  finished proceeding in the same way than in part (2).  
\end{enumerate}
\end{proof}
\begin{obse}\label{obse:forbeckche} \begin{enumerate}
\item Notice that we have in particular proved the following assertion
  that follows directly from parts (2) and (3) of the above Lemma
  \ref{lema:forbeckche}: there is an element $g \in \Phi$ such that
  for all $a,b \in A$: $g(\operatorname{i} \rightarrow a) \leq
  (\operatorname{i} \rightarrow a)$ and $g (\operatorname{i}
  \rightarrow a) \leq (\perp \rightarrow b)$. 
\item In fact the inequality in part (3) guarantees that for all $a,b$: \[g a \leq (\perp \rightarrow b).\]  
\end{enumerate} 
\end{obse}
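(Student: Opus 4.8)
The plan is to make explicit the witness that is implicit in the phrase ``we have in particular proved'' and then read both inequalities off Lemma \ref{lema:forbeckche}. First I would fix once and for all the element $g := F(\operatorname{e})(\operatorname{si})$, where $F\colon A \to A$ is the function produced in Theorem \ref{theo:compoca}. The reason for pinning down a single $g$ at the outset, rather than extracting separate elements from the separate parts of the lemma, is that the statement asks for \emph{one} $g \in \Phi$ that simultaneously satisfies both $g(\operatorname{i} \rightarrow a) \leq (\operatorname{i} \rightarrow a)$ and $g(\operatorname{i} \rightarrow a) \leq (\perp \rightarrow b)$; and this is exactly the element that was already used inside the proof of Lemma \ref{lema:forbeckche}, parts (2) and (3).

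Next I would check $g \in \Phi$. Since $\operatorname{s},\operatorname{k} \in \Phi$ and $\Phi$ is closed under application, $\operatorname{i} = \operatorname{skk} \in \Phi$, hence $\operatorname{si} \in \Phi$. Moreover $\operatorname{e} \in \Phi$ and $F(\Phi) \subseteq \Phi$ by Theorem \ref{theo:compoca}, so $F(\operatorname{e}) \in \Phi$, and therefore $g = F(\operatorname{e})(\operatorname{si}) \in \Phi$. With this in hand, the first inequality of part (1) is literally the ``in particular'' clause of Lemma \ref{lema:forbeckche}(2), namely $(F(\operatorname{e})(\operatorname{si}))(\operatorname{i} \rightarrow a) \leq (\operatorname{i} \rightarrow a)$; and the second inequality of part (1) together with part (2) of the observation is the ``Moreover'' clause of Lemma \ref{lema:forbeckche}(3), which says $(F(\operatorname{e})(\operatorname{si}))c \leq (\perp \rightarrow b)$ for all $c,b \in A$. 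Taking $c = (\operatorname{i} \rightarrow a)$ gives the second inequality of part (1), while leaving $c = a$ arbitrary gives part (2) verbatim.

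There is no real obstacle here: the observation is pure bookkeeping, and everything it asserts was already established while proving Lemma \ref{lema:forbeckche}. The only point deserving a word of care is the uniformity of the witness, i.e.\ that the \emph{same} $g$ works in all three inequalities; this is immediate once one notices that Lemma \ref{lema:forbeckche}(2) and (3) were both proved with the identical element $F(\operatorname{e})(\operatorname{si})$, so no further computation or new choice of combinator is needed.
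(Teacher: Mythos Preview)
Your proposal is correct and matches exactly what the paper intends: the observation is stated without a separate proof precisely because the witness $g = F(\operatorname{e})(\operatorname{si})$ already appears verbatim in the ``in particular'' and ``Moreover'' clauses of Lemma~\ref{lema:forbeckche}(2) and (3), and you have simply made this explicit together with the routine check that $g \in \Phi$. There is nothing to add.
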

\begin{defi}Given $I$ a set we define the equality predicate in $A^{I \times I}$ as follows: $\operatorname{eq}_I:I \times I \rightarrow A$ 
\[\operatorname{eq}_I(i,j)=\begin{cases}\operatorname{i=skk}&\text{if}\quad i=j;\\ \perp &\text{if}\quad i \neq j.\end{cases}\] 
\end{defi}
It is clear that for all $a,b \in A$ and for all $i,j \in I$:
\begin{enumerate}
\item $\operatorname{eq}_I(i,i)a\leq a$,
\item $\operatorname{eq}_I(i,j)a \leq b$ if $i \neq j$.
\end{enumerate}
\item More can be said about $\mathcal{OCA}$s coming from $\mathcal {AKS}$s. \begin{obse}\label{obse:aksandsicomplete}
As we have seen in Sections \ref{section:six} and \ref{section:seven},
given an $\mathcal {AKS}$ we can produce an $\mathcal{OCA}$ that is
simply $A=\mathcal P_{\perp}(\Pi)$ with the order $\leq$ given by the
reverse inclusion and with a filter $\Phi$ defined as the set of
elements of $A$ that are realized by some element of the set of quasi
proofs $\operatorname{QP} \subseteq \Lambda$. The rest of the
ingredients $\circ,\rightarrow,\operatorname{s,k,e}$ are defined as before
--see in particular Theorem \ref{theo:pakstooca} and Lemma
\ref{lema:akstoocaf}.   

Notice that for this particular kind of $\mathcal{OCA}$s, \emph{both} 
$\operatorname{sup}$ and $\operatorname{inf}$ can be defined. Indeed
if $X \subset 
\mathcal P_{\perp}(\Pi)=A$, then
$\operatorname{inf}(X)=({}^\perp(\bigcup X))^{\perp}$ and
$\operatorname{sup}(X)=(^\perp(\bigcap X))^{\perp}$.    

In particular $\mathcal P_\perp(\Pi)$ is an
$\operatorname{inf}$--complete $\mathcal{OCA}$. 
\end{obse}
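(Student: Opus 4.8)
The statement really has two layers. The first layer --- that an $\mathcal{AKS}$ yields the $\mathcal{OCA}$ $A=\mathcal P_\perp(\Pi)$ with reverse--inclusion order, application $\circ$, implication $\rightarrow$, combinators $\operatorname{k},\operatorname{s}$, adjunctor $\operatorname{e}$ and filter $\Phi$ --- is nothing new: it is exactly the combination of Theorem \ref{theo:pakstooca} (which handles the underlying $\mathcal{PAKS}$) and Lemma \ref{lema:akstoocaf} (which shows $\Phi$ is a filter containing $\operatorname{e}$ and $\operatorname{c}$), so for that part the plan is simply to invoke those two results. The genuinely new content is the pair of formulas $\operatorname{inf}(X)=({}^\perp(\bigcup X))^\perp$, $\operatorname{sup}(X)=({}^\perp(\bigcap X))^\perp$ and the consequent $\operatorname{inf}$--completeness, and the plan is to reduce these to the standard fact that a closure operator on a power set produces a complete lattice of closed sets, being careful about the order reversal built into the convention $a\leq b \Leftrightarrow a\supseteq b$.

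First I would record that $P\mapsto ({}^\perp P)^\perp$ is a closure operator on $\mathcal P(\Pi)$: it is extensive by Observation \ref{obse:initialrl}\,(3) ($({}^\perp P)^\perp\supseteq P$), monotone because ${}^\perp(-)$ and $(-)^\perp$ are both antimonotone (Observation \ref{obse:initialrl}\,(1)), and idempotent by Observation \ref{obse:initialrl}\,(5) (${}^\perp(({}^\perp P)^\perp)={}^\perp P$). Its fixed points are by definition the elements of $A=\mathcal P_\perp(\Pi)$. I would then note the routine closure--operator fact that an arbitrary intersection of fixed points is again a fixed point: if $x_i\in A$ for $i\in J$ and $P=\bigcap_i x_i$, then $P\subseteq x_i$ gives $({}^\perp P)^\perp\subseteq ({}^\perp x_i)^\perp=x_i$ for each $i$, hence $({}^\perp P)^\perp\subseteq P$, while the reverse inclusion is extensivity; thus $\bigcap X\in A$ for every $X\subseteq A$.

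Next I would translate ``greatest lower bound'' and ``least upper bound'' for the order $\leq\,=\,\supseteq$. Given $X\subseteq A$, an element $\ell\in A$ is a $\leq$--lower bound of $X$ iff $\ell\supseteq x$ for all $x\in X$, i.e. iff $\ell\supseteq\bigcup X$; the $\leq$--greatest such $\ell$ is the $\subseteq$--smallest closed set containing $\bigcup X$, which by extensivity, monotonicity and idempotency of the closure is precisely $({}^\perp(\bigcup X))^\perp$. This proves $\operatorname{inf}(X)=({}^\perp(\bigcup X))^\perp\in A$ for every $X\subseteq A$, so $A$ is $\operatorname{inf}$--complete, and together with Theorem \ref{theo:pakstooca} and Lemma \ref{lema:akstoocaf} this is exactly the assertion that $\mathcal P_\perp(\Pi)$ is a \koca. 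Dually, $u\in A$ is an upper bound of $X$ iff $u\subseteq\bigcap X$, and since $\bigcap X$ is itself closed by the intersection remark, the $\leq$--least upper bound is $\bigcap X=({}^\perp(\bigcap X))^\perp$, which gives the formula for $\operatorname{sup}(X)$.

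I do not expect a serious obstacle here; this is Galois--connection bookkeeping. The only points that need care are the systematic reversal of inclusions forced by $a\leq b\Leftrightarrow a\supseteq b$, the verification that intersections of $\perp$--closed sets remain $\perp$--closed, and the boundary cases $X=\emptyset$, where the formulas specialize to $\operatorname{inf}(\emptyset)=({}^\perp\emptyset)^\perp=\Lambda^\perp$ and $\operatorname{sup}(\emptyset)=({}^\perp\Pi)^\perp=\Pi$ --- the $\leq$--maximum and $\leq$--minimum of $A$ respectively --- consistently with the Lemma following Observation \ref{obse:initialrl} and with Observation \ref{obse:initialrl}\,\eqref{item:perpnonempty}.
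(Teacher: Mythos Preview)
Your proposal is correct. The paper itself gives no proof at all for this Observation --- it simply states the formulas and moves on --- so there is nothing to compare your argument against; your closure-operator verification, including the careful handling of the order reversal and the check that intersections of $\perp$-closed sets are $\perp$-closed, is exactly the natural justification one would supply and is more than the paper provides.
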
 
\item Let $A$ be an $\mathcal {OCA}$ and we will work in the category
  denoted as
  $[\operatorname{Set}^{\text{op}},\operatorname{Preorder}]=
  \operatorname{Preorder}^{\operatorname{Set}^{\text{op}}}$,    
  that has as objects the functors $F: \operatorname{Set}^{\text{op}}
  \rightarrow \operatorname{Preord}$, and as arrows the natural
  transformations between functors. The category
  $\operatorname{Preord}$ is the category whose objects are the
  partially order sets and its arrows are the monotone functions
  between the partially ordered sets.  

\begin{defi}\label{defi:regularfunctor} Given the $\mathcal {OCA}$
  called $A$ we define the ``regular functor'' $\mathcal R_A \in
  \operatorname{Preord}^{\operatorname{Set}^{\text{op}}}$ as
  follows: \[\mathcal R_A:\operatorname{Set}^{\text{op}} \rightarrow
  \operatorname{Preord},\]  
with $\mathcal R_A(I)=(A^I,\vdash)$ where $\vdash$ is as in the
definition appearing in Observation $\ref{obse:orders}$ item
$\ref{item:entile}$. 

If $\alpha:J \rightarrow I$, then $\alpha^*=\mathcal
R_A(\alpha):(A^I,\vdash) \rightarrow (A^J,\vdash)$ is defined as:
$\alpha^*(\varphi)=\varphi \circ \alpha$.  
\end{defi}
\begin{obse} \begin{enumerate}
\item To prove that the above Definition \ref{defi:regularfunctor}
  makes sense, we have to check that $\alpha^*$ is monotone in
  relation with the order of entilement: if $\varphi,\varphi'\in A^I
  $, and $\varphi \vdash \varphi'$, then $\alpha^*(\varphi) \vdash
  \alpha^*(\varphi')$.  We have to prove that if there is an $f \in
  \Phi$ with the property that $f \varphi(i) \leq \varphi'(i)$ for all
  $i \in I$, then there is a $g \in \Phi$ such that for all $j \in J$:
  $ g\varphi(\alpha(j))=\varphi'(\alpha(j))$. This is clearly true by
  taking $f=g$. 
\item It is clear that $\mathcal R_A(\alpha \beta)=\mathcal R_A(\beta) \mathcal R_A(\alpha)$. 
\end{enumerate}
\end{obse}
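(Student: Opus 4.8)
The plan is to verify the two assertions of the observation, both of which come down to unwinding the definition of the entailment order from Observation \ref{obse:orders}, item \ref{item:ntile}.

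First I would establish that $\alpha^*$ is well defined, i.e.\ monotone for $\vdash$. Given $\alpha : J \to I$ and $\varphi, \varphi' \in A^I$ with $\varphi \vdash \varphi'$, unwinding the definition produces a \emph{single} element $f \in \Phi$ with $f\varphi(i) \leq \varphi'(i)$ for every $i \in I$. Since $\alpha^*(\varphi)(j) = \varphi(\alpha(j))$ and $\alpha^*(\varphi')(j) = \varphi'(\alpha(j))$, and the inequality $f\varphi(i) \leq \varphi'(i)$ is available for \emph{all} $i \in I$ --- in particular for $i = \alpha(j)$ --- the same $f$ witnesses $\alpha^*(\varphi) \vdash \alpha^*(\varphi')$. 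I would emphasize the one conceptual point here: the entailment order requires a realizer uniform over the index set, so pulling back along $\alpha$ (surjective or not) costs nothing; this is precisely why $\mathcal R_A$ is built with $\vdash$ rather than with the pointwise order $\sqsubseteq$. One also invokes that $(A^J,\vdash)$ is a preorder, already recorded in Observation \ref{obse:orders}.

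For functoriality, given $\beta : K \to J$ and $\alpha : J \to I$, I would note that $\mathcal R_A(\alpha\beta)$ and $\mathcal R_A(\beta)\,\mathcal R_A(\alpha)$ are both monotone maps $(A^I,\vdash) \to (A^K,\vdash)$, and on $\varphi \in A^I$ they return $\varphi \circ (\alpha \circ \beta)$ and $(\varphi \circ \alpha)\circ \beta$ respectively, which coincide by associativity of function composition; similarly $\mathcal R_A(\operatorname{id}_I)$ sends $\varphi$ to $\varphi\circ\operatorname{id}_I = \varphi$. Hence $\mathcal R_A$ is a genuine functor $\operatorname{Set}^{\text{op}} \to \operatorname{Preord}$.

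I do not expect a real obstacle: each part is immediate after expanding definitions. The only thing requiring a modicum of care is bookkeeping of orders --- every appeal to reflexivity or transitivity of $\vdash$ should be cited from Observation \ref{obse:orders} rather than reproved, and the monotonicity argument must stay with $\vdash$ and not drift into $\sqsubseteq$, for which the analogous statement is also true but by a different and, in this construction, irrelevant reason.
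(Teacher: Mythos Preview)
Your proposal is correct and follows exactly the paper's approach: for monotonicity you take $g=f$ (the paper's entire argument is ``This is clearly true by taking $f=g$''), and for functoriality you invoke associativity of composition, which the paper leaves as ``It is clear that\ldots''. Your additional remarks on uniformity of the realizer and on the identity map are sound elaborations but not required.
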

Next we define another functor, with the same object part than
$\mathcal R_A$. That will be the ``right adjoint'' of $\mathcal R_A$. 
\begin{defi} Define the functor $\forall_A:\operatorname{Set}
  \rightarrow \operatorname{Preord}$. At the level of objects
  $\forall_A(I)=(A^I,\vdash)$, and for an arrow $\alpha:J \rightarrow
  I$ and $\varphi: J \rightarrow A$, we define $\forall_A
  \alpha(\varphi): I \rightarrow A$ as $\forall_A
  \alpha(\varphi)(i)=\operatorname{inf}_{j \in
    J}\{\operatorname{eq}_I(\alpha(j),i) \rightarrow \varphi(j)\}$
  with $i \in I$.  
\end{defi}
\begin{obse}\label{obse:makesense} 
%\begin{enumerate}
%\item 
  We observe first that the definition above makes sense: we want
  to show that if $\varphi \vdash \varphi'$ for $\varphi,\varphi'\in
  A^J$, then $\forall_A \alpha(\varphi) \vdash \forall_A
  \alpha(\varphi')$.  In other words, if there is an $f \in \Phi$ such
  that for all $j \in J$, $f\varphi(j) \leq \varphi'(j)$, then there
  exists a $g \in \Phi$ such that $g (\operatorname{inf}_{j \in
    J}\{\operatorname{eq}_I(\alpha(j),i) \rightarrow \varphi(j))\}
  \leq \operatorname{inf}_{j \in J}\{\operatorname{eq}_I(\alpha(j),i)
  \rightarrow \varphi'(j)\}$ for all $i\in I$. 
  Using the fact that $\rightarrow$ is monotone in the second variable
  we have that: $\operatorname{inf}_{j \in
    J}\{\operatorname{eq}_I(\alpha(j),i) \rightarrow \varphi'(j)\} \geq
  \operatorname{inf}_{j \in J}\{\operatorname{eq}_I(\alpha(j),i)
  \rightarrow f\varphi(j)\}$. Using Corollary \ref{coro:globaladj},
  \eqref{item:particular} we deduce that for some $g \in \Phi$ --in fact in
  accordance with the mentioned corollary, $g=F(e)F(f) \in \Phi$-- we
  have that $\operatorname{inf}_{j \in
    J}\{\operatorname{eq}_I(\alpha(j),i) \rightarrow f\varphi(j)\} \geq
  g\operatorname{inf}_{j \in J}\{\operatorname{eq}_I(\alpha(j),i)
  \rightarrow \varphi(j)\}$.  
  Putting both inequalities together we deduce that
  \[g\operatorname{inf}_{j \in J}\{\operatorname{eq}_I(\alpha(j),i)
  \rightarrow \varphi(j)\} \leq \operatorname{inf}_{j \in J}\{ 
  \operatorname{eq}_I(\alpha(j),i) \rightarrow \varphi'(j)\},\] that is our conclusion.
%\item It is a direct computation the verification that
%  $\forall_A(\alpha)\forall_A(\beta)=\forall_A(\alpha\beta)$.  
%\end{enumerate}
\end{obse}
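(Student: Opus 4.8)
The plan is to reduce the entailment $\forall_A\alpha(\varphi)\vdash\forall_A\alpha(\varphi')$ to the underlying order $\leq$ of $A$ and then to exhibit a \emph{single} realizer $g\in\Phi$, independent of $i\in I$, that witnesses it pointwise. Unwinding the hypothesis, $\varphi\vdash\varphi'$ means there is $f\in\Phi$ with $f\varphi(j)\leq\varphi'(j)$ for all $j\in J$; I will abbreviate $W_i=\forall_A\alpha(\varphi)(i)=\operatorname{inf}_{j\in J}\{\operatorname{eq}_I(\alpha(j),i)\rightarrow\varphi(j)\}$ and likewise $W_i'$ for $\varphi'$, so that the goal becomes: find $g\in\Phi$ with $gW_i\leq W_i'$ for every $i\in I$.

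First I would fix $i\in I$ and an arbitrary $j_0\in J$. Since $W_i$ is an infimum, $W_i\leq\operatorname{eq}_I(\alpha(j_0),i)\rightarrow\varphi(j_0)$; applying the application by $F(\operatorname{e})F(f)$ (monotone in its argument) and then Corollary \ref{coro:globaladj}\,\eqref{item:preparticular} with $c=f$, $a=\operatorname{eq}_I(\alpha(j_0),i)$ and $b=\varphi(j_0)$ gives
\[
\bigl(F(\operatorname{e})F(f)\bigr)\,W_i \;\leq\; \bigl(F(\operatorname{e})F(f)\bigr)\bigl(\operatorname{eq}_I(\alpha(j_0),i)\rightarrow\varphi(j_0)\bigr) \;\leq\; \operatorname{eq}_I(\alpha(j_0),i)\rightarrow f\varphi(j_0).
\]
Because $\rightarrow$ is monotone in its second argument (Definition \ref{defi:oca}\,\eqref{item:monotony}) and $f\varphi(j_0)\leq\varphi'(j_0)$, the right-hand side is $\leq\operatorname{eq}_I(\alpha(j_0),i)\rightarrow\varphi'(j_0)$. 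As $j_0$ is arbitrary, $\bigl(F(\operatorname{e})F(f)\bigr)W_i$ is a lower bound for the family whose infimum is $W_i'$, hence $\bigl(F(\operatorname{e})F(f)\bigr)W_i\leq W_i'$.

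It then remains to set $g=F(\operatorname{e})F(f)$ and to check $g\in\Phi$: indeed $f\in\Phi$ and $\operatorname{e}\in\Phi$, the map $F$ preserves $\Phi$ by Theorem \ref{theo:compoca}, and $\Phi$ is closed under application. Since $g$ does not depend on $i$, the family of inequalities $gW_i\leq W_i'$ ($i\in I$) is exactly $\forall_A\alpha(\varphi)\vdash\forall_A\alpha(\varphi')$. The one genuine subtlety --- and the place where care is needed --- is this uniformity in $i$: handling each $i$ by the $A$-level adjunction and monotonicity separately and then gluing would only give the pointwise order $\sqsubseteq$ on $A^I$, not the entailment $\vdash$. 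What makes the argument go through is precisely that Corollary \ref{coro:globaladj}\,\eqref{item:preparticular} supplies the realizer $F(\operatorname{e})F(f)$ explicitly and independently of its arguments, so one and the same $g$ serves for all $i$ at once; its compatibility with the infimum is then only monotonicity of the application together with the defining property of $\operatorname{inf}$.
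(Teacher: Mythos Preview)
Your proof is correct and follows essentially the same route as the paper: both arguments produce the uniform realizer $g=F(\operatorname{e})F(f)\in\Phi$ by combining Corollary~\ref{coro:globaladj}\,\eqref{item:preparticular} with the monotonicity of $\rightarrow$ in its second argument, and both pass through the infimum via monotonicity of application. Your citation of \eqref{item:preparticular} is in fact the right one; the paper's own text points to \eqref{item:particular} but the formula $g=F(\operatorname{e})F(f)$ it invokes is exactly the content of \eqref{item:preparticular}.
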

Next we prove that for an arbitrary $\alpha:J \rightarrow I$ the map
$\forall_A(\alpha):A^J \rightarrow A^I$ is a ``right adjoint'' of
$\alpha^*=\mathcal R_A(\alpha):A^I \rightarrow A^J$ with respect to
the orden $\vdash$.  
\begin{theo}\label{theo:adjparatodo} Assume that $A$ is a \koca. 
If $I,J \in \operatorname{Set}$, $\alpha:J \rightarrow I$ is a
function and $\varphi \in A^J\,,\, \psi \in A^I$, then: 
\[\alpha^*(\psi) \vdash \varphi \Leftrightarrow \psi \vdash
\forall_A\alpha (\varphi).\] 
\end{theo}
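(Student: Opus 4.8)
The plan is to establish the two implications separately, each time by exhibiting a \emph{single} element of the filter $\Phi$ that witnesses the entailment uniformly in the index set. This uniformity is precisely what does not come for free from Theorem \ref{theo:propsmain}: when we pass from $A$ to $A^I$ (or $A^J$) equipped with the order $\vdash$, we are not allowed a separate realizer for each coordinate. The tools that make the uniform choice possible are the globally defined combinators of Theorem \ref{theo:compoca} --in particular the function $F$ with $(F(c)a)b \leq c(ab)$ and $F(\Phi)\subseteq\Phi$, and the commutator $f\in\Phi$ with $(fa)b\leq ba$--, the adjunction-flavoured estimates of Corollary \ref{coro:globaladj} --namely $(F(\operatorname{e})f)a\leq\operatorname{i}\rightarrow a$ and $(f\operatorname{i})(\operatorname{i}\rightarrow a)\leq a$-- and the $\perp$-computations of Lemma \ref{lema:forbeckche} and Observation \ref{obse:forbeckche}. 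Recall also that $\operatorname{eq}_I(i,i)=\operatorname{i}$ and $\operatorname{eq}_I(\alpha(j),i)=\perp$ whenever $\alpha(j)\neq i$, and that $\forall_A\alpha(\varphi)(i)=\operatorname{inf}_{j\in J}\{\operatorname{eq}_I(\alpha(j),i)\rightarrow\varphi(j)\}$ is defined exactly because $A$ is a \koca.

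For the implication $\psi\vdash\forall_A\alpha(\varphi)\Rightarrow\alpha^*(\psi)\vdash\varphi$, take $g\in\Phi$ with $g\,\psi(i)\leq\forall_A\alpha(\varphi)(i)$ for all $i\in I$. Fix $j\in J$ and put $i=\alpha(j)$; evaluating the infimum at the single index $j$ and using $\operatorname{eq}_I(\alpha(j),\alpha(j))=\operatorname{i}$ gives $g\,\psi(\alpha(j))\leq\operatorname{i}\rightarrow\varphi(j)$. Applying the estimate $(f\operatorname{i})(\operatorname{i}\rightarrow a)\leq a$ of Corollary \ref{coro:globaladj} together with monotonicity of the application we obtain $(f\operatorname{i})\big(g\,\psi(\alpha(j))\big)\leq\varphi(j)$, and one application of $F$ rewrites the left-hand side as $\big(F(f\operatorname{i})g\big)\psi(\alpha(j))$. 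Since $f,\operatorname{i},g\in\Phi$ and $F(\Phi)\subseteq\Phi$, the element $f':=F(f\operatorname{i})g\in\Phi$ satisfies $f'\psi(\alpha(j))\leq\varphi(j)$ for every $j\in J$, that is $\alpha^*(\psi)\vdash\varphi$.

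For the converse --which is the main obstacle-- take $f_0\in\Phi$ with $f_0\,\psi(\alpha(j))\leq\varphi(j)$ for all $j\in J$; we must produce one $g\in\Phi$ with $g\,\psi(i)\leq\forall_A\alpha(\varphi)(i)$ for all $i\in I$, and since $\operatorname{inf}$ is the greatest lower bound it is enough to have $g\,\psi(i)\leq\operatorname{eq}_I(\alpha(j),i)\rightarrow\varphi(j)$ for every pair $i,j$. The difficulty is that $\operatorname{eq}_I(\alpha(j),i)$ equals $\operatorname{i}$ on the ``diagonal'' $\alpha(j)=i$ and $\perp$ off it, so a single element must dominate two families of different shape; the key is that both can be reached through a term of the form $\operatorname{i}\rightarrow a$. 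Using $(F(\operatorname{e})f)a\leq\operatorname{i}\rightarrow a$ from Corollary \ref{coro:globaladj} and rearranging with $F$, the element $g_1:=F\big(F(\operatorname{e})f\big)f_0\in\Phi$ satisfies $g_1\,\psi(i)\leq\operatorname{i}\rightarrow\big(f_0\,\psi(i)\big)$; now let $g_0\in\Phi$ be the element of Observation \ref{obse:forbeckche} with $g_0(\operatorname{i}\rightarrow a)\leq\operatorname{i}\rightarrow a$ and $g_0(\operatorname{i}\rightarrow a)\leq\perp\rightarrow b$ for all $a,b\in A$, and set $g:=F(g_0)g_1\in\Phi$. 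Then $g\,\psi(i)\leq g_0\big(g_1\,\psi(i)\big)\leq g_0\big(\operatorname{i}\rightarrow(f_0\,\psi(i))\big)$; for $j$ with $\alpha(j)=i$ this is $\leq\operatorname{i}\rightarrow(f_0\,\psi(i))\leq\operatorname{i}\rightarrow\varphi(j)$ since $f_0\,\psi(i)=f_0\,\psi(\alpha(j))\leq\varphi(j)$ and $\rightarrow$ is monotone in its second argument, while for $j$ with $\alpha(j)\neq i$ it is $\leq\perp\rightarrow\varphi(j)$ by the second property of $g_0$; in either case $g\,\psi(i)\leq\operatorname{eq}_I(\alpha(j),i)\rightarrow\varphi(j)$. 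Hence $g\,\psi(i)$ is a lower bound of the defining family, so $g\,\psi(i)\leq\forall_A\alpha(\varphi)(i)$ for all $i\in I$ with the same $g$, i.e. $\psi\vdash\forall_A\alpha(\varphi)$. The only genuinely delicate point is this simultaneous handling of the diagonal and off-diagonal indices, and it is here that both the adjunctor and the $\operatorname{inf}$-completeness of the \koca --through the existence of $\perp$ and the estimates of Lemma \ref{lema:forbeckche}-- are used in an essential way.
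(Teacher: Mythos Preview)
Your proof is correct and follows the same overall strategy as the paper's: in each direction you produce a single filter element that witnesses the entailment uniformly, obtained by composing the combinators of Theorem~\ref{theo:compoca} with the adjunctor and the $\perp$-estimates. The backward direction is essentially identical to the paper's (you use $F(f\operatorname{i})g$ where the paper uses $M(f,\operatorname{i})$, but both implement ``apply then feed $\operatorname{i}$'').

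For the forward direction the two arguments differ tactically. The paper uses the combinator $E$ of Theorem~\ref{theo:compoca} (with $(E(a)b)c\leq c(ab)$) so that $E(f)\psi(i)\,\operatorname{eq}_I(\alpha(j),i)\leq \operatorname{eq}_I(\alpha(j),i)\big(f\psi(i)\big)$; the case split is then immediate since $\operatorname{i}x\leq x$ and $\perp x=\perp$, and a single application of the adjunctor followed by one $F$ finishes. Your route instead first wraps $f_0\psi(i)$ as $\operatorname{i}\rightarrow(f_0\psi(i))$ and then invokes the pre-packaged element $g_0$ of Observation~\ref{obse:forbeckche} to handle both cases simultaneously. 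Both are valid; the paper's version is a step shorter because $E$ places $\operatorname{eq}_I$ in applicative position directly, while your version has the virtue of making explicit that the diagonal/off-diagonal dichotomy is exactly the content of Observation~\ref{obse:forbeckche}.
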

\begin{proof}
\begin{enumerate}
\item[$\Longrightarrow$] From the hypothesis, we deduce that there is
  an element $f \in \Phi$, with the property that for all $j \in J$ $f
  \psi(\alpha(j)) \leq \varphi(j)$.  
We take a general $i \in I$, and prove first that for all $i,j$ we have that: 
$E(f) \psi(i) \operatorname{eq}_I(\alpha(j),i) \leq \operatorname{i}(f \psi(\alpha(j))) \leq f \psi(\alpha(j)) \leq \varphi(j)$. 
\begin{itemize}
\item If $i \neq \alpha(j)$ we deduce from 
Theorem \ref{theo:compoca},\eqref{eqn:cero} and Lemma \ref{lema:forbeckche} that in this situation
$E(f) \psi(i) \operatorname{eq}_I(\alpha(j),i) \leq \perp(f
\psi(i))=\perp \leq \varphi(j)$.  
\item If $i = \alpha(j)$, we deduce similarly that $E(f) \psi(i)
  \operatorname{eq}_I(\alpha(j),i) \leq \operatorname{i}(f
  \psi(\alpha(j))) \leq f \psi(\alpha(j)) \leq \varphi(j)$.  
\end{itemize}
Hence, using the basic property of the adjunctor, we see that:
$\operatorname{e}(E(f) \psi(i)) \leq (\operatorname{eq}_I(\alpha(j),i)
\rightarrow \varphi(j))$.  Using as before Theorem
\ref{theo:compoca},\eqref{eqn:primera}, we obtain that
$(F(\operatorname{e})E(f)) \psi(i) \leq
(\operatorname{eq}_I(\alpha(j),i) \rightarrow \varphi(j))$, and taking
$\operatorname{inf}_j$ we deduce that if we call
$g=F(\operatorname{e})E(f) \in \Phi$,  
we have that:
\[ g \psi(i) \leq \forall_A\alpha\varphi(i) \text{ for all $i\in
  I$, \quad i.e.}\quad \psi
\vdash \forall_A\alpha(\varphi).\]  
  
\item[$\Longleftarrow$] Our hypothesis guarantees the existence of an
  element $f \in \Phi$ such that for all $i,j$ we have: $f.\psi(i)
  \leq (\operatorname{eq}_I(\alpha(j),i) \rightarrow \varphi(j))$. In
  particular if $i=\alpha(j)$ we have that for all $j \in J$,
  $f.\psi(\alpha(j)) \leq (\operatorname{i} \rightarrow \varphi(j))$
  and then, by the basic (half) adjunction condition we see that
  $(f \psi(\alpha(j)))\operatorname{i} \leq \varphi(j)$. Using Theorem
  \ref{theo:compoca},\eqref{eqn:cuarta}, we obtain that:
  $M(f,\operatorname{i})\psi(\alpha(j)) \leq \varphi(j)$ with
  $M(f,\operatorname{i}) \in \Phi$ or in other words, we obtain that
  for all $j \in J$, $M(f,\operatorname{i})\alpha^*(\psi)(j) \leq
  \varphi(j)$ that is what we wanted to conclude.   
\end{enumerate}
\end{proof}
\item We want to prove the so called theorem of Beck--Chevalley. 
\begin{theo} Assume that $A$ is a \koca\ and that the following is a
  pull back diagram in the category of sets:  
\[
\xymatrix{P\ar[r]^{\rho}\ar[d]_{\pi}&J\ar[d]^{\alpha}\\
K\ar[r]_{\beta}&I}
\]
and consider the corresponding diagram that follows:
\[
\xymatrix{A^P\ar[d]_{\forall \pi}&A^J\ar[d]^{\forall \alpha}\ar[l]_-{\rho^*}\\
A^K&A^I\ar[l]^-{\beta^*}}
\]

Then, the second diagram commutes in the sense that for all $\varphi \in A^J$: 
\[\beta^*(\forall \alpha \varphi) \vdash \forall
\pi(\rho^*(\varphi))\quad \text{and} \quad  \forall \pi(
\rho^*(\varphi)) \vdash \beta^*(\forall \alpha \varphi).\] 
\end{theo}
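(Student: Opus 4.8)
The plan is to unwind both sides of the claimed equivalence into explicit infima and then exhibit one and the same element of $\Phi$ that realizes both entailments. With $P=\{(k,j)\in K\times J : \beta(k)=\alpha(j)\}$, $\pi(k,j)=k$, $\rho(k,j)=j$, set $\psi_1:=\beta^*(\forall\alpha\,\varphi)$ and $\psi_2:=\forall\pi(\rho^*\varphi)$ in $A^K$; for $k\in K$,
\[
\psi_1(k)=\operatorname{inf}_{j\in J}\{\operatorname{eq}_I(\alpha(j),\beta(k))\rightarrow\varphi(j)\},\qquad
\psi_2(k)=\operatorname{inf}_{(k',j)\in P}\{\operatorname{eq}_K(k',k)\rightarrow\varphi(j)\},
\]
and both infima exist because $A$ is a \koca. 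The realizer will be $g=F(\operatorname{e})(\operatorname{si})\in\Phi$ (Theorem \ref{theo:compoca}, Lemma \ref{lema:forbeckche}), which by Observation \ref{obse:forbeckche} satisfies, for all $a,b\in A$, both $g(\operatorname{i}\rightarrow a)\leq(\operatorname{i}\rightarrow a)$ and $g\,a\leq(\perp\rightarrow b)$; together with monotonicity of the application this is exactly the device needed to absorb the two kinds of terms that occur in the infima above, namely those whose $\operatorname{eq}$-factor is $\operatorname{i}$ and those whose $\operatorname{eq}$-factor is $\perp$.

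For $\psi_1\vdash\psi_2$ I would fix $k$ and check that $g\,\psi_1(k)$ lies below every term of $\psi_2(k)$, i.e.\ below $\operatorname{eq}_K(k',k)\rightarrow\varphi(j)$ for each $(k',j)\in P$. If $k'\neq k$ this term is $\perp\rightarrow\varphi(j)$ and $g\,a\leq\perp\rightarrow\varphi(j)$ settles it; if $k'=k$ then $\beta(k)=\alpha(j)$ by definition of $P$, the term is $\operatorname{i}\rightarrow\varphi(j)$, and this very element is one of the terms defining $\psi_1(k)$, so $\psi_1(k)\leq\operatorname{i}\rightarrow\varphi(j)$ and hence $g\,\psi_1(k)\leq g(\operatorname{i}\rightarrow\varphi(j))\leq\operatorname{i}\rightarrow\varphi(j)$ by monotonicity and the first property of $g$. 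Passing to the infimum over $(k',j)$ gives $g\,\psi_1(k)\leq\psi_2(k)$ for every $k$, hence $\psi_1\vdash\psi_2$ since $g\in\Phi$ does not depend on $k$. (This direction is also formal: it is the transpose, under the adjunction of Theorem \ref{theo:adjparatodo} for $\pi$, of $\rho^*$ applied to the counit $\alpha^*(\forall\alpha\,\varphi)\vdash\varphi$, using $\pi^*\beta^*=\rho^*\alpha^*$; so it needs only commutativity of the square.)

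For the reverse entailment $\psi_2\vdash\psi_1$ --- the one genuinely using the pullback --- I would, again for fixed $k$, check that $g\,\psi_2(k)$ lies below every term $\operatorname{eq}_I(\alpha(j),\beta(k))\rightarrow\varphi(j)$ of $\psi_1(k)$. If $\alpha(j)\neq\beta(k)$ this term is $\perp\rightarrow\varphi(j)$, handled by $g\,a\leq\perp\rightarrow\varphi(j)$. If $\alpha(j)=\beta(k)$, the universal property of the pullback yields $p\in P$ with $\pi(p)=k$ and $\rho(p)=j$, so $\operatorname{eq}_K(k,k)\rightarrow\varphi(j)=\operatorname{i}\rightarrow\varphi(j)$ occurs among the terms of $\psi_2(k)$, whence $\psi_2(k)\leq\operatorname{i}\rightarrow\varphi(j)$ and therefore $g\,\psi_2(k)\leq g(\operatorname{i}\rightarrow\varphi(j))\leq\operatorname{i}\rightarrow\varphi(j)=\operatorname{eq}_I(\alpha(j),\beta(k))\rightarrow\varphi(j)$. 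Taking the infimum over $j$ and recalling that $g\in\Phi$ is independent of $k$ gives $\psi_2\vdash\psi_1$.

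The main obstacle is the uniformity of the realizer: one must produce a single $g\in\Phi$ that works for every index $k$ and for every term of the two (possibly infinite) infima simultaneously, and the naive term-by-term choice of realizers does not assemble into an element of $\Phi$. This is exactly why Lemma \ref{lema:forbeckche} and Observation \ref{obse:forbeckche} were isolated beforehand: $F(\operatorname{e})(\operatorname{si})$ applied to any $a$ reduces below $\perp\rightarrow b$ for every $b$, while applied to an element of the form $\operatorname{i}\rightarrow a$ it reduces back below $\operatorname{i}\rightarrow a$, and the dichotomy $\alpha(j)=\beta(k)$ versus $\alpha(j)\neq\beta(k)$ collapses every term occurring above into one of these two shapes. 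The $\operatorname{inf}$-completeness of a \koca\ is what lets us form the quantifiers at all, and the universal property of the pullback --- not just commutativity of the square --- is used only in the second entailment, through the element $p\in P$.
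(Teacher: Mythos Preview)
Your proposal is correct and follows essentially the same approach as the paper. Both arguments hinge on the single realizer $g=F(\operatorname{e})(\operatorname{si})\in\Phi$ isolated in Lemma~\ref{lema:forbeckche} and Observation~\ref{obse:forbeckche}, exploiting the dichotomy that every term in the relevant infima is either of the form $\operatorname{i}\rightarrow a$ (preserved by $g$) or $\perp\rightarrow b$ (dominated by $g$ applied to anything).

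The organizational differences are minor but worth noting. For the forward direction $\beta^*(\forall\alpha\,\varphi)\vdash\forall\pi(\rho^*\varphi)$, the paper gives \emph{only} the abstract categorical argument via unit and counit of the adjunction in Theorem~\ref{theo:adjparatodo}; you supply both that and an explicit computation, which is a bonus. For the reverse direction, the paper first splits on whether $\pi^{-1}(k_0)$ is empty and, when it is not, fixes a witness $z_0$ and then case-splits on $\rho(z_0)=j$ versus $\rho(z_0)\neq j$; your organization---fixing $j$ and splitting directly on $\alpha(j)=\beta(k)$ versus $\alpha(j)\neq\beta(k)$, invoking the pullback to produce the needed element of $P$ in the first case---is cleaner and makes the role of the universal property more transparent. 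Your explicit identification of $P$ with $\{(k,j):\beta(k)=\alpha(j)\}$ is harmless and clarifies the bookkeeping.
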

\begin{proof}
\begin{enumerate}
\item The proof that $\beta^*(\forall \alpha (\varphi)) \vdash \forall
  \pi( \rho^*(\varphi))$ follows from general categorical properties.
  We start with the counit relation in Theorem \ref{theo:adjparatodo}
  that guarantees that $\alpha^*\forall\alpha(\varphi) \vdash \varphi$
  and applying $\rho^*$ deduce that
  $\rho^*\alpha^*\forall\alpha(\varphi) \vdash \rho^*(\varphi)$. From
  the functoriality of $\mathcal R$ we obtain that
  $\pi^*\beta^*\forall\alpha(\varphi) \vdash \rho^*(\varphi)$, and by
  Observation \ref{obse:makesense} we get: $\forall
  \pi\pi^*\beta^*\forall\alpha(\varphi) \vdash \forall
  \pi\rho^*(\varphi)$. 
  Finally, using the unit of the adjunction in Theorem
  \ref{theo:adjparatodo} we conclude that $\beta^*\forall\alpha(\varphi)
  \vdash \forall \pi\pi^*\beta^*\forall\alpha(\varphi) \vdash \forall
  \pi\rho^*(\varphi)$. 
\item Now we prove that $(\forall \pi( \rho^*(\varphi)) \vdash \beta^*(\forall \alpha \varphi))$.
We fix $k_0 \in K$ and need to find an element $g \in \Phi$ such that for all $j \in J$ we have that:
\[g \operatorname{inf}_{z \in P}\{\operatorname{eq}_K(\pi(z),k_0)
\rightarrow \varphi (\rho(z))\} \leq
\big(\operatorname{eq}_I(\alpha(j),\beta(k_0)) \rightarrow
\varphi(j)\big).\]  

We distinguish two possiblities considering if there is an element  $z_0 \in P$ such that $\pi(z_0)=k_0$ or not.
\begin{itemize}
\item Suppose that we take $z_0 \in P$ with the property that $\pi(z_0)=k_0$, i.e. $z_0 \in \pi^{-1}(k_0)$.
In this situation $\operatorname{eq}_K(\pi(z_0),k_0) \rightarrow
\varphi (\rho(z_0)) = \operatorname{i} \rightarrow \varphi
(\rho(z_0))$ and it follows from Observation \ref{obse:forbeckche} and
using the notation there, that
$g\big(\operatorname{eq}_K(\pi(z_0),k_0) \rightarrow \varphi
(\rho(z_0))\big)  \leq \big(\operatorname{eq}_K(\pi(z_0),k_0)
\rightarrow \varphi (\rho(z_0))\big)$

Now, given an arbitrary $j \in K$ it may happen that $\rho(z_0)=j$ or
$\rho(z_0) \neq j$. In the first case we have that
$\alpha(j)=\beta(k_0)$ and that means that
$\operatorname{eq}_I(\alpha(j),\beta(k_0)) \rightarrow
\varphi(j)=\operatorname{i} \rightarrow \varphi(\rho(z_0))$.  
Hence in this case we have that $g (\operatorname{eq}_K(\pi(z_0),k_0)
\rightarrow \varphi (\rho(z_0))) \leq
\big(\operatorname{eq}_I(\alpha(j),\beta(k_0)) \rightarrow
\varphi(j)\big)$. 
Otherwise, if $\rho(z_0) \neq j$, we cannot have that
$\alpha(j)=\beta(k_0)$ as can be deduced by the basic properties of
the pull back.  
Hence, we have that $\operatorname{eq}_I(\alpha(j),\beta(k_0))
\rightarrow \varphi(j)= \perp \rightarrow \varphi(j)$ and we obtain
again that $g (\operatorname{eq}_K(\pi(z_0),k_0) \rightarrow \varphi
(\rho(z_0))) \leq \big(\operatorname{eq}_I(\alpha(j),\beta(k_0))
\rightarrow \varphi(j)\big)$ from Observation \ref{obse:forbeckche}
where we proved that $g (\operatorname{i} \rightarrow \varphi
(\rho(z_0))) \leq (\perp \rightarrow c)$ for all $c \in A$. 

Hence we have that for all $j \in J$,\\ 
\quad$g \operatorname{inf}_{z \in P}\{\operatorname{eq}_K(\pi(z),k_0)
\rightarrow \varphi (\rho(z))\} \leq
g(\operatorname{eq}_K(\pi(z_0),k_0) \rightarrow \varphi (\rho(z_0)))
\leq \operatorname{eq}_I(\alpha(j),\beta(k_0)) \rightarrow
\varphi(j)$. 

\item Suppose that $ \emptyset = \pi^{-1}(k_0) \subseteq P$. In that
  case is clear that there is no pair $(j,k_0) \in J \times K$ such
  that $\alpha(j)=\beta(k_0)$ --this follows directly from the fact
  that the diagram of sets is a pullback. 
Hence, the inequality to be proved states that for all $j \in J$:  
\[g \operatorname{inf}_{z \in P}\{\perp \rightarrow \varphi
(\rho(z))\} \leq \big(\perp \rightarrow \varphi(j)\big).\] 
The validity of these type of inequalities is the content of
Observation \ref{obse:forbeckche}, (2).  
\end{itemize}
\end{enumerate}
\end{proof}

\item We want to prove the existence of a generic predicte.
\begin{defi} Let $A$ be a \koca\footnote{Observe that for this
    definition and for the theorem that follows, the
    $\operatorname{inf}$--completeness of $A$ is unnecessary.}. The
  maps of the form 
  $\mathcal{R}_A(\alpha): A^I \rightarrow 
  A^J$ for $\alpha:J \rightarrow I$ are called \emph{reindexing}
  maps. 

A pair $(T, \Sigma)$ with $T \in A^{\Sigma}$ is called a \emph{generic
  predicate} if for all pairs $(\varphi, I)$ with $I \subset A$ and
$\varphi \in A^I$, there is a morphism $\alpha: I \rightarrow \Sigma$ such that
$\alpha^*(T)=\varphi$. 
\end{defi}

\begin{theo} In the context of an $\operatorname{inf}$--complete
  $\mathcal{OCA}$, a generic predicate exists.  
\end{theo}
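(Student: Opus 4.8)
The plan is to produce a fully explicit generic predicate, taking $\Sigma$ to be the underlying set $A$ of the $\mathcal{OCA}$ itself and $T$ the identity. First I would set $\Sigma := A$ and $T := \operatorname{id}_A$, regarded as an element of $A^{\Sigma} = A^{A}$; this is legitimate precisely because $A$ is a set, so $A^{A}$ is an object of the fibre over $\Sigma$ in $\operatorname{Preord}^{\operatorname{Set}^{\text{op}}}$ and $T$ a bona fide element of it.

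Next I would check the defining property directly. Let $(\varphi, I)$ be given with $I \subseteq A$ and $\varphi \in A^I$. Since $I$ and $A$ are sets, the function $\varphi$ is itself a morphism $\alpha := \varphi : I \to A = \Sigma$ in $\operatorname{Set}$, so the reindexing map $\mathcal{R}_A(\alpha) = \alpha^*$ is defined. By Definition \ref{defi:regularfunctor} one has $\alpha^*(\psi) = \psi \circ \alpha$ for $\psi \in A^{\Sigma}$, hence $\alpha^*(T) = T \circ \alpha = \operatorname{id}_A \circ \varphi = \varphi$, which is exactly what genericity demands. This shows that $(\operatorname{id}_A, A)$ is a generic predicate, and, as the footnote anticipates, $\operatorname{inf}$-completeness of $A$ is not used anywhere: the argument works for any $\mathcal{OCA}$ equipped with the structure needed to form $\mathcal{R}_A$.

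I do not expect a genuine obstacle here; the only point worth a remark is the set-theoretic one, namely that $\Sigma$ must be an honest set so that $A^{\Sigma}$ is a legitimate fibre, which is automatic when $\Sigma = A$. If one prefers a generic predicate closer in form to the classical realizability-tripos construction, one may instead take $\Sigma := \mathcal{P}(A)$ with $T(X) := \operatorname{inf}(X)$ and, for a pair $(\varphi, I)$, set $\alpha(i) := \{\varphi(i)\}$, so that $\alpha^*(T)(i) = \operatorname{inf}\{\varphi(i)\} = \varphi(i)$; this variant does invoke $\operatorname{inf}$-completeness, but it is inessential for the theorem as stated, and I would present the identity construction as the main proof.
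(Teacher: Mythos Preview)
Your proposal is correct and is essentially the paper's own proof: take $\Sigma = A$, $T = \operatorname{id}_A \in A^A$, and for given $(\varphi, I)$ put $\alpha = \varphi$, so that $\alpha^*(T) = \operatorname{id}_A \circ \varphi = \varphi$. Your additional remarks about $\operatorname{inf}$-completeness being unnecessary and the alternative $\mathcal{P}(A)$-based construction are sound but go beyond what the paper records.
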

\begin{proof} Just take $\Sigma=A$ and $T \in A^A$ the identity map
  $T=\operatorname{id}_A:A \rightarrow A$. It is clear that if
  $\varphi:I \rightarrow A$, then $\varphi^*(T)=\operatorname{id}_A
  \circ \varphi=\varphi$. 
\end{proof}
\section{Internal realizability in $\koca$s}\label{section:nine}
\item
We have shown that the class of ordered combinatory algebras that,
besides a filter of distinguished truth values are equipped with an
implication, an adjunctor and satisfy a completeness condition with
respect to the infimum over arbitrary subsets -- i.e.: 
\koca s-- is rich enough as to allow the Tripos construction and as such
its objects can be taken as the basis of the categorical perspective
on classical realizability --\`a la Streicher--. In this section we
show that we can define realizability in this type of combinatory
algebras, and thus, to define realizability in high order arithmetic.  
%\begin{defi}
%A $\mathcal{^KOCA}$ is a sextuple $(A, \circ, \leq, k, s, e, \to,
%\operatorname{inf}, \Phi)$ where $(A, \circ, \leq, k, s)$ is a
%$\mathcal{OCA}$ with the following properties:
%\begin{itemize}
%%  \item has implication $\to$
%%  \item has the complete adjunction property with adjunctor $e$.
%%  \item is $\operatorname{inf}$-complete. 
%% \end{itemize}
%% \end{defi}
%We have seen that from a $\mathcal{^KOCA}$ with we can build a Tripos, which
%gives a Topos. In an incoming work, together with A. Miquel and
%J. Frey, we will see that we can build an $\mathcal{AKS}$
%from every $\mathcal{^KOCA}$. Moreover, the $\mathcal{AKS}$ we obtain
%determines an equivalent Tripos to the one obtained from the
%$\mathcal{^KOCA}$. Through the presentation of this topic, we used the
%combinatory algebra included in the $\mathcal{^KOCA}$, specially to
%prove that the associated Tripos to a $\mathcal{^KOCA}$ is a Tripos.  

%In the following paragraphs we will see that $\mathcal{^KOCA}$s
%sufficies to define realizability in high order arithmetics.   
\begin{defi}\label{defi:LomegaLang}
Consider a set of constants of kinds, one of its elements is denoted
by $o$. The language of kinds is given by the following grammar:
\[
\sigma, \tau :: = c\quad |\quad \sigma \to \tau
\]
Consider an infinite set of variables labelled by kinds
$x^\tau$. Suppose that 
we have infinitely many variables labelled of the kind $\tau$ for each
kind $\tau$. Consider also a set of constants $a^\tau, b^\sigma,
\dots$ labelled with a kind. The language $\mathcal{L}^\omega$ of
order $\omega$ is 
defined by the following grammar:
\[
M^\sigma, N^{\sigma\to\tau}, A^o, B^o :: = x^\sigma\quad |\quad
a^\sigma\quad |\quad (\lambda
x^\sigma.M^\tau)^{\sigma\to\tau}\quad |\quad (N^{\sigma\to\tau}
M^\sigma)^\tau\quad |\quad (A^o\Rightarrow B^o)^o\quad |\quad (\forall
x^\tau.A^o)^o
\] 
$o$ represents the type of truth values. The expressions labelled by
$o$ are called ``formul\ae". The symbols $\to$ and $\Rightarrow$, when
itereted, are associated on the right side. On the other hand, the
application, when iterated, are associated on the left side. 
\end{defi}
\begin{defi}\label{defi:LomegaTyp} Consider a $\mathcal{^KOCA}$ $A$
  and a set of variables $\mathcal{V}=\{x_1, x_2, \dots\}$. A
  declaration is a string of the shape $x_i:A^o$.  
  A context is a string of the shape $x_1:A_1^o, \dots,
x_k:A_k^o$, i.e.: contexts are finite sequences of declarations. The
contexts will be often denoted by capital greek letters: 
$\Delta, \Gamma, \Sigma$. A sequent is a string of the shape
$x_1:A_1^o, \dots, 
x_k:A_k^o\vdash p:B^o$ where $p$ is a polynomial of $A[x_1, \dots,
  x_k]$. The left side of a sequent is a context. When we do
not explicite the declarations of the context of a sequent, we 
will write it as~$\Gamma\vdash p:B^o$. Typing rules are trees of the shape  
\begin{prooftree}
\AxiomC{$S_1$}
\AxiomC{$\dots$}
\AxiomC{$S_h$}
\RightLabel{\scriptsize(Rule)}
\TrinaryInfC{$S_{h{+}1}$}
\end{prooftree}
where $h\geq 0$ and $S_1, \dots, S_{h{+}1}$ are sequents. The typing
rules for $\mathcal{L}^\omega$ are the following:
\begin{flushright}
  \begin{prooftree}
    \AxiomC{}
    \RightLabel{\scriptsize(ax)}
    \LeftLabel{\scriptsize{(where $x_i:A^o_i$ appears in $\Gamma$)}}
    \UnaryInfC{$\Gamma\vdash x_i:A^o_i$}
  \end{prooftree}
  \begin{prooftree}
    \AxiomC{$\Gamma, x:A^o\vdash p:B^o$}
    \RightLabel{\scriptsize{$(\to_i)$}}
    \UnaryInfC{$\Gamma\vdash e(\lambda^*x\ p):(A^o\Rightarrow B^o)^o$} 
  \end{prooftree}
  \begin{prooftree}
    \AxiomC{$\Gamma\vdash p:(A^o\Rightarrow B^o)^o$}
    \AxiomC{$\Gamma\vdash q:A^o$}
    \RightLabel{\scriptsize{$(\to_e)$}}
    \BinaryInfC{$\Gamma\vdash pq:B^o$}
  \end{prooftree}
  \begin{prooftree}
    \AxiomC{$\Gamma\vdash p:A^o$}
    \RightLabel{\scriptsize{$(\forall_i)$}}
    \LeftLabel{\scriptsize{(where $x^\sigma$ does not appears free in $\Gamma$)}}
    \UnaryInfC{$\Gamma\vdash p:(\forall x^\sigma A^o)^o$}
  \end{prooftree}
  \begin{prooftree}
    \AxiomC{$\Gamma\vdash p:(\forall x^\sigma A^o)^o$}
    \RightLabel{\scriptsize{$(\forall_e)$}}
    \UnaryInfC{$\Gamma\vdash p:(A^o\{x^\sigma:=M^\sigma\})$}
  \end{prooftree}
\end{flushright}
\end{defi} 
\begin{defi}\label{defi:LomegaSem} 
  Let us consider $\mathcal{A}=(A,\leq, \circ, \operatorname{s},
  \operatorname{k}, \to, \operatorname{e}, \Phi, 
  \operatorname{inf})$\footnote{At this point we must be more precise
    and distinguish notationally the $\mathcal{OCA}$ \ $\mathcal{A}$
    from its underlying set $A$.} a 
  complete $\mathcal{^KOCA}$. We define the interpretation of
  $\mathcal{L}^\omega$ as follows:
  \begin{enumerate}
    \item For \emph{kinds}: The interpretation of a constant $c$ is a set
      $\llbracket c \rrbracket$. In particular, the constant $o$ is
      interpreted as the underlying set of $\mathcal{A}$, i.e.:
      $\llbracket o\rrbracket = A$. Given two kinds $\sigma, \tau$,
      the interpretation $\llbracket \sigma \to \tau\rrbracket$ is the
      space of functions $\llbracket \tau\rrbracket^{\llbracket \sigma
      \rrbracket}$ 
    \item For \emph{expressions}: In order to interpret expressions,
      we start choosing an assignment $\mathfrak{a}$ for the
      variables 
        $x^\sigma$ such that $\mathfrak{a}(x^\sigma)\in\llbracket
        \sigma\rrbracket$. As it is usual in semantics, the
        substitution-like 
        notation $\{x^\sigma:=s\}$ affecting an assignment
        $\mathfrak{a}$ 
        %--or an
        %interpretation using $\mathfrak{a}$--, 
        modifies it by redefining 
        $\mathfrak{a}$ over $x^\sigma$ as the statement   
        $\mathfrak{a}\{x^\sigma:=s\}(x^\sigma):= s$. We proceed similarly for
        interpretations. 
      \begin{itemize}
      \item For an expression of the shape $x^\sigma$, its
        interpretation is $\llbracket x^\sigma\rrbracket =
        \mathfrak{a}(x^\sigma)$. 
      \item For an expression of the shape $\lambda x^\sigma M^\tau$,
        its interpretation is the function 
        $\llbracket \lambda x^\sigma M^\tau\rrbracket\in\llbracket
        \sigma \to \tau\rrbracket$ defined as $\llbracket
        \lambda x^\sigma M^\tau\rrbracket(s):= \llbracket
        M^\tau\rrbracket\{x^\sigma:=s\}$ for all $s\in\llbracket \sigma
        \rrbracket$. 
      \item For an expression of the shape $(N^{\sigma\to
        \tau}M^\sigma)^\tau$ its interpretation is $\llbracket
        (N^{\sigma\to
        \tau}M^\sigma)^\tau\rrbracket:=\llbracket N^{\sigma\to
        \tau}\rrbracket 
        \big(\llbracket M^\sigma \rrbracket\big) $
      \item For an expression of the shape $(A^o\Rightarrow B^o)^o$
        its interpretation is $\llbracket (A^o\Rightarrow
        B^o)^o\rrbracket:= \llbracket A^o\rrbracket \to \llbracket
        B^o\rrbracket$.   
      \item For an expression of the shape $(\forall x^\sigma A^o)^o$
        its interpretation is \[\llbracket (\forall x^\sigma
        A^o)^o\rrbracket:= \operatorname{inf}\big\{\llbracket
        A^o\rrbracket\{x^\sigma:=s\}\ \big|\ s\in\llbracket
        \sigma\rrbracket\big\}\]
      \end{itemize}
  \end{enumerate}
  We say that  
  $\mathcal{A}$ satisfies a sequent $x_1:A_1^o, \dots,
  x_k:A^k\vdash p:B^o$ if and only if for all assignment
  $\mathfrak{a}$ and for all~$b_1, \dots, b_k\in A$,
  if $b_1 \leq \llbracket
  A_1^o\rrbracket, \dots, b_k\leq \llbracket A^o_k\rrbracket$
  then $p\{x_1:=b_1, \dots, x_k:=b_k\}\leq \llbracket
  B^o\rrbracket$. In this case we write that:~$\mathcal{A}\models x_1{:}A_1^o, \dots,
  x_k{:}A^k\vdash p{:}B^o$.  

  A rule: \begin{prooftree}
\AxiomC{$S_1$}
\AxiomC{$\dots$}
\AxiomC{$S_h$}
\RightLabel{\scriptsize(Rule)}
\TrinaryInfC{$S_{h{+}1}$}
\end{prooftree}
is said to be \emph{adequate} if and only if for every $\mathcal{A}\in\mathcal{^KOCA}$, if
$\mathcal{A}\models S_1, \dots, S_h$ then $\mathcal{A}\models
S_{h{+}1}$.   
\end{defi} 
\begin{theo}\label{theo:LomegaAdeq}
  The rules of the typing system appearing in Definition \ref{defi:LomegaTyp}, are adequate.  
\end{theo}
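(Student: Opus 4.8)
The plan is to check adequacy of each of the five typing rules separately, working directly from the semantic definition of satisfaction in Definition \ref{defi:LomegaSem} and using the combinatory machinery built up in Sections \ref{section:five}--\ref{section:eight}. Throughout, the key point is that whenever we need to pass from a polynomial witnessing one judgement to a polynomial witnessing another, the coefficient-tracking clauses of Theorem \ref{theo:compoca} and Corollary \ref{coro:globaladj} guarantee that the new coefficients stay inside the filter $\Phi$, so the resulting polynomial is again a legitimate witness.

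First I would dispatch the axiom rule $(\mathrm{ax})$: if $x_i{:}A_i^o$ appears in $\Gamma$ and we are given $b_1\le\llbracket A_1^o\rrbracket,\dots,b_k\le\llbracket A_k^o\rrbracket$, then substituting into the polynomial $x_i$ gives $b_i\le\llbracket A_i^o\rrbracket=\llbracket B^o\rrbracket$, which is exactly what is required; no combinator is needed. Next, for $(\to_e)$, suppose $\Gamma\vdash p{:}(A^o\Rightarrow B^o)^o$ and $\Gamma\vdash q{:}A^o$ are satisfied. Given a substitution $b_1,\dots,b_k$ below the interpretations of the context, we get $p\{\vec b\}\le\llbracket A^o\rrbracket\to\llbracket B^o\rrbracket$ and $q\{\vec b\}\le\llbracket A^o\rrbracket$; by monotonicity of $\circ$ and then the half adjunction property (Definition \ref{defi:oca}, \eqref{item:halfadj}) applied with $a=p\{\vec b\}$, $b=\llbracket A^o\rrbracket$, $c=\llbracket B^o\rrbracket$, we obtain $(pq)\{\vec b\}=p\{\vec b\}\,q\{\vec b\}\le\llbracket B^o\rrbracket$. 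For $(\to_i)$, from satisfaction of $\Gamma,x{:}A^o\vdash p{:}B^o$ I would argue: fixing $b_1,\dots,b_k$ below the context interpretations, the map $a\mapsto p\{\vec b,x{:=}a\}$ is bounded above by $\llbracket B^o\rrbracket$ for every $a\le\llbracket A^o\rrbracket$; using $(\lambda^*x\,p)\{\vec b\}\circ a\le p\{\vec b, x{:=}a\}$ from Theorem \ref{theo:calculusinA} together with monotonicity and the adjunctor $\operatorname{e}$ (which satisfies $\operatorname{e}a'\le(b'\rightarrow c')$ whenever $a'b'\le c'$), one shows $\operatorname{e}(\lambda^*x\,p)\{\vec b\}\le\llbracket A^o\rrbracket\rightarrow\llbracket B^o\rrbracket=\llbracket(A^o\Rightarrow B^o)^o\rrbracket$; the coefficient condition in Theorem \ref{theo:calculusinA} keeps everything in $\Phi$.

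The two quantifier rules are handled via the defining property of $\operatorname{inf}$. For $(\forall_i)$, with $x^\sigma$ not free in $\Gamma$, satisfaction of $\Gamma\vdash p{:}A^o$ means $p\{\vec b\}\le\llbracket A^o\rrbracket\{x^\sigma{:=}s\}$ for every assignment $s\in\llbracket\sigma\rrbracket$ (here one uses that $p$ and the interpretations of $A_1,\dots,A_k$ do not depend on $x^\sigma$, so the same polynomial witness works uniformly in $s$); taking the infimum over $s$ gives $p\{\vec b\}\le\operatorname{inf}_s\llbracket A^o\rrbracket\{x^\sigma{:=}s\}=\llbracket(\forall x^\sigma A^o)^o\rrbracket$. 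For $(\forall_e)$, from $p\{\vec b\}\le\operatorname{inf}_s\llbracket A^o\rrbracket\{x^\sigma{:=}s\}$ we simply instantiate $s=\llbracket M^\sigma\rrbracket$ and use that the infimum is a lower bound to conclude $p\{\vec b\}\le\llbracket A^o\rrbracket\{x^\sigma{:=}\llbracket M^\sigma\rrbracket\}=\llbracket A^o\{x^\sigma{:=}M^\sigma\}\rrbracket$; the last equality is a routine substitution lemma for the interpretation that I would state and prove by induction on the formula $A^o$.

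The main obstacle, and the place where care is genuinely needed, is the $(\to_i)$ case: one must correctly thread the adjunctor $\operatorname{e}$ and the helper function(s) from Theorem \ref{theo:compoca} so that the exhibited element is literally a polynomial of $A[x_1,\dots,x_k]$ with all coefficients in $\Phi$ and with $\le$-behaviour matching the $\lambda^*$-reduction, since the rule's conclusion names the specific term $\operatorname{e}(\lambda^*x\,p)$. The quantifier cases, by contrast, are essentially immediate from the universal property of $\operatorname{inf}$ once the substitution lemma is in place; the only subtlety there is checking the freeness side condition on $x^\sigma$ so that a single polynomial witness is uniform across all instantiations.
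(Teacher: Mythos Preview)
Your proposal is correct and follows essentially the same route as the paper: case-split on the five rules, dispatch $(\mathrm{ax})$ trivially, use monotonicity together with the half adjunction for $(\to_e)$, use $\lambda^*$-reduction from Theorem~\ref{theo:calculusinA} followed by the adjunctor for $(\to_i)$, and use the universal property of $\operatorname{inf}$ for the two quantifier rules. Your only superfluous concern is the coefficient-tracking in $\Phi$: the satisfaction relation in Definition~\ref{defi:LomegaSem} asks only that the substituted polynomial be $\le$ the interpretation, not that its coefficients lie in the filter, so no appeal to Theorem~\ref{theo:compoca} or Corollary~\ref{coro:globaladj} is needed here; and your remark that the equality $\llbracket A^o\rrbracket\{x^\sigma{:=}\llbracket M^\sigma\rrbracket\}=\llbracket A^o\{x^\sigma{:=}M^\sigma\}\rrbracket$ deserves a substitution lemma is fair---the paper uses it without comment.
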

\begin{proof}
  For {\scriptsize{(ax)}} is evident. 

  For the implication rules:
  \begin{itemize}
    \item[{$(\to)_i$}] Assume $\mathcal{A}\models \Gamma,
      x:A^o\vdash p:B^o$ where $\Gamma=x_1:A_1^o, \dots,
      x_k:A_k^o$. Consider an assignment $\mathfrak{a}$ and~$b_1, \dots, b_k\in A$ such that $b_i\leq
      \llbracket A^o_i\rrbracket$. We get:
      \begin{center}
      \begin{tabular}{rcl}
      $(\lambda^*x p)\{x_1:=b_1,
      \dots, x_k:=b_k\}\llbracket
      A^o\rrbracket$&$=$&$(\lambda^*x p\{x_1:=b_1,
      \dots, x_k:=b_k\})\llbracket A^o\rrbracket\leq$\\
      &&$p\{x_1:=b_1,
      \dots, x_k:=b_k, x:=\llbracket A^o\rrbracket\}\leq$\\
      &&$\llbracket B^o\rrbracket$
      \end{tabular}
      \end{center}
      the last inequality by the assumption $\mathcal{A}\models
      \Gamma, x:A^o\vdash p:B^o$. 

      Applying the adjunction property we
      deduce that~$\operatorname{e}(\lambda^*x p)\{x_1:=b_1,
      \dots, x_k:=b_k\}\leq \llbracket (A^o\Rightarrow
      B^o)^o\rrbracket$. Since the above is valid for all the
      assignments, we conclude $\mathcal{A}\models \Gamma\vdash 
      \operatorname{e}(\lambda^*x\ p):(A^o\Rightarrow B^o)^o$.

      \item[{$(\to)_e$}] Assume $\mathcal{A}\models
        \Gamma\vdash 
      p:(A^o\Rightarrow B^o)^o$ and $\mathcal{A}\models
      \Gamma\vdash q: A^o$ where
      $\Gamma=x_1:A_1^o, \dots, x_k:A_k^o$. Consider an assignment
      $\mathfrak{a}$ and~$b_1, \dots,
      b_k\in A$ such that $b_i\leq 
      \llbracket A^o_i\rrbracket$. By hypothesis we get:
      \begin{center}
      \begin{tabular}{rcl}
        $p\{x_1:=b_1,
        \dots, x_k:=b_k\}$&$\leq$&$\llbracket A^o\rrbracket \to
        \llbracket B^o\rrbracket$\\
        &and&\\
        $q\{x_1:=b_1,
        \dots, x_k:=b_k\}$&$\leq$&$\llbracket A^o\rrbracket$\\
      \end{tabular}
      \end{center}
      and by monotonicity of the application in $\mathcal{A}$ we obtain:
        \[pq\{x_1:=b_1,
        \dots, x_k:=b_k\}\quad \leq\quad (\llbracket A^o\rrbracket \to
        \llbracket B^o\rrbracket)\quad \llbracket
        A^o\rrbracket\quad \leq\quad\llbracket B^o\rrbracket\] 
          Since the above is valid for all the assignments, we
          conclude that $\mathcal{A}\models \Gamma\vdash 
          pq:\llbracket B^o\rrbracket$.  
  \end{itemize}
  For the quantifiers:
  \begin{itemize}
    \item[{$(\forall)_i$}] Assume $\mathcal{A}\models \Gamma\vdash
      p:A^o$ and that $x^\sigma$ does not appear free in $\Gamma$, where
      $\Gamma=x_1:A_1^o, \dots, x_k:A^o_k$. Consider an
      assignment~$\mathfrak{a}$ and~$b_1, \dots, 
      b_k\in A$ such that $b_i\leq \llbracket A^o_i\rrbracket$.  

      Since $A_1^o, \dots, A_k^o$ does not depend upon $x^\sigma$, by
      the assumption $\mathcal{A}\models \Gamma\vdash
      p:A^o$, we get: 
      \begin{center}$p\{x_1:=b_1, \dots, x_k:=b_k\}\leq\llbracket
        A^o\rrbracket\{x^\sigma:=s\}$ for all $s\in\llbracket
        \sigma\rrbracket$
      \end{center} 
      Then $p\{x_1:=b_1, \dots, x_k:=b_k\}\leq\operatorname{inf}
      \{\llbracket A^o\rrbracket\{x^\sigma:=s\}\ |\ s\in\llbracket
      \sigma\rrbracket\}=\llbracket (\forall x^\sigma
      A^o)^o\rrbracket$. We conclude as before that
      $\mathcal{A}\models \Gamma\vdash 
      p:(\forall x^\sigma A^o)^o$.
      \item[$(\forall)_e$] Assume $\mathcal{A}\models \Gamma\vdash
      p:(\forall x^\sigma A^o)^o$, where $\Gamma=x_1:A_1^o, \dots,
      x_k:A^o_k$. Consider an assignment $\mathfrak{a}$ and~$b_1,
      \dots, b_k\in A$ such that $b_i\leq \llbracket
      A^o_i\rrbracket$. By the assumption $\mathcal{A}\models
      \Gamma\vdash p:(\forall x^\sigma A^o)^o$ we get: 
      \[p\{x_1:=b_1,
      \dots, x_k:=b_k\}\leq \llbracket A^o\rrbracket\{x^\sigma:=s\}
      \mbox{ for all }s\in\llbracket \sigma\rrbracket\] 
      Since 
      $\llbracket M^\sigma\rrbracket\in\llbracket \sigma\rrbracket$ we
      obtain:
      \[p\{x_1:=b_1, \dots, x_k:=b_k\}\leq \llbracket A^o\rrbracket
      \{x^\sigma:=\llbracket M^\sigma\rrbracket \}= 
      \llbracket A^o\{x^\sigma:=M^\sigma\}\rrbracket\]
      We conclude as before that 
      $\mathcal{A}\models \Gamma\vdash p:A^o\{x^\sigma:=M^\sigma\}$.  
  \end{itemize}
\end{proof}
The language of high order Peano Arithmetics
--$(\operatorname{PA})^\omega$--is an instance of  
$\mathcal{L}^\omega$ where we distinguish a constant of kind $I$ and two
constants of expression $0^I$ and $\operatorname{succ}^{I\to
  I}$. 
\begin{defi} For each kind $\sigma$ we define the Leibniz equality
  $=_\sigma$ as follows:
\[
x_1^\sigma =_\sigma x_2^\sigma :\equiv \forall y^{\sigma\to o} \Big
((y^{\sigma \to o} x_1^\sigma)^o \Rightarrow (y^{\sigma \to
  o}x_2^\sigma)^o\Big)^o  
\]
\end{defi}
The axioms of Peano Arithmetics are equalities over the kind $I$,
except for $\forall x^I ((\operatorname{succ}^{I\to I}x^I =_I
0^I)\Rightarrow \bot)^o$ --which we abbreviate $\forall x^I
(\operatorname{succ}^{I\to I}x^I\neq 0^I)^o$-- and for the induction principle. 

From the work of Krivine (c.f.: \cite{kn:kr2003}) we can conclude that all Peano Axioms
except the induction principle are realized in every \koca.
\begin{lema}
All equational axioms of Peano Arithmetics are realized in every
\koca. 
\end{lema}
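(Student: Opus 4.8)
The plan is to realize every equational axiom by a single, \koca-independent combinator, exploiting the reflexivity of Leibniz equality. First I would unfold what is to be realized: an equational axiom of Peano arithmetic is (the universal closure of) a formula $t_1 =_I t_2$ with $t_1,t_2$ of kind $I$, which by the definition of $=_\sigma$ is the closed formula $\forall\vec x^{\,I}\,\forall y^{I\to o}\big((y\,t_1)^o\Rightarrow(y\,t_2)^o\big)^o$. Fixing the intended interpretation of the arithmetic constants --- $\llbracket I\rrbracket=\nat$, $\llbracket 0^I\rrbracket=0$, $\llbracket\operatorname{succ}^{I\to I}\rrbracket$ the successor map, together with the standard interpretation of any further primitive‑recursive function symbol --- its interpretation in a \koca\ $\mathcal A$ is the element
\[
\operatorname{inf}\big\{\ s\big(\llbracket t_1\rrbracket\{\vec x:=\vec n\}\big)\to s\big(\llbracket t_2\rrbracket\{\vec x:=\vec n\}\big)\ \ \big|\ \ \vec n\in\nat^{|\vec x|},\ \ s\in A^{\nat}=\llbracket I\to o\rrbracket\ \big\}.
\]

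The one genuinely semantic ingredient I would isolate is that each such equation is \emph{true} in the standard numeral model: for every axiom and every tuple $\vec n$, one has $\llbracket t_1\rrbracket\{\vec x:=\vec n\}=\llbracket t_2\rrbracket\{\vec x:=\vec n\}$ as elements of $\nat$. This is a routine induction on the shape of the axiom --- the defining equations simply compute the corresponding functions on $\nat$ --- and it is exactly the place where the first‑order analysis of Krivine, \cite{kn:kr2003}, is being invoked; crucially it says nothing about the algebra $\mathcal A$.

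The combinatory core is then short. Put $\iota_0:=\operatorname{e}\,\operatorname{i}$ with $\operatorname{i}=\operatorname{skk}$; then $\iota_0\in\Phi$, since $\operatorname{k},\operatorname{s}\in\Phi$, $\Phi$ is closed under application (so $\operatorname{i}\in\Phi$), and $\operatorname{e}\in\Phi$ by the definition of a filter. By Lemma \ref{lema:ocaoperations} we have $\operatorname{i}a\leq a$ for all $a\in A$, hence the defining property of the adjunctor gives $\iota_0\leq(a\to a)$ for \emph{every} $a\in A$. Combining this with the previous observation: for each axiom, each $\vec n$ and each $s$, the two sides of the arrow above are the \emph{same} element $a=s(\llbracket t_1\rrbracket\{\vec x:=\vec n\})$, so that arrow equals $a\to a\geq\iota_0$; taking $\operatorname{inf}$ over $\vec n$ and over $s$ --- using monotonicity of $\operatorname{inf}$ and the $\operatorname{inf}$‑completeness of $\mathcal A$ --- we obtain $\iota_0\leq\llbracket\forall\vec x\,\forall y((y\,t_1)\Rightarrow(y\,t_2))\rrbracket$. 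Since $\iota_0\in\Phi$, this exhibits $\iota_0$ as a realizer of the axiom in the sense of Definition \ref{defi:LomegaSem}; and as the argument used nothing about $\mathcal A$ beyond the adjunctor $\operatorname{e}$, the filter $\Phi$ and the operator $\operatorname{inf}$, it works in every \koca.

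I expect the only delicate point --- the ``main obstacle'' --- to be the middle step: pinning down precisely which of Peano's axioms are to be counted as equational, and verifying that each of them is valid in the intended numeral interpretation, uniformly and independently of $\mathcal A$. (Any ``equality axioms'' that are in fact implications between equations --- symmetry and transitivity of $=_I$, and compatibility of $=_I$ with successor --- are theorems of the pure logic and are therefore realized via the adequacy Theorem \ref{theo:LomegaAdeq}, so they require nothing beyond what is already available.) Once the semantic validity is granted, the realizer is the single element $\iota_0=\operatorname{e}\,\operatorname{i}$, and everything else is just monotonicity of $\operatorname{inf}$ and the one‑line inequality $\operatorname{e}\,\operatorname{i}\leq(a\to a)$.
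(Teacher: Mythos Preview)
For the genuinely equational axioms your argument is correct and is essentially the paper's argument: both rely on the fact that a true numerical equation makes the Leibniz formula collapse to an infimum of instances $a\to a$, which is then realized by the identity. The paper simply writes ``the identity term $\lambda^*x\,x$ suffices as a realizer''; via the typing rule $(\to_i)$ this is $\operatorname{e}(\lambda^*x\,x)=\operatorname{e}\operatorname{i}$, which is exactly your $\iota_0$. Your extra care in spelling out the semantic step (truth in $\nat$), the use of the adjunctor, and the passage to the infimum is welcome but does not change the idea.

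There is, however, a genuine omission. In the paper ``equational axioms'' is used loosely for \emph{all} Peano axioms except induction, and the proof explicitly treats the non-equation
\[
\forall x^{I}\big((\operatorname{succ}^{I\to I}x^{I}=_I 0^{I})\Rightarrow\bot\big)^{o},
\]
exhibiting $\lambda^*x.\,x\operatorname{s}$ as a realizer. Your proposal does not cover this axiom: it is not of the form $t_1=_I t_2$, it is not an implication \emph{between} equations (so your parenthetical about symmetry, transitivity and congruence does not apply), and it is certainly not a theorem of the pure logic derivable via Theorem~\ref{theo:LomegaAdeq}. The realizer the paper gives works because from any $q\leq\llbracket\operatorname{succ}(n)=_I 0\rrbracket$ one may instantiate the Leibniz predicate at a suitable $s\in A^{\nat}$ and extract an element below $\perp$; the term $\lambda^*x.\,x\operatorname{s}$ performs exactly this instantiation. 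You should add this case (or make explicit that you are reading ``equational'' strictly and deliberately excluding $\operatorname{succ}(x)\neq 0$, which is not the paper's reading).
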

\begin{proof}
 For the axioms which are 
equalities, the identity term $\lambda^*x x$ suffices as a realizer. For the axiom $\forall x^I
(\operatorname{succ}^{I\to I}x^I\neq 0^I)$, the term $\lambda^* x\ x\operatorname{s}$
is a realizer. 
\end{proof}
\begin{defi}
The formula $\mathds{N}(z^I)$ is defined as: 
\[
\forall x^{I \to o}(\forall y^I ((x^{I\to o}y^I)^o\Rightarrow (x^{I\to
  o}(\operatorname{succ}^{I\to I}y^I))^o\Rightarrow ((x^{I\to o}0^I)^o
\Rightarrow (x^{I\to o}z^I)^o)^o
\]
\end{defi} 
The meaning of this definition is that $\mathds{N}(z^I)$ is satisfied in the sort
$I$ by the individuals $z^I$ which are in all the inductive
sets. 

\end{list}

\end{document}